\newcommand{\bigdsum}{\bigoplus}
\newcommand{\card}[1]{\lvert #1 \rvert}
\let\chisave\chi
\renewcommand{\chi}{{%
 \mathchoice{\raisebox{0.25ex}{$\displaystyle\chisave$}}
            {\raisebox{0.2ex}{$\textstyle\chisave$}}
            {\raisebox{0.2ex}{$\scriptstyle\chisave$}}
            {\raisebox{0.1ex}{$\scriptscriptstyle\chisave$}}}}
\newcommand{\complex}{\ensuremath \mathbb{C}}
\newcommand{\cross}{\times}
\newcommand{\curly}[1]{\ensuremath{\mathcal{#1}}}
\newcommand{\disjointunion}{\sqcup}
\newcommand{\defeq}{\stackrel{\scriptscriptstyle{\mathrm{def}}}{=}}
\newcommand{\dual}[1]{\ensuremath {#1}^{*}}
\renewcommand{\epsilon}{\varepsilon}
\newcommand{\inj}{\hookrightarrow}
\newcommand{\integ}{\ensuremath{\mathbb{Z}}}
\newcommand{\intersection}{\mathrel{\cap}}
\newcommand{\iso}{\ensuremath \cong}
\newcommand{\KqGr}[2]{\mathbb{K}_{q}[\mathrm{Gr}(#1,#2)]}
\newcommand{\KqMat}[2]{\mathbb{K}_{q}[\mathrm{M}(#1,#2)]}
\newcommand{\minor}[2]{\genfrac{[}{]}{0em}{1}{#2}{#1}}
\newcommand{\nat}{\ensuremath \mathbb{N}}
\renewcommand{\phi}{\varphi}
\newcommand{\transpose}[1]{{#1}^{T}}
\newcommand{\union}{\mathrel{\cup}}
\newcommand{\ie}{i.e.\ }
\newcommand{\eg}{e.g.\ }
\theoremstyle{plain}
\newtheorem{theorem}{Theorem}[section]
\newtheorem*{theorem*}{Theorem}
\newtheorem{proposition}[theorem]{Proposition}
\newtheorem{lemma}[theorem]{Lemma}
\newtheorem{corollary}[theorem]{Corollary}
\theoremstyle{definition}
\newtheorem{definition}[theorem]{Definition}
\theoremstyle{remark}
\newtheorem{remark}[theorem]{Remark}
\newtheorem*{example*}{Example}
\newtheorem*{examplectd*}{Example (continued)}
\title{Graded quantum cluster algebras \\ and an application to quantum Grassmannians}
\author{Jan E. Grabowski\footnotemark[2] 
\\ \small{\textit{Department of Mathematics and Statistics, Lancaster University,}}
\\ \small{\textit{Lancaster, LA1 4YF, United Kingdom}}
\and St\'{e}phane Launois\footnotemark[3]
\\ \small{\textit{School of Mathematics, Statistics and Actuarial Science,
University of Kent,}}
\\ \small{\textit{Canterbury, CT2 7NF, United Kingdom}}
}
\date{10th January 2013}
\begin{document}

\maketitle

\renewcommand{\thefootnote}{\fnsymbol{footnote}}
\footnotetext[2]{Email: \url{j.grabowski@lancaster.ac.uk}.  Website: \url{http://www.maths.lancs.ac.uk/~grabowsj/}}
\footnotetext[3]{Email: \url{S.Launois@kent.ac.uk}.  Website: \url{http://www.kent.ac.uk/smsas/personal/sl261/}}
\renewcommand{\thefootnote}{\arabic{footnote}}
\setcounter{footnote}{0}

\begin{abstract}
We introduce a framework for $\integ$-gradings on cluster algebras (and their quantum analogues) that are compatible with mutation.  To do this, one chooses the degrees of the (quantum) cluster variables in an initial seed subject to a compatibility with the initial exchange matrix, and then one extends this to all cluster variables by mutation.  The resulting grading has the property that every (quantum) cluster variable is homogeneous.

In the quantum setting, we use this grading framework to give a construction that behaves somewhat like twisting, in that it produces a new quantum cluster algebra with the same cluster combinatorics but with different quasi-commutation relations between the cluster variables.

We apply these results to show that the quantum Grassmannians $\KqGr{k}{n}$ admit quantum cluster algebra structures, as quantizations of the cluster algebra structures on the classical Grassmannian coordinate ring found by Scott.   This is done by lifting the quantum cluster algebra structure on quantum matrices due to Gei\ss--Leclerc--Schr\"{o}er and completes earlier work of the authors on the finite-type cases.  
\end{abstract}


\vfill

\pagebreak

\section{Introduction}

Since cluster algebras were introduced by Fomin and Zelevinsky (\cite{FZ-CA1}), it has been recognised that cluster algebra structures on homogeneous coordinate rings on Grassmannians are among the most important classes of examples.  The demonstration of the existence of such a structure is due to Scott \cite{Scott-Grassmannians} and one reason for the importance of this is that these cluster structures are typically of infinite mutation type but have combinatorics under tight control, because they are a realisation of certain aspects of Grassmannian combinatorics.  We note for example that Fomin and Pylyavskyy (\cite{FominPylyavskyy}) have recently advocated further study of Grassmannian cluster structures for precisely these reasons.

Among those who study quantized coordinate rings, it is also widely acknowledged that Grassmannians have a special place.  Again, the intricate geometric structures associated to Grassmannians, due in part to their Lie-theoretic origins, give a rich structure of their quantized coordinate rings, the quantum Grassmannians $\KqGr{k}{n}$.  Linking these two points of view, it has long been expected that quantum Grassmannians should possess quantum cluster algebra structures, the definition of the latter being due to Berenstein and Zelevinsky (\cite{BZ-QCA}).  In earlier work, the present authors showed that this is the case when the cluster structure was expected to be of finite type, namely the cases $\KqGr{2}{n}$ and $\KqGr{3}{n}$ for $n=6,7,8$.  However a general proof was not given at that time: one aim of this paper was to give such a general proof and this is achieved in Theorem~\ref{t:quotisotoGr}.

In the course of attempting to generalise our earlier work on quantum cluster algebra structures, it became apparent that new techniques would be required to handle the general case.  The main tools needed were ways to transfer quantum cluster algebra structures between related algebras.  In the commutative setting, many of these operations are easy to carry out but the noncommutative situation is considerably more delicate, as one must be sure not to destroy the property of quantum clusters consisting of pairwise quasi-commuting elements.  (That is, variables in the same quantum cluster should commute up to a power of the deformation parameter $q$.)

Examination of these problems showed that the correct way to keep control of this problem in the quantum setting is, as often in quantum groups, to introduce gradings.  While the definition and constructions here were originally formed with a view to solving the quantum Grassmannian problem, we believe that the framework we introduce here should be of significance to researchers interested in cluster algebras more generally.  In particular, this framework applies to classical commutative cluster algebras as well as their quantum analogues and yields in a natural way statements about the homogeneity of (quantum) cluster variables in a variety of settings.  We remark that these results sit among a surprisingly small number that deal with cluster algebras and properties of their cluster variables in infinite types alongside finite types.  The definition of a graded cluster algebra also sits separately from the categorical setting (i.e.\ cluster categories and related constructions), though---as here---it is fully compatible with categorification and indeed the two can illuminate each other.

We therefore devote the first two main sections of this work to graded (quantum) cluster algebras, following some essential recollections.  The first gives the definition of a graded cluster algebra: it is obtained from an initial seed by adding an additional piece of data, an assignment of integer degrees to each initial cluster variable, subject to a compatibility condition with the initial exchange matrix.  (This compatibility is only with the exchange matrix, which is why the notion immediately extends to the quantum setting.)  This initial data is propagated to the whole cluster algebra by mutation, the key point being that we can mutate the grading data in a natural way.  An immediate consequence of the definition is that every (quantum) cluster variable is homogeneous for the resulting algebra grading.  We note that Berenstein and Zelevinsky have introduced a similar notion of grading in \cite{BZ-QCA}.  However this was not an algebra grading but only a module grading.  Their definition helped inspire ours but the two are different.

We then give a number of constructions that use the grading to alter a given quantum cluster algebra structure; some of these constructions are trivial for commutative cluster algebras.  For example, one may re-scale every initial quantum cluster variable for a graded quantum cluster algebra by $q^{1/2}$ to the power of its degree and obtain an isomorphic quantum cluster algebra.  We also show how to naturally extend a quantum cluster algebra to a skew-Laurent extension of that algebra.  Furthermore we can combine these ideas to ``re-scale'' a quantum cluster algebra structure using a skew-Laurent extension, namely Theorem~\ref{t:rescaledQCA}.  This theorem is a twisting-like result, in that the new quantum cluster algebra structure so obtained has the same cluster combinatorics but different quasi-commutation relations.  The existence of gradings is key for that result and in turn Theorem~\ref{t:rescaledQCA} is key to the application we describe below, to the quantum Grassmannian.

The problem of lifting the cluster algebra structure on the coordinate ring of the big cell of a partial flag variety has been solved by Gei\ss, Leclerc and Schr\"{o}er in \cite[\S 10]{GLS-PFV}.  The approach is straightforward: one can view the coordinate ring of the cell as the quotient of the coordinate ring of the partial flag variety by certain elements (minors), and this quotient allows one to lift certain distinguished elements from the quotient to homogeneous elements in a minimal (and hence unique) way.  Then \cite[Proposition~10.1]{GLS-PFV} shows that every cluster variable has the required property and hence the lifting procedure for the whole cluster algebra structure is possible.  Recalling that the cluster algebra structure on the coordinate ring of the cell is obtained categorically, one notes that the appropriate data for the lifting is also encoded categorically, giving rise to a hope that this may be used in the quantum setting also.

However it is not possible to follow the strategy of \cite{GLS-PFV} directly in the quantum setting.  It is well-known that in the noncommutative setting, factors of rings by normal but not central elements can be ``too small'' and so the appropriate construction is localisation, in the form of noncommutative dehomogenisation (see \cite[\S 3]{KellyLenaganRigal}, for example).  Therefore our methods are necessarily different to, and indeed more technically complicated than, the approach of \cite{GLS-PFV}.

This necessitated the introduction of the graded methods described above.  As an application of these, we are able to give a noncommutative version of the lifting of Gei\ss--Leclerc--Schr\"{o}er and prove that the quantum Grassmannians $\KqGr{k}{n}$ admit graded quantum cluster algebra structures.  The main prior results used are the existence of a quantum cluster algebra structure on $\KqMat{k}{n-k}$, as shown by Gei\ss--Leclerc--Schr\"{o}er (\cite{GLS-QuantumPFV}), and the dehomogenisation isomorphism, due originally to Kelly--Lenagan--Rigal (\cite{KellyLenaganRigal}), though we use a version given by Lenagan--Russell (\cite{LenaganRussell}).  The dehomogenisation isomorphism makes the key link between quantum matrices and the quantum Grassmannian and is the noncommutative expression of the former as the quantum coordinate ring of the big cell of the latter.  That is, it is an isomorphism 
\[ \alpha\colon \KqMat{k}{n-k}[Y^{\pm 1}; \sigma] \to \KqGr{k}{n}\!\left[ [12\cdots k]^{-1} \right] \]
between a certain skew-Laurent extension of the quantum matrices and a certain localisation of the quantum Grassmannian.  The main goal is to transfer a quantum cluster algebra structure through this isomorphism $\alpha$ and show that it can be lifted from the localisation to the quantum Grassmannian $\KqGr{k}{n}$ itself.

\vfill
\pagebreak
As this construction is rather technical, for the benefit of the reader we give a detailed breakdown of the structure of the proof, as follows:
\renewcommand{\labelenumi}{\textbf{(\Alph*)}}
\renewcommand{\labelenumii}{\textbf{(\Alph{enumi}\arabic*)}}
\renewcommand{\labelenumiii}{\textbf{(\Alph{enumi}\arabic{enumii}\alph*)}}
\begin{enumerate}
\item Analysis of the quantum cluster algebra structure on $\KqMat{k}{n-k}$ given by \cite{GLS-QuantumPFV}. \hfill (\S \ref{s:KqMatisQCA})
\begin{enumerate}
\item Proof of the existence of a \emph{graded} quantum cluster algebra structure on $\KqMat{k}{n-k}$. \rule{0em}{1em} \hfill (Lemma~\ref{lemma:degsumsequal})
\item Observation of the existence of an ``almost-grading'' $\theta$, arising from the categorification of $\KqMat{k}{n-k}$. \hfill (p.\ \pageref{page:theta-start}--\pageref{page:theta-end})
\end{enumerate}
\item Identification of the image of the quantum cluster algebra structure on a skew-Laurent extension of $\KqMat{k}{n-k}$ under the dehomogenisation isomorphism $\alpha$ described above, giving a quantum cluster algebra structure on a localisation of $\KqGr{k}{n}$. \hfill (\S \ref{s:dehomog})
\begin{enumerate} 
\item\label{B1} Identification of the images of quantum cluster variables. \hfill (Theorem~\ref{t:thetaisscalingpower})
\item\label{B2} In particular, identification of the images of quantum minors. \hfill (Corollary~\ref{c:PluckersAreQCVs})
\end{enumerate}
\item ``Re-homogenisation'' by transferring of the quantum cluster algebra structure from the localisation to the unlocalised algebra. \hfill (\S \ref{s:QCAonKqGr})
\begin{enumerate}
\item Alteration of the quantum cluster algebra structure on the localisation of $\KqGr{k}{n}$ such that the ``almost-grading'' $\theta$ becomes an honest grading. \hfill (p.\ \pageref{page:fix-theta-start}--\pageref{page:fix-theta-end})
\item Use of Theorem~\ref{t:rescaledQCA} to produce a new quantum cluster algebra with the same cluster combinatorics but whose quasi-commuting relations now match those of the quantum Grassmannian.  The quantum cluster variables of the new algebra are shown to be products of elements of $\KqGr{k}{n}$ multiplied by a power of a certain central element (that power being controlled by $\theta$). \hfill (Proposition~\ref{p:rescaledLoctilde}, Lemma~\ref{l:centralelt})
\item The quotient that sets the above central element to 1 inherits a quantum cluster algebra structure (with the same cluster combinatorics). \hfill (Corollary~\ref{c:quotisQCA})
\item Demonstration that this quotient is isomorphic to $\KqGr{k}{n}$. \hfill (Theorem~\ref{t:quotisotoGr})
\begin{enumerate}
\item \textbf{(B1)}, \textbf{(B2)} and \textbf{(C2)} above collectively imply that there exists a surjective homo\-morphism. 
\item The equalities of the Gel\cprime fand--Kirillov dimensions of the two algebras shows that this epimorphism is an isomorphism.
\end{enumerate}
\item Finally, the powers of $q$ appearing in the expressions for the quantum cluster variables can be removed, as another consequence of the grading. \hfill (after Theorem~\ref{t:quotisotoGr})
\end{enumerate}
\end{enumerate}

This construction does indeed yield the same results as the authors' earlier work (\cite{Gr2nSchubertQCA}) but now in arbitrary---and in particular far from finite---type, in a universal way.  Other consequences of the approach taken here include the fact that every quantum cluster variable for this structure on $\KqGr{k}{n}$ is homogeneous for the standard grading, with the quantum Pl\"{u}cker coordinates in degree one.  It is hard to see how this property could be established in infinite type without the framework presented here.  We note that we do not deduce this from any explicit formul\ae; indeed, we have no such formul\ae, though it would be interesting to understand the forms of the quantum cluster variables in tame types ($\KqGr{3}{9}$ and $\KqGr{4}{8}$), now that the existence of the quantum cluster algebra structure is proved.

We also note that the above proof does not in fact rely on any particularly special properties of the quantum Grassmannian.  Indeed, many of the steps outlined above have analogues for quantum coordinate rings of big cells of partial flag varieties in full generality; by the latter, we mean the algebras $A_{q}(\mathfrak{n}(w_{0}w_{0}^{K}))$ discussed in \cite[\S 12.5]{GLS-QuantumPFV}.  In particular, Corollary~12.12 of that paper gives us a quantum cluster algebra structure to take as ``input'' to the process described above.  Similarly, corresponding dehomogenisation isomorphisms in this more general case are known (\cite{Yakimov}, \cite{Yakimov-2}).  

However some adjustments may be needed in the general case.  From the quantum Grassmannian $\KqGr{k}{n}$ we localise by the Ore set $\{ [1\dotsm k]^{n} \mid n\in \nat \}$, which naturally gives the localisation a $\integ$-grading.  Then we obtain $\KqMat{k}{n-k}$ as the degree zero part of this.  In general, we will need to localise by powers of several elements, leading to multi-gradings on the localisation.  Hence we will need to work with multi-gradings on the quantum cluster algebras $A_{q}(\mathfrak{n}(w_{0}w_{0}^{K}))$, to lift that structure to the quantized coordinate ring of the partial flag variety itself.  We intend to return to this topic in more detail in future work.

\subsection*{Acknowledgements} The second author is grateful for the financial support of EPSRC First Grant \emph{EP/I018549/1}.

\section{Preliminaries}

\subsection{Quantum matrices and quantum Grassmannians}\label{ss:qGrassmannians}

Throughout, $\mathbb{C}$ denotes the field of complex numbers and $\mathbb{K}$ denotes the field $\mathbb{Q}(q^{1/2})$. Then in particular, the indeterminate $q$ is not a root of unity and has a square root in $\mathbb{K}$. Let $C$ be an $l\cross l$ generalized Cartan matrix with columns indexed by a set $I$.  Let $(H,\Pi,\Pi^{\vee})$ be a minimal realization of $C$, where $H \iso \complex^{2\left| I \right|-\text{rank}(C)}$, $\Pi=\{ \alpha_{i} \mid i \in I \} \subset \dual{H}$ (the simple roots) and $\Pi^{\vee}=\{ h_{i} \mid i \in I \} \subset H$ (the simple coroots).  Then we say $\curly{C}=(C,I,H,\Pi,\Pi^{\vee})$ is a root datum associated to $C$.  (Lusztig \cite{LusztigBook} has a more general definition of a root datum but this one will suffice for our purposes.)

If $G=G(\curly{C})$ is a connected semisimple complex algebraic group associated to $\curly{C}$, $G$ has a (standard) parabolic subgroup $P_{J}$ associated to any choice of subset $J\subseteq I$.  From this we can form $G/P_{J}$, a partial flag variety; the choice $J=\emptyset$ gives $G/P_{\emptyset}=G/B$, the full flag variety.  We set $D=I\setminus J$.

The partial flag variety $G/P_{J}$ is a projective variety, via the well-known Pl\"{u}cker embedding $G/P_{J} \inj \prod_{d\in D} \mathbb{P}(L(\omega_{d}))$.  (Here, $L(\lambda)$ is the irreducible $G$-module corresponding to a dominant integral weight $\lambda$ and $\{\omega_{i}\}_{i\in I}$ are the fundamental weights.)  Via the Pl\"{u}cker embedding, we may form the corresponding $\nat^{D}$-graded multi-homogeneous coordinate algebra $\complex[G/P_{J}]=\bigoplus_{\lambda \in \nat^{D}} \dual{L(\lambda)}$.  The case we consider is that of the partial flag variety obtained from $G=G(A_{n})=SL_{n+1}(\complex)$ and $J=I \setminus \{k\}$, namely $G/P_{J}=\mathrm{Gr}(k,n)$, the Grassmannian of $k$-dimensional subspaces in $\complex^{n}$.   

The coordinate ring $\complex[G]$ has a quantum analogue, $\mathbb{K}_{q}[G]$ (see for example \cite{Brown-Goodearl}, where this algebra is denoted $\curly{O}_{q}(G)$).  Via this quantized coordinate ring, we can define a quantization $\mathbb{K}_{q}[G/P_{J}]$ of $\mathbb{K}[G/P_{J}]$.  

We recall that the quantum matrix algebra $\KqMat{k}{n}$ is the $\mathbb{K}$-algebra generated by the set $\{ X_{ij} \mid 1\leq i\leq k,\ 1\leq j \leq n \}$ subject to the quantum $2\cross 2$ matrix relations on each $2\cross 2$ submatrix of \[ \begin{pmatrix} X_{11} & X_{12} & \cdots & X_{1n} \\ \vdots & \vdots & \ddots & \vdots \\ X_{k1} & X_{k2} & \cdots & X_{kn} \end{pmatrix}. \] The quantum $2\cross 2$ matrix relations on $\left( \begin{smallmatrix} a & b \\ c & d \end{smallmatrix} \right)$ are
\begin{align*} ab & = qba & ac & = qca & bc & = cb \\ bd & = qdb & cd & = qdc & ad-da & = (q-q^{-1})bc. \end{align*}
 
Recall that the $k\cross k$ quantum minor $\Delta_{q}^{I}$ associated to the $k$-subset $I=\{ i_{1} < i_{2} < \cdots < i_{k} \}$ of $\{1, \dots , n\}$ is defined to be
\[ \Delta_{q}^{I} \defeq \sum_{\sigma \in S_{k}} (-q)^{l(\sigma)}X_{1i_{\sigma(1)}}\cdots X_{k\mspace{0.5mu}i_{\sigma(k)}} \] where $S_{k}$ is the symmetric group of degree $k$ and $l$ is the usual length function on this.  This defines the quantum minor $\Delta_{\{1,\ldots,k\}}^{I}$ but quantum minors of smaller degree or for different choices of the row set are defined analogously in the obvious way.  

Our notation for a quantum minor with row set $I$ and column set $J$ will be $\minor{I}{J}$; note that we suppress $q$, as all our minors will be quantum minors unless otherwise stated.  For example, when $k=2$, we will write the quantum minor $\Delta_{q}^{ij}$ for $i<j$ as $\minor{12}{ij}$; written in terms of the generators of $\KqMat{2}{2}$ this is equal to $X_{1i}X_{2j}-qX_{1j}X_{2i}$.  Similarly, we will often denote the generator $X_{ij}$ of $\KqMat{k}{n}$ by $(ij)$.

Then we denote by $\curly{P}_{q}$ the set of all quantum Pl\"{u}cker coordinates, that is $$\curly{P}_{q} = \{ \Delta_{q}^{I}  \mid I \subseteq \{1,\ldots ,n\}, \card{I}=k \}.$$  

\begin{definition}\label{def:KqGrkn} The quantum Grassmannian $\KqGr{k}{n}$ is the subalgebra of the quantum matrix algebra $\KqMat{k}{n}$ generated by the quantum Pl\"{u}cker coordinates $\curly{P}_{q}$.
\end{definition}

When working with minors in the quantum Grassmannian $\KqGr{k}{n}$, where the row set is necessarily $\{ 1,\ldots ,k\}$, we will write $[I]$ for $\Delta_{q}^{I}$, \eg $[ij]$ for $\Delta_{q}^{ij}$ as above.  

It is well-known that $\KqGr{k}{n}$ is a Noetherian domain with Gel\cprime fand--Kirillov dimension $k(n-k)+1$.  (Further ring-theoretic properties of quantum Grassmannians are established in \cite{Fioresi}, \cite{KellyLenaganRigal}, \cite{LaunoisLenaganRigal} and \cite{LenaganRigal}.)

\subsection{Cluster algebras}

The construction of a cluster algebra of geometric type from an initial seed $(\underline{x},B)$, due to Fomin and Zelevinsky (\cite{FZ-CA1}), is now well-known.  Here $\underline{x}$ is a transcendence base for a certain field of fractions of a polynomial ring and $B$ is a skew-symmetric integer matrix; often $B$ is replaced by the quiver $Q=Q(B)$ it defines in the natural way.  We refer the reader who is unfamiliar with this construction to the survey of Keller (\cite{Keller-CASurvey}) and the recent book of Gekhtman, Shapiro and Vainshtein (\cite{GSV-Book}) for an introduction to the topic and summaries of the main related results in this area.

\subsection{Quantum cluster algebras}

Berenstein and Zelevinsky (\cite{BZ-QCA}) have given a definition of a quantum cluster algebra.  These algebras are non-commutative but not so far from being commutative.  A quantum seed $(\underline{x},B,L)$ consists of $\underline{x}=(X_{1},\dotsc ,X_{r})$, simultaneously a transcendence base and generating set for the skew-field of fractions $\curly{F}$ of a quantum torus (over the field $\mathbb{K}$), a skew-symmetric integer matrix $B$ (the exchange matrix) and a second skew-symmetric integer matrix $L=(l_{ij})$ that determines the aforementioned quantum torus.  That is, the matrix $L$ describes quasi-commutation relations between the variables in the cluster, where quasi-commuting means the existence of a relation of the form $X_{i}X_{j}=q^{l_{ij}}X_{j}X_{i}$.

There is also a mutation rule for these quasi-commutation matrices as well as a modified exchange relation that involves further coefficients that are powers of $q$ derived from $B$ and $L$, which we describe now.  For $k$ a mutable index, set
\begin{align*}
\underline{b}_{k}^{+} & = -\underline{\boldsymbol{e}}_{k}+\sum_{b_{ik}>0}b_{ik}\underline{\boldsymbol{e}}_{i} \qquad \text{and} \\
\underline{b}_{k}^{-} & = -\underline{\boldsymbol{e}}_{k}-\sum_{b_{ik}<0}b_{ik}\underline{\boldsymbol{e}}_{i}
\end{align*}
where the vector $\underline{\boldsymbol{e}}_{i}\in \complex^{r}$ ($r$ being the number of rows of $B$) is the $i$th standard basis vector.  Note that the $k$th row of $B$ may be recovered as $B_{k}=\underline{b}_{k}^{+}-\underline{b}_{k}^{-}$.

Then given a quantum cluster $\underline{x}=(X_{1},\ldots,X_{r})$, exchange matrix $B$ and quasi-commutation matrix $L$, the exchange relation for mutation in the direction $k$ is given by
\[ X_{k}^{\prime}=M(\underline{b}_{k}^{+})+M(\underline{b}_{k}^{-}) \]
with
\[ M(a_{1},\dotsc ,a_{r}) \defeq q^{\frac{1}{2}\sum_{i<j} a_{i}a_{j}l_{ji}}X_{1}^{a_{1}}\dotsm X_{r}^{a_{r}}. \]
By construction, the integers $a_{i}$ are all non-negative except for $a_{k}=-1$.  The monomial $M$ (as we have defined it here) is related to the concept of a toric frame, also introduced in \cite{BZ-QCA}.  The latter is a technical device used to make the general definition of a quantum cluster algebra.  For our examples, where we start with a known algebra and want to exhibit a quantum cluster algebra structure on this, it will suffice to think of $M$ simply as a rule determining the exchange monomials.

The quantum cluster algebra $\curly{A}_{q}=\curly{A}_{q}(\underline{x},B,L)$ defined by the initial data $(\underline{x},B,L)$ is the subalgebra of $\curly{F}$ generated by the set of all quantum cluster variables, that is, those elements of $\curly{F}$ obtained from the initial cluster variables by iterated mutation.  We note that the presence of the factor $1/2$ in the quantum exchange relations is the reason for assuming that the element $q\in \mathbb{K}$ has a square root.

We will need to work with quantum cluster algebras with coefficients (also called frozen variables).  That is, we designate some of the elements of the initial cluster to be mutable (i.e. we are allowed to mutate these) and the remainder to be non-mutable.  We will also talk about the corresponding indices for the variables as being mutable or not; in \cite{BZ-QCA} the former are referred to as exchangeable indices.  The rank of the quantum cluster algebra is the number of mutable variables in a cluster; we will refer to the total number of variables, mutable and not, as the cardinality of the cluster.

Note that we will adopt the convention that $B$ will be a \emph{square} matrix---in the literature it is more common to let $B$ have as column indices just the mutable indices (\cite{GSV-Book} adopts the transpose convention, of having the rows indexed by the mutable indices).  At some points---notably in the next paragraph---we will need the submatrix $B_{\text{mut}}$ of $B$ given by taking only the columns of $B$ with mutable indices.  The matrix $B_{\text{mut}}$ is often referred to as the extended exchange matrix and \emph{its} submatrix $B_{\text{mut}}^{\textrm{mut}}$ with row set also the mutable indices is what is usually called the principal part of $B$.  Our square matrix $B$ is simply the ``skew-symmetric extension'' of $B_{\text{mut}}$, i.e. completing $B_{\text{mut}}$ to a square matrix in the unique way so that $B$ is skew-symmetric and so that if $i$ and $j$ are non-mutable indices then $b_{ij}=0$.  (The latter choice accords with the convention that the exchange quiver has no arrows between frozen vertices.)

The natural requirement that all mutated quantum clusters also quasi-commute leads to a compatibility condition between $B$ and $L$, namely that $\transpose{(B_{\text{mut}})}L$ consists of two blocks, one a positive integer scalar multiple of the identity and one zero.  The non-zero block must correspond exactly to the mutable (column) indices.  However, these blocks need not be contiguous, depending on the ordering of the row and column labels.  (Only one positive integer is required, as the principal part of $B$ is assumed to be skew-symmetric; if one assumes just skew-symmetrizability then the non-zero block is only required to be diagonal with positive integer diagonal entries.)  

Let us say that an arbitrary skew-symmetric matrix $A$ is $d$-compatible with $B$ if $\transpose{(B_{\text{mut}})}A$ has the form described above for some non-negative integer $d$.  It will suit our purposes later to allow $d=0$ (i.e. $\transpose{(B_{\text{mut}})}A=0$), even though $0$-compatibility is not permitted for the compatibility of a quasi-commutation matrix $L$ with $B$.

Importantly, Berenstein and Zelevinsky show that the exchange graph (whose vertices are the clusters and edges are mutations) remains unchanged in the quantum setting.  That is, the matrix $L$ does not influence the exchange graph.  It follows that quantum cluster algebras of finite type are classified by Dynkin types in exactly the same way as the classical cluster algebras.

Known or conjectured examples of quantum cluster algebras include
\begin{itemize}
\item quantum symmetric algebras (necessarily of cluster algebra rank 0);
\item quantum Grassmannians of finite cluster algebra type (\ie $\KqGr{2}{n}$ and $\KqGr{3}{6}$, $\KqGr{3}{7}$ and $\KqGr{3}{8}$) (\cite{Gr2nSchubertQCA});
\item Schubert cells of the quantum Grassmannians $\KqGr{2}{n}$ (\cite{Gr2nSchubertQCA},\cite{GLS-QuantumPFV});
\item the quantum coordinate ring of the unipotent subgroup $N(w)$ of a symmetric Kac--Moody group $G$ associated with a Weyl group element $w$ (\cite{GLS-QuantumPFV}), and hence as special cases of this
\begin{itemize}
\item the quantum coordinate ring of the big cell of a partial flag variety associated to $G$ and 
\item quantum matrices $\KqMat{k}{n}$; and
\end{itemize}
\item conjecturally, quantum double Bruhat cells of semisimple Lie groups (\cite{BZ-QCA}). 
\end{itemize}
We note that recently Goodearl and Yakimov (\cite{GoodearlYakimov}) have studied the existence of initial seeds in a large class of algebras, the so-called CGL extensions.

\section{Graded seeds and graded quantum cluster algebras}\label{s:gradedQCAs}

Berenstein and Zelevinsky (\cite[Definition~6.5]{BZ-QCA}) have given a definition of graded quantum seeds, which give rise to module gradings but not algebra gradings.  In what follows, we will have need of algebra gradings on quantum cluster algebras and so we now give a different definition of a graded seed, inspired by that of Berenstein and Zelevinsky but not equivalent to it.

\begin{definition} A graded quantum seed is a quadruple $(\underline{x},B,L,G)$ such that
\begin{enumerate}[label=(\alph*)]
\item $(\underline{x}=(X_{1},\dotsc ,X_{r}),B,L)$ is a quantum seed of cardinality $r$ and
\item $G\in \integ^{r}$ is an integer (column) vector such that for all mutable indices $j$, the $j$th row of $B$, $B_{j}$, satisfies $B_{j}G=0$.
\end{enumerate}
\end{definition}

We will set $\deg_{G}(X_{i})=G_{i}$ for all $X_{i}$ belonging to the cluster $\underline{x}$.  Then the second condition of the definition is equivalent to asking that every exchange relation (as encoded by the rows $B_{j}$) is homogeneous with respect to this degree, in the sense that the two Laurent monomials determining $X_{j}^{\prime}$ are of the same homogeneous degree.  From the quiver perspective, this asks that the sum of the degrees of the variables with arrows to a given mutable vertex is equal to the sum of the degrees of the variables at the end of arrows leaving that vertex.

In contrast to the definition of Berenstein and Zelevinsky, the above can be extended to an algebra grading on the quantum torus associated to $(\underline{x},B,L)$, simply by setting $\deg_{G}(X_{i}^{-1})=-\deg_{G}(X_{i})$ and extending $\deg_{G}$ additively to all (Laurent) monomials.

We also need to be able to mutate our grading in a sensible fashion and it is clear what we ought to do.  Let $(\underline{x}^{\prime},B^{\prime},L^{\prime})$ be the quantum seed given by mutation of $(\underline{x},B,L)$ in the direction $j$.  We set $G^{\prime}_{i}=G_{i}$ for $i\neq j$ (i.e. the degrees of variables we are not mutating at this point remain the same).  Then the homogeneity of the exchange relation $X_{j}^{\prime}=M(\underline{b}_{k}^{+})+M(\underline{b}_{k}^{-})$ implies that we should set 
\[ G^{\prime}_{j} =\deg_{G^{\prime}}(X_{j}^{\prime}) =\deg_{G}(M(\underline{b}_{k}^{+})) = \deg_{G}(M(\underline{b}_{k}^{-})). \]

As discussed in \cite{BFZ-CA3} and \cite{BZ-QCA}, the mutation operations can also be expressed in terms of row and column operations, or more concisely as corresponding matrix multiplications.  To this end, we recall the definition of a matrix $E$ (denoted $E_{+}$ in \cite{BZ-QCA}) that encodes mutation of a seed with exchange matrix $B$ in the direction $j$ as follows:
\[ E_{rs}=
\begin{cases} 
\delta_{rs} & \text{if}\ s\neq j; \\
-1 & \text{if}\ r=s=j; \\
\max(0,-b_{rj}) & \text{if}\ r\neq s=j.
\end{cases}
\]
Then $B^{\prime}=EB\transpose{E}$ and $L^{\prime}=\transpose{E}LE$.  Our mutation of $G$ can be written in terms of $E$ similarly.

\begin{lemma}\label{l:mutofgradinggivenbyE} $G^{\prime}=\transpose{E}G$.
\end{lemma}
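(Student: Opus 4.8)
The plan is to verify directly from the definitions that the vector $G'$ produced by our mutation rule for the grading coincides, component by component, with $\transpose{E}G$. Since both $G'$ and $\transpose{E}G$ agree with $G$ away from the $j$th component — for $i \neq j$ we have $(\transpose{E}G)_i = \sum_r E_{ri}G_r = \sum_r \delta_{ri}G_r = G_i = G'_i$ using the first case of the definition of $E$ — the entire content is the single equality for the $j$th component, namely that
\[ G'_j = \deg_G(M(\underline{b}_k^+)) = \sum_{r} E_{rj} G_r. \]

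The key step is to expand $\deg_G(M(\underline{b}_k^+))$. By the definition of $M$, the monomial $M(\underline{b}_k^+)$ is a scalar (a power of $q$) times $X_1^{a_1}\dotsm X_r^{a_r}$ where $(a_1,\dotsc,a_r) = \underline{b}_k^+ = -\underline{\boldsymbol{e}}_j + \sum_{b_{ij}>0} b_{ij}\underline{\boldsymbol{e}}_i$; that is, $a_j = -1$ and $a_i = \max(0, b_{ij})$ for $i \neq j$. The scalar prefactor has degree zero, so by additivity of $\deg_G$ on monomials,
\[ \deg_G(M(\underline{b}_k^+)) = \sum_{i} a_i G_i = -G_j + \sum_{i \neq j}\max(0,b_{ij})G_i. \]
On the other side, using the defining cases of $E$ we compute $(\transpose{E}G)_j = \sum_r E_{rj}G_r = E_{jj}G_j + \sum_{r\neq j}E_{rj}G_r = -G_j + \sum_{r\neq j}\max(0,-b_{rj})G_r$. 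Comparing the two, the equality $G'_j = (\transpose{E}G)_j$ reduces to
\[ \sum_{i\neq j}\max(0,b_{ij})G_i = \sum_{r\neq j}\max(0,-b_{rj})G_r, \]
which is precisely the statement that $\deg_G(M(\underline{b}_k^+)) = \deg_G(M(\underline{b}_k^-))$, i.e.\ the homogeneity of the exchange relation. But this is exactly the condition $B_jG = 0$ imposed in part (b) of the definition of a graded quantum seed: indeed $B_jG = \sum_i b_{ij}G_i = \sum_{i\neq j}b_{ij}G_i$ (the $j$th term vanishing since $B$ is skew-symmetric, $b_{jj}=0$), and splitting the sum into positive and negative $b_{ij}$ gives exactly $\sum_{b_{ij}>0}b_{ij}G_i - \sum_{b_{ij}<0}(-b_{ij})G_i = \sum_{i\neq j}\max(0,b_{ij})G_i - \sum_{i\neq j}\max(0,-b_{ij})G_i$.

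I do not expect any genuine obstacle here; the result is essentially a bookkeeping identity once one recalls that our definition of $G'_j$ was \emph{chosen} to make the exchange relation homogeneous, and that $B^{\prime}=EB\transpose{E}$, $L^{\prime}=\transpose{E}LE$ already package the seed mutation as matrix multiplication by $E$ on the left and $\transpose{E}$ on the right. The only point requiring a word of care is that $G$ is a column vector being hit by $\transpose{E}$ (so the rows of $E$ become the columns doing the contracting), matching the pattern $L^{\prime}=\transpose{E}LE$ rather than $B^{\prime}=EB\transpose{E}$; this is consistent with $G$ behaving like the ``quasi-commutation'' data rather than the ``exchange'' data, and indeed one can note as a sanity check that $\deg_G$ extends the pairing $(X_i \mapsto G_i)$ compatibly with how $L$ transforms. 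Writing the argument cleanly, the proof is just the chain of equalities above together with the observation that the mutability of $j$ is what licenses invoking $B_jG=0$.
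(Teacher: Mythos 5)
Your proof is correct and is exactly the direct componentwise verification that the paper leaves to the reader (the paper's proof reads simply ``This is straightforward to check directly''). The only cosmetic remark: in the final paragraph you write $B_jG=\sum_i b_{ij}G_i$; taking $B_j$ as the literal $j$th matrix row gives $\sum_i b_{ji}G_i$, but since $B$ is skew-symmetric these differ only by a sign, so the conclusion $B_jG=0 \iff \sum_{i\neq j}\max(0,b_{ij})G_i=\sum_{i\neq j}\max(0,-b_{ij})G_i$ is unaffected; indeed the paper's own identity $B_j=\underline{b}_j^+-\underline{b}_j^-$ matches your convention.
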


\begin{proof} This is straightforward to check directly.
\end{proof}

We may also re-express this in terms of the vectors $\underline{b}_{k}^{\pm}$ defined above:
\[ G_{i}^{\prime} = \begin{cases} G_{i} & \text{if}\ i\neq k \\ \underline{b}_{k}^{-}\cdot G & \text{if}\ i=k \end{cases}. \]
Since $B_{k}=\underline{b}_{k}^{+}-\underline{b}_{k}^{-}$ and $G$ is a grading, so $B_{k}G=0$, we have $\underline{b}_{k}^{+}\cdot G=\underline{b}_{k}^{-}\cdot G$ so that we may use $\underline{b}_{k}^{+}$ instead of $\underline{b}_{k}^{-}$ in calculating $G^{\prime}$.

We also need to know that this mutation operation does indeed produce another graded seed.

\begin{lemma}\label{l:mutofgrading} For each mutable index $j$, $(B^{\prime})_{j}G^{\prime}=0$.
\end{lemma}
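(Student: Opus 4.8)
The plan is to prove this by a direct matrix computation, exploiting the already-established matrix formulations of the mutation operations. We know from the discussion preceding the statement that $B' = EB\transpose{E}$ and, by Lemma~\ref{l:mutofgradinggivenbyE}, that $G' = \transpose{E}G$. So the quantity we must show vanishes is $(B')_j G' = (EB\transpose{E})_j \transpose{E} G$. Writing this as $\underline{\boldsymbol{e}}_j^{\,T} E B \transpose{E}\transpose{E} G$, the task reduces to understanding the row vector $\underline{\boldsymbol{e}}_j^{\,T} E$ and the matrix $\transpose{E}\transpose{E}$ acting on $G$, and then invoking the hypothesis $B_i G = 0$ for all mutable $i$.

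First I would record the key structural facts about $E$. Since $E$ agrees with the identity in every column $s \neq j$, the $j$th row of $E$ is $\underline{\boldsymbol{e}}_j^{\,T} E = -\underline{\boldsymbol{e}}_j^{\,T} + \sum_{r\neq j}\max(0,-b_{rj})\,\underline{\boldsymbol{e}}_r^{\,T}$ — up to sign conventions this is essentially $\underline{b}_j^{-}$ (compare the formulas $\underline{b}_k^{-} = -\underline{\boldsymbol{e}}_k - \sum_{b_{ik}<0} b_{ik}\underline{\boldsymbol{e}}_i$, noting $\max(0,-b_{rj}) = -b_{rj}$ exactly when $b_{rj}<0$). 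The crucial point is that every index $r$ appearing with nonzero coefficient in this row of $E$ is either $j$ itself or an index with $b_{rj}\neq 0$; in particular, since $B$ is skew-symmetric, $b_{rj}\neq 0$ forces $b_{jr}\neq 0$, and by the convention that there are no arrows between frozen vertices, if $j$ is mutable then every such $r$ is also relevant in the sense needed. More usefully, I would observe that mutation preserves the grading condition \emph{only on mutable rows}, so it suffices to handle a fixed mutable $j$.

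The cleanest route is probably to avoid $E$ entirely after the first reduction and argue as follows. By the remark immediately after Lemma~\ref{l:mutofgradinggivenbyE}, $G'_i = G_i$ for $i\neq k$ and $G'_k = \underline{b}_k^{+}\cdot G = \underline{b}_k^{-}\cdot G$ (using $B_k G = 0$). Now fix a mutable index $j$ and consider $(B')_j G' = \sum_i b'_{ji} G'_i$. Using the standard Fomin–Zelevinsky mutation formula for $b'_{ji}$ in terms of $b_{ji}$, $b_{jk}$ and $b_{ki}$, split the sum into the $i=k$ term and the $i\neq k$ terms, substitute $G'_k = \underline{b}_k^{-}\cdot G$, and expand. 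One then collects terms: the contributions involving $b_{jk}$ times $(\text{stuff})\cdot G$ should cancel against the $i=k$ term precisely because $B_k G = 0$, while the leftover $\sum_i b_{ji} G_i$ equals $B_j G$, which is zero by hypothesis since $j$ is mutable. The sign-dependent part of the mutation rule (the $[\cdot]_+$ truncations) is where care is needed: one must check that the "positive/negative part" bookkeeping matches up so that the $b_{jk}$-terms really do combine into $b_{jk}(\underline{b}_k^{\pm}\cdot G)$ with a consistent choice of sign, and here the freedom to use either $\underline{b}_k^{+}$ or $\underline{b}_k^{-}$ (guaranteed by $B_k G = 0$) is exactly what makes the cancellation work.

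I expect the main obstacle to be precisely this last bookkeeping step — verifying that the signs and the $\max(0,\cdot)$ truncations in the exchange-matrix mutation rule conspire correctly. This is the usual mild unpleasantness in quantum/classical cluster algebra proofs, and the standard trick is to work with $E$ abstractly via the identities $B' = EB\transpose{E}$, $G' = \transpose{E}G$: then $(B')_jG' = \underline{\boldsymbol{e}}_j^{\,T}EB\transpose{E}\transpose{E}G$, and since $j$ is mutable one can check that $\underline{\boldsymbol{e}}_j^{\,T}E$ has a special form (it is $\pm\underline{\boldsymbol{e}}_j^{\,T}$ plus a combination of rows indexed by neighbours of $j$), reducing the whole expression to a combination of $B_iG$ over mutable $i$, each of which vanishes. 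That route turns the sign-chasing into a single structural observation about $E$ and is likely the argument the authors have in mind when they would, here too, simply say this is straightforward to check directly; but for a written proof I would spell out the $E$-based computation rather than the index-heavy one.
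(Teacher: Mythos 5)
Your proposal follows the same matrix-algebraic route as the paper (reducing $(B')_j G'$ to $\underline{\boldsymbol{e}}_j^{\,T}EB\transpose{E}\transpose{E}G$), but it never reaches a proof, for two concrete reasons. First, the single identity that makes the paper's argument work --- $E^{2}=I$, recalled from \cite[Proposition~3.4]{BZ-QCA} --- is never invoked. You correctly observe that "the matrix $\transpose{E}\transpose{E}$ acting on $G$" needs to be understood, but you never resolve it: once one knows $(\transpose{E})^{2}=\transpose{(E^{2})}=I$, the whole expression collapses at once to $(EBG)_{j}=E_{j}(BG)$, and the remaining step is immediate. Without this, neither your $E$-based sketch nor your final-paragraph restatement of it closes.

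Second, your claimed formula for the $j$th row of $E$ is wrong: you wrote $\underline{\boldsymbol{e}}_{j}^{\,T}E = -\underline{\boldsymbol{e}}_{j}^{\,T}+\sum_{r\neq j}\max(0,-b_{rj})\,\underline{\boldsymbol{e}}_{r}^{\,T}$, but this is $E\underline{\boldsymbol{e}}_{j}$ (the $j$th \emph{column} of $E$), not $\underline{\boldsymbol{e}}_{j}^{\,T}E$. From the definition of $E$ (with $k$ the mutation direction), the $k$th row of $E$ is simply $-\underline{\boldsymbol{e}}_{k}^{\,T}$, and for $i\neq k$ the $i$th row is $\underline{\boldsymbol{e}}_{i}^{\,T}+\max(0,-b_{ik})\underline{\boldsymbol{e}}_{k}^{\,T}$; in either case a row of $E$ has nonzero entries only at the index itself and possibly at the mutation direction $k$. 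This is exactly what finishes the paper's computation: since both $j$ (by hypothesis) and $k$ (as the direction of mutation) are mutable, $E_{j}(BG)$ is a $\mathbb{Z}$-combination of $(BG)_{j}=B_{j}G=0$ and $(BG)_{k}=B_{k}G=0$, hence zero. Your index-based alternative via the Fomin--Zelevinsky mutation rule is plausibly workable but you leave the crucial sign/truncation bookkeeping unverified and explicitly flag it as the expected obstacle, so it cannot be accepted as a proof either.
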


\begin{proof} As noted in \cite[Proposition~3.4]{BZ-QCA}, $E^{2}=1$ and so
\begin{align*}
(B^{\prime})_{j}G^{\prime} & = (EB\transpose{E})_{j}(\transpose{E}G) \\
 & = (EB(\transpose{E})^{2}G)_{j} \\
 & = (EBG)_{j} \\
 & = E_{j}(BG) \\
 & = 0 
\end{align*}
since $(BG)_{j}=B_{j}G=0$.  Here, $(\ )_{j}$ refers to the $j$th row for matrices and column vectors as appropriate.
\end{proof}

That is, $(\underline{x}^{\prime},B^{\prime},L^{\prime},G^{\prime})$ is again a graded seed.  Furthermore, this mutation is involutive (cf.\ \cite[Proposition~4.10]{BZ-QCA}).  Then we see that repeated mutation propagates a grading on an initial seed to every quantum cluster variable and hence to the associated quantum cluster algebra, as every exchange relation is homogeneous.

\begin{corollary} The quantum cluster algebra $\curly{A}_{q}(\underline{x},B,L,G)$ associated to an initial graded quantum seed $(\underline{x},B,L,G)$ is a $\integ$-graded algebra. \qed
\end{corollary}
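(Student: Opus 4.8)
The statement asserts that $\curly{A}_{q}:=\curly{A}_{q}(\underline{x},B,L,G)$, which by definition is the subalgebra of $\curly{F}$ generated by all quantum cluster variables, is $\integ$-graded. Since a subalgebra of a $\integ$-graded $\mathbb{K}$-algebra that is generated by homogeneous elements is automatically a graded subalgebra, it suffices to exhibit a single $\integ$-grading on an algebra containing $\curly{A}_{q}$ for which \emph{every} quantum cluster variable is homogeneous. The natural candidate is the grading $\deg_{G}$ on the initial quantum torus $\curly{T}$ attached to $(\underline{x},B,L)$ introduced just before the statement: this is a genuine $\integ$-algebra-grading precisely because $\curly{T}$ is a domain whose $\mathbb{K}$-basis consists of the Laurent monomials in $X_{1},\dotsc,X_{r}$, on which $\deg_{G}$ is defined additively. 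By the quantum Laurent phenomenon of Berenstein--Zelevinsky (\cite{BZ-QCA}), every quantum cluster variable in fact lies in $\curly{T}$, so it is enough to show that each one is homogeneous there.

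I would prove this by induction on the distance in the exchange graph from the initial seed, with inductive hypothesis: any seed $(\underline{x}',B',L',G')$ reached from $(\underline{x},B,L,G)$ by iterated mutation is a graded quantum seed, and each $X'_{i}\in\underline{x}'$, regarded inside $\curly{T}$, is homogeneous with $\deg_{G}(X'_{i})=G'_{i}$. The base case is the definition of $\deg_{G}$ together with the hypothesis that $(\underline{x},B,L,G)$ is a graded seed. For the inductive step, mutate $(\underline{x}',B',L',G')$ in a direction $k$ to obtain $(\underline{x}'',B'',L'',G'')$; by Lemma~\ref{l:mutofgrading} this is again a graded seed, and for $i\neq k$ we have $X''_{i}=X'_{i}$ and $G''_{i}=G'_{i}$, so nothing needs checking there. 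For $X''_{k}$ I would multiply the exchange relation $X''_{k}=M(\underline{b}_{k}^{+})+M(\underline{b}_{k}^{-})$ through on the right by $X'_{k}$, cancelling the factor $(X'_{k})^{-1}$ present in each monomial, to get an identity $X''_{k}X'_{k}=u_{+}P^{+}+u_{-}P^{-}$ that now lives in $\curly{T}$, where $u_{\pm}\in\mathbb{K}$ are powers of $q^{1/2}$ and $P^{\pm}$ are products of non-negative powers of the $X'_{i}$ with $i\neq k$ (the reorderings forced by non-commutativity only introduce further powers of $q$, all of degree zero). By the inductive hypothesis $P^{+}$ and $P^{-}$ are homogeneous in $\curly{T}$, of degrees $\sum_{b'_{ik}>0}b'_{ik}G'_{i}$ and $\sum_{b'_{ik}<0}(-b'_{ik})G'_{i}$ respectively, and these are equal because a graded seed satisfies $B'_{k}G'=0$, equivalently $\underline{b}_{k}^{+}\cdot G'=\underline{b}_{k}^{-}\cdot G'$. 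Hence $X''_{k}X'_{k}$ is homogeneous in $\curly{T}$; since $\curly{T}$ is a $\integ$-graded domain and $X'_{k}$ is homogeneous, $X''_{k}$ is homogeneous, and comparing its degree with the grading-mutation rule $G''_{k}=\underline{b}_{k}^{-}\cdot G'$ yields $\deg_{G}(X''_{k})=G''_{k}$. This closes the induction.

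Granting the induction, every quantum cluster variable is a homogeneous element of the $\integ$-graded domain $\curly{T}$, so $\curly{A}_{q}$ is generated by homogeneous elements of $\curly{T}$ and is therefore a $\integ$-graded subalgebra, as claimed. The only point requiring genuine care is the inductive step: the exchange relation is most naturally an identity in the quantum torus of the seed $(\underline{x}',B',L')$, where $(X'_{k})^{-1}$ makes sense, rather than in $\curly{T}$ itself, so one must clear it to land back in $\curly{T}$ and then invoke the elementary fact that in a $\integ$-graded domain a factor of a homogeneous element is homogeneous provided it lies in the ring. Everything else is bookkeeping: Lemma~\ref{l:mutofgrading} keeps successive seeds graded, the grading-mutation rule records the degrees, and every power of $q^{1/2}$ has degree zero and is therefore invisible to the homogeneity argument.
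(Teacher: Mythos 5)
Your proof is correct and takes essentially the same route as the paper, which leaves this corollary as a direct consequence of the preceding discussion (Lemma~\ref{l:mutofgrading}, the involutivity of mutation, and the remark that every exchange relation is homogeneous). You supply two worthwhile details the paper leaves implicit: the explicit appeal to the quantum Laurent phenomenon to pin every cluster variable inside the fixed graded quantum torus $\curly{T}$, and the denominator-clearing step reducing homogeneity of $X''_{k}$ to the fact that in a $\integ$-graded domain a factor lying in the ring of a homogeneous element with homogeneous cofactor is itself homogeneous.
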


We note in particular that this construction---by definition---says that every quantum cluster variable of a graded quantum cluster algebra is homogeneous for this grading.

\begin{remark} It is clear that all of the above is insensitive to replacing $G$ with $-G$, i.e. reversing the sign of every degree.  Indeed for each graded quantum cluster algebra $\curly{A}_{q}(\underline{x},B,L,G)$ we have an isomorphic graded quantum cluster algebra $\curly{A}_{q}^{-}=\curly{A}_{q}(\underline{x},B,L,-G)$ (where ``isomorphic'' here means as quantum cluster algebras, not just as algebras).
\end{remark}

\begin{remark} In none of the above have we used the quasi-commutation matrix $L$.  Indeed all of the above goes through for classical cluster algebras too.
\end{remark}

One consequence of the existence of a grading for a quantum cluster algebra is that this allow us to re-scale elements of the initial seed by powers of $q$ determined by the grading and obtain an isomorphic quantum cluster algebra, as follows.

\begin{proposition}\label{p:rescaleQCAbypowersofq} Let $\curly{A}_{q}=\curly{A}_{q}(\underline{x}=(X_{1},\dotsc ,X_{r}),B,L,G)$ be a graded quantum cluster algebra.  Let $\tilde{\underline{x}}=(\tilde{X}_{1},\dotsc ,\tilde{X}_{r})$ be defined by $\tilde{X}_{i}=q^{G_{i}/2}X_{i}$.  Then there is an isomorphism of graded quantum cluster algebras between $\tilde{\curly{A}}_{q}=\curly{A}_{q}(\tilde{\underline{x}},B,L,G)$ and $\curly{A}_{q}=\curly{A}_{q}(\underline{x},B,L,G)$.
\end{proposition}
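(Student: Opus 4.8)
The plan is to realise the desired isomorphism as the restriction of a single algebra automorphism of the ambient skew-field $\curly{F}$, and then to check by induction on mutation sequences that this automorphism carries the quantum cluster variables of $\curly{A}_{q}(\underline{x},B,L,G)$ onto those of $\tilde{\curly{A}}_{q}$, compatibly with the grading. First I would note that since $q^{1/2}$ lies in the base field $\mathbb{K}$ it is central, so the elements $\tilde{X}_{i}=q^{G_{i}/2}X_{i}$ are invertible in the quantum torus $\curly{T}$ associated to $(\underline{x},B,L)$ and satisfy the same quasi-commutation relations as the $X_{i}$: $\tilde{X}_{i}\tilde{X}_{j}=q^{(G_{i}+G_{j})/2}X_{i}X_{j}=q^{l_{ij}}q^{(G_{i}+G_{j})/2}X_{j}X_{i}=q^{l_{ij}}\tilde{X}_{j}\tilde{X}_{i}$. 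Hence $\tilde{\underline{x}}$ is again a transcendence base and generating set for $\curly{F}$ with the same quasi-commutation matrix $L$, so $(\tilde{\underline{x}},B,L,G)$ is a genuine graded quantum seed, and the assignment $X_{i}\mapsto \tilde{X}_{i}$ extends (by the defining relations of $\curly{T}$) to a $\mathbb{K}$-algebra endomorphism $\phi$ of $\curly{T}$; it is surjective since $X_{i}=\phi(q^{-G_{i}/2}X_{i})$, hence, $\curly{T}$ being a Noetherian domain, an automorphism, and it extends uniquely to an automorphism of $\curly{F}$ (with inverse induced by $X_{i}\mapsto q^{-G_{i}/2}X_{i}$).

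The core step is the following claim, proved by induction on the length of a mutation sequence: if $(\underline{y}=(Y_{1},\dotsc,Y_{r}),B',L',H)$ is the graded seed of $\curly{A}_{q}(\underline{x},B,L,G)$ obtained by a given sequence of mutations, and $(\tilde{\underline{y}}=(\tilde{Y}_{1},\dotsc,\tilde{Y}_{r}),B',L',H)$ is the graded seed of $\tilde{\curly{A}}_{q}$ obtained by the same sequence, then $\tilde{Y}_{i}=q^{H_{i}/2}Y_{i}$ for every $i$; equivalently $\phi(Y_{i})=\tilde{Y}_{i}$. The base case is the definition of $\tilde{\underline{x}}$. For the inductive step, mutate in a mutable direction $k$. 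The vectors $\underline{b}_{k}^{\pm}$ and the matrices $B',L'$ are identical for the two seeds, and, since $q^{1/2}$ is central, $\tilde{Y}_{1}^{a_{1}}\dotsm\tilde{Y}_{r}^{a_{r}}=q^{\frac{1}{2}\sum_{i}a_{i}H_{i}}Y_{1}^{a_{1}}\dotsm Y_{r}^{a_{r}}$ for any integer exponents; hence the exchange monomials satisfy $\tilde{M}(\underline{a})=q^{\frac{1}{2}\,\underline{a}\cdot H}M(\underline{a})$ for $\underline{a}=\underline{b}_{k}^{\pm}$. Now $(\underline{y},B',L',H)$ is a graded seed, so $B'_{k}H=0$, i.e.\ $(\underline{b}_{k}^{+}-\underline{b}_{k}^{-})\cdot H=0$; thus $\underline{b}_{k}^{+}\cdot H=\underline{b}_{k}^{-}\cdot H$, and by the mutation rule for the grading this common value is exactly $H'_{k}$. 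Therefore $\tilde{M}(\underline{b}_{k}^{+})=q^{H'_{k}/2}M(\underline{b}_{k}^{+})$ and $\tilde{M}(\underline{b}_{k}^{-})=q^{H'_{k}/2}M(\underline{b}_{k}^{-})$, whence $\tilde{Y}'_{k}=\tilde{M}(\underline{b}_{k}^{+})+\tilde{M}(\underline{b}_{k}^{-})=q^{H'_{k}/2}\bigl(M(\underline{b}_{k}^{+})+M(\underline{b}_{k}^{-})\bigr)=q^{H'_{k}/2}Y'_{k}$; for $i\neq k$ neither variable nor degree changes, so $\tilde{Y}'_{i}=q^{H'_{i}/2}Y'_{i}$. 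This completes the induction.

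It follows that $\phi$ maps the set of all quantum cluster variables of $\curly{A}_{q}(\underline{x},B,L,G)$ bijectively onto that of $\tilde{\curly{A}}_{q}$, hence restricts to an algebra isomorphism $\curly{A}_{q}(\underline{x},B,L,G)\to\tilde{\curly{A}}_{q}$. Since it sends the initial graded seed $(\underline{x},B,L,G)$ to $(\tilde{\underline{x}},B,L,G)$ and, by the inductive claim, intertwines mutation in every direction while preserving the exchange matrices, quasi-commutation matrices and grading vectors, it is an isomorphism of graded quantum cluster algebras.

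\textbf{Main obstacle.} Everything except the inductive step is routine bookkeeping forced by the centrality of $q^{1/2}$. The one genuinely load-bearing point is the inductive step, and there the essential input is precisely the grading condition $B'_{k}H=0$: it is exactly this that makes the powers $q^{\frac12\,\underline{b}_{k}^{+}\cdot H}$ and $q^{\frac12\,\underline{b}_{k}^{-}\cdot H}$ coincide, so that a single factor $q^{H'_{k}/2}$ can be pulled out of the whole mutated exchange relation. Without a grading this argument would fail, which is why the hypothesis that $(\underline{x},B,L,G)$ is a \emph{graded} quantum seed is indispensable.
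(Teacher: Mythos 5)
Your proposal is correct and follows essentially the same argument as the paper: the heart of both is the computation $\tilde{M}(\underline{a})=q^{\frac{1}{2}\,\underline{a}\cdot G}M(\underline{a})$ together with the grading identity $\underline{b}_{k}^{+}\cdot G=\underline{b}_{k}^{-}\cdot G=G'_{k}$, which lets the common factor $q^{G'_{k}/2}$ be extracted from both exchange monomials, and this is proved by the same induction on mutation sequences. The only cosmetic difference is that you package the conclusion via an explicit automorphism $\phi$ of the quantum torus, whereas the paper simply observes that the two sets of quantum cluster variables generate the same subalgebra of $\curly{F}$.
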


\begin{proof} Firstly note that $\underline{x}$ and $\tilde{\underline{x}}$ determine the same quantum torus and hence the corresponding quantum cluster algebras $\curly{A}_{q}$ and $\tilde{\curly{A}}_{q}$ can be viewed as subalgebras of the same skew-field of fractions $\curly{F}$ of this quantum torus.  We show that if $y$ and $\tilde{y}$ are quantum cluster variables for $\curly{A}_{q}$ and $\tilde{A}_{q}$ respectively that are obtained from their respective initial seeds by the same sequence of mutations, then $\tilde{y}=q^{\deg(y)/2}y$ where $\deg$ is the degree induced by the grading $G$.  

More precisely, let $(\underline{x}=(X_{1},\dotsc ,X_{r}),B,L,G)$ be any graded quantum seed (not necessarily equal to the initial data for $\curly{A}_{q}$; we abuse notation here).  Let $\tilde{\underline{x}}=(q^{G_{1}/2}X_{1},\dotsc ,q^{G_{r}/2}X_{r})$.  We claim that if $X_{k}^{\prime}$ is the variable obtained from $\underline{x}$ by mutation in the direction $k$ then $\tilde{X}_{k}^{\prime}=q^{G_{k}^{\prime}/2}X_{k}^{\prime}$, where $G^{\prime}$ is obtained from $G$ by mutation in the direction $k$.  

Firstly, recall the definition of the exchange monomial $M(a_{1},\dotsc ,a_{r})$ as
\[ M(a_{1},\dotsc ,a_{r}) \defeq q^{\frac{1}{2}\sum_{i<j} a_{i}a_{j}l_{ji}}X_{1}^{a_{1}}\dotsm X_{r}^{a_{r}}. \]
Then letting $\underline{a}=(a_{1},\dotsc ,a_{r})$ and
\[ \tilde{M}(\underline{a})=q^{\frac{1}{2}\sum_{i<j} a_{i}a_{j}l_{ji}}\tilde{X}_{1}^{a_{1}}\dotsm \tilde{X}_{r}^{a_{r}} \]
we have that
\begin{align*} \tilde{M}(\underline{a}) & =q^{\frac{1}{2}\sum_{i<j} a_{i}a_{j}l_{ji}}q^{\frac{1}{2}\sum_{i=1}^{r} a_{i}G_{i}}X_{1}^{a_{1}}\dotsm X_{r}^{a_{r}} \\
 & = q^{\frac{\underline{a}\cdot G}{2}}M(\underline{a}).
\end{align*}

Hence
\begin{align*} \tilde{X}_{k}^{\prime} & =\tilde{M}(\underline{b}_{k}^{+})+\tilde{M}(\underline{b}_{k}^{-}) \\
 & = q^{\frac{\underline{b}_{k}^{+} \cdot G}{2}}M(\underline{b}_{k}^{+})+q^{\frac{\underline{b}_{k}^{-} \cdot G}{2}}M(\underline{b}_{k}^{-}) \\
 & = q^{\frac{\underline{b}_{k}^{+} \cdot G}{2}}\left( M(\underline{b}_{k}^{+})+M(\underline{b}_{k}^{-})\right) \\
 & = q^{G_{k}^{\prime}/2}X_{k}^{\prime}
\end{align*}
as required.  Here we have used that $G$ being a grading for $B$ implies that $\underline{b}_{k}^{+}\cdot G=\underline{b}_{k}^{-}\cdot G$ and the equality of both of these with the $k$th entry of the mutated grading $G^{\prime}$, as shown after Lemma~\ref{l:mutofgradinggivenbyE}.

Now since the initial data for $\curly{A}_{q}$ and $\tilde{\curly{A}}_{q}$ differ only in the choice of initial cluster and, by the above, the subalgebras of $\curly{F}$ generated by the respective sets of quantum cluster variables are equal, it follows that these are isomorphic quantum cluster algebras.
\end{proof}

\noindent We note that this construction does not have a counterpart in the classical setting of commutative cluster algebras.  However it should have a semi-classical counterpart, for cluster algebras with compatible Poisson structures (in the sense of Gekhtman--Shapiro--Vainshtein).

\section{Skew-Laurent extensions of quantum cluster algebras}

In this section, we consider two constructions that produce graded quantum cluster algebra structures on skew-Laurent extensions of a given graded quantum cluster algebra.  The first simply adds the extending variable and its inverse as extra coefficients, while the second ``re-scales'' the original structure by multiplying each quantum cluster variable by a power of the extending variable.  This second construction is similar to that in Proposition~\ref{p:rescaleQCAbypowersofq} but in general produces a new quantum cluster algebra not isomorphic to the first.

The first construction proceeds as follows.

\begin{proposition}\label{p:skewLaurent-extra-coeffs} Let $\curly{A}_{q}=\curly{A}_{q}(\underline{x}=(X_{1},\ldots,X_{r}),B,L,G)$ be a graded quantum cluster algebra.  Let $\sigma\colon \curly{A}_{q} \to \curly{A}_{q}$ be an algebra automorphism such that for each $1\leq i \leq r$, there exists $c_{i}\in \integ$ such that $\sigma(X_{i})=q^{c_{i}}X_{i}$; that is, $\sigma$ acts by multiplication by powers of $q$ on the elements of the initial cluster.  Then the skew-Laurent extension $\curly{A}_{q}[y^{\pm 1};\sigma]$ is a graded quantum cluster algebra, with initial data $(\underline{x}',B',L',G')$ where
\begin{itemize}
\item $\underline{x}'=(X_{1},\ldots,X_{r},y,y^{-1})$, with $y$ and $y^{-1}$ additional coefficients,
\item $Q(B')$ is equal to $Q(B) \disjointunion \boxed{y\rule{0em}{0.9em}} \disjointunion \boxed{y^{-1}\rule{0em}{0.9em}}$,
\item $L'$ is determined by the quasi-commutation data in $L$ and the automorphism $\sigma$, and
\item $(G')_{i}=G_{i}$ for $1\leq i \leq r$, $G'_{r+1}=1$ and $G'_{r+2}=-1$.
\end{itemize}
\end{proposition}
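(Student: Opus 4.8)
The plan is to exhibit $\curly{A}_{q}[y^{\pm 1};\sigma]$ as the quantum cluster algebra of the graded seed $(\underline{x}',B',L',G')$. First I would fix the ambient skew-field: since $\sigma$ restricts to a toric automorphism of the quantum torus $\curly{T}$ underlying $\underline{x}$, one can form $\curly{T}[y^{\pm 1};\sigma]$ and take $\curly{F}'$ to be its skew-field of fractions, which contains $\curly{F}$ and $y$. In $\curly{F}'$ the relations $yX_{i}=q^{c_{i}}X_{i}y$ and $yy^{-1}=1$ hold, and these pin down the quasi-commutation matrix of the cluster $(X_{1},\dotsc ,X_{r},y,y^{-1})$: it is the matrix $L'$ obtained from $L$ by adjoining the column $\transpose{(-c_{1},\dotsc ,-c_{r})}$ for $y$, the column $\transpose{(c_{1},\dotsc ,c_{r})}$ for $y^{-1}$, and a zero entry between $y$ and $y^{-1}$. (Recording $y^{-1}$ as a separate frozen variable is the standard device for making a coefficient invertible; the underlying quantum torus is really the one on $X_{1},\dotsc ,X_{r},y$.)

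Next I would record the one genuinely substantive input: the hypothesis that $\sigma$ is an \emph{automorphism} of $\curly{A}_{q}$ forces $c:=(c_{1},\dotsc ,c_{r})$ to be a grading vector for $B$, that is $\transpose{(B_{\mathrm{mut}})}c=0$, equivalently $B_{k}c=0$ for every mutable $k$. Indeed, applying $\sigma$ to a one-step exchange relation $X_{k}'=M(\underline{b}_{k}^{+})+M(\underline{b}_{k}^{-})$ rescales its two Laurent monomials by $q^{\underline{b}_{k}^{+}\cdot c}$ and $q^{\underline{b}_{k}^{-}\cdot c}$; if these powers of $q$ differed, then $\sigma(X_{k}')-q^{\underline{b}_{k}^{+}\cdot c}X_{k}'$ would be a nonzero scalar multiple of the single Laurent monomial $M(\underline{b}_{k}^{-})$, which, involving $X_{k}^{-1}$, does not lie in $\curly{A}_{q}$ --- contradicting $\sigma(\curly{A}_{q})=\curly{A}_{q}$. (Conceptually, by the quantum Laurent phenomenon $\sigma$ rescales every quantum cluster variable monomial-by-monomial, so it can preserve $\curly{A}_{q}$ only if each quantum cluster variable is homogeneous for the degree $\underline{a}\mapsto\underline{a}\cdot c$, which is exactly the condition that $c$ is a grading vector.)

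Granting $\transpose{(B_{\mathrm{mut}})}c=0$, I would check that $(\underline{x}',B',L',G')$ is a graded quantum seed; this is short. The matrices $B'$ and $L'$ are skew-symmetric and integral by construction, and the vertices $y,y^{-1}$ of $Q(B')$ are isolated, so the rows and columns of $B'$ they index vanish. Hence $\transpose{(B'_{\mathrm{mut}})}$ is $\transpose{(B_{\mathrm{mut}})}$ with two zero columns adjoined, and $\transpose{(B'_{\mathrm{mut}})}L'$ is $\transpose{(B_{\mathrm{mut}})}L$ together with two further columns, each equal to $\pm\transpose{(B_{\mathrm{mut}})}c=0$; so $\transpose{(B'_{\mathrm{mut}})}L'$ has precisely the block shape required for compatibility, with the same positive integer $d$ as for $(\underline{x},B,L)$ and with the nonzero block still indexed by the mutable indices. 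For the grading condition, for mutable $j$ the $j$th row of $B'$ is $(B_{j}\mid 0\mid 0)$, while $G'$ agrees with $G$ in its first $r$ entries and has $G'_{r+1}=1$, $G'_{r+2}=-1$, so $(B')_{j}G'=B_{j}G=0$.

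Finally I would run the mutations and identify the algebra. For any mutable index $k$ the mutation matrix $E$ has off-diagonal entries only in column $k$, and only in rows $i$ with $b'_{ik}<0$; since $b'_{r+1,k}=b'_{r+2,k}=0$, this $E$ fixes the $y$- and $y^{-1}$-coordinates and keeps them uncoupled from the rest. So $B''=EB'\transpose{E}$, $L''=\transpose{E}L'E$ and $G''=\transpose{E}G'$ (the last via Lemma~\ref{l:mutofgradinggivenbyE}) again decompose as the mutation of $(B,L,G)$ at $k$ bordered by the unchanged, still-isolated frozen data $y,y^{-1}$ of degrees $1,-1$. By induction every seed reachable from $(\underline{x}',B',L',G')$ by mutating mutable indices is the corresponding seed of $(\underline{x},B,L,G)$ bordered by $y,y^{-1}$; in particular every exchange vector $\underline{b}_{k}^{\pm}$ that occurs has zero entries at $y,y^{-1}$, and since the scalar in $M(\underline{b}_{k}^{\pm})$ involves only the entries $l'_{ji}$ with $i,j\leq r$ --- which are the entries of $L$ --- each exchange monomial computed in the extended seed is literally the same element of $\curly{F}\subseteq\curly{F}'$ as the one computed in $(\underline{x},B,L)$. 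Hence the quantum cluster variables of $\curly{A}_{q}(\underline{x}',B',L',G')$ are exactly those of $\curly{A}_{q}(\underline{x},B,L,G)$ together with $y$ and $y^{-1}$, so the subalgebra of $\curly{F}'$ they generate is the one generated by $\curly{A}_{q}$, $y$ and $y^{-1}$, which is $\bigoplus_{n\in\integ}\curly{A}_{q}y^{n}=\curly{A}_{q}[y^{\pm 1};\sigma]$. By the Corollary of \S\ref{s:gradedQCAs}, applied to the graded seed $(\underline{x}',B',L',G')$, this algebra is $\integ$-graded, which finishes the argument. The step I expect to be the main obstacle is the one flagged in the second paragraph: extracting from ``$\sigma$ is an automorphism'' (and not merely a rescaling of the initial cluster by powers of $q$) the identity $\transpose{(B_{\mathrm{mut}})}c=0$, on which the compatibility of $L'$ with $B'$ entirely rests; everything else is the routine bookkeeping of the last paragraph that the adjoined coefficients $y,y^{-1}$ never enter an exchange relation.
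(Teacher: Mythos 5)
Your proof follows the same basic plan as the paper's (verify the extended seed is valid, then observe that the isolated frozen vertices $y,y^{-1}$ are never touched by mutation so the cluster variables are just those of $\curly{A}_q$ together with $y,y^{-1}$), but you have correctly isolated a step that the paper's very terse proof does not actually address. The paper justifies compatibility of $B'$ and $L'$ solely by the remark that $B'$ has zero rows at $y$ and $y^{-1}$. As your computation shows, that observation only reduces $\transpose{(B'_{\mathrm{mut}})}L'$ to $\transpose{(B_{\mathrm{mut}})}L$ augmented by the two columns indexed by $y$ and $y^{-1}$, and these columns equal $\mp\transpose{(B_{\mathrm{mut}})}c$ where $c=(c_1,\dotsc,c_r)$; they vanish if and only if $c$ is a grading vector for $B$. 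Nothing in the paper's proof establishes this. Your second paragraph supplies the missing ingredient: $\sigma$ being an \emph{algebra automorphism} of $\curly{A}_q$ (not merely a torus rescaling) forces $\transpose{(B_{\mathrm{mut}})}c=0$, since otherwise applying $\sigma$ to a one-step mutation $X_k'=M(\underline{b}_k^+)+M(\underline{b}_k^-)$ would show that a nonzero scalar multiple of the single Laurent monomial $M(\underline{b}_k^-)$ lies in $\curly{A}_q$, which is false. This is the right lemma, and it is what makes the hypothesis on $\sigma$ nonvacuous. In the paper's own application ($\curly{A}_q=\KqMat{k}{n-k}$ with $\sigma=q^{\deg}$) one has $c=G$, so the needed condition is satisfied by hypothesis and the downstream results are unaffected; but for the proposition as stated in this generality your extra argument is genuinely required.

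Two small caveats. First, the assertion that a nonzero multiple of $M(\underline{b}_k^-)$ cannot lie in $\curly{A}_q$ (because it involves $X_k^{-1}$) is correct but not entirely immediate from the Laurent phenomenon alone; it uses that $X_k$ is not invertible in $\curly{A}_q$, or equivalently that $M(\underline{b}_k^-)$ is not a Laurent polynomial in the cluster obtained by mutating at $k$, and a fully rigorous write-up would want to spell this out. Second, in the mutation bookkeeping paragraph you describe the frozen data $y,y^{-1}$ as ``unchanged''; the vertices themselves are indeed never mutated, but the $y$- and $y^{-1}$-columns of $L'$ do change under mutation (they become $\mp\transpose{E}c$), i.e.\ the auxiliary vector $c$ mutates to a grading for the new exchange matrix by Lemma~\ref{l:mutofgrading}. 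This does not affect your conclusion that the exchange monomials are computed entirely inside $\curly{F}$, since $\underline{b}_k^{\pm}$ always has zero entries at the $y,y^{-1}$ positions, but it is worth stating for precision.
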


\begin{proof}
The matrices $B'$ and $L'$ are compatible, as $B'$ has only zero entries in the rows corresponding to $y$ and $y^{-1}$.  The matrices $B'$ and $G'$ satisfy the grading condition for the same reason.  Furthermore it is clear that the set of cluster variables for $\curly{A}_{q}[y^{\pm 1};\sigma]$ is equal to the disjoint union of the set of cluster variables for $\curly{A}_{q}$ and $\{ y,y^{-1} \}$, and so these generate all of $\curly{A}_{q}[y^{\pm 1};\sigma]$ which is therefore a graded quantum cluster algebra.

We note that we have chosen to put $y$ in degree $1$ (and its inverse in degree $-1$), to accord with the natural $\integ$-grading on this skew-Laurent extension.  However neither the element $y$ nor its inverse interacts with the quantum cluster structure coming from $\curly{A}_{q}$ so this choice was essentially arbitrary.
\end{proof}

Here we did not alter the original quantum cluster variables but in our second construction, we re-scale these by powers of an extending variable $z$.  In its most general form, this re-scaling will involve two integer column vectors as (families of) parameters, $\underline{t}=(t_{1},\dotsc ,t_{r})$ and $\underline{u}=(u_{1},\dotsc ,u_{r})$.  Given two such vectors we will denote by $\underline{t} \wedge \underline{u}$ the skew-symmetric matrix $\transpose{\underline{t}}\underline{u}-\transpose{\underline{u}}\underline{t}$, i.e. the $(i,j)$-entry of $\underline{t}\wedge \underline{u}$ is $t_{i}u_{j}-t_{j}u_{i}$.

\begin{proposition} Let $\curly{A}_{q}=\curly{A}_{q}(\underline{x}=(X_{1},\ldots,X_{r}),B,L,G)$ be a graded quantum cluster algebra such that $L$ is $d$-compatible with $B$ with $d>0$, i.e.\ we have $(\transpose{B}L)_{ij}=d\delta_{ij}$ for all mutable indices $i$ and any index $j$.  (Such a $d$ certainly exists by the definition of compatibility between $B$ and $L$; we are simply naming it explicitly.)

Now let $\tau\colon \curly{A}_{q} \to \curly{A}_{q}$ be an algebra automorphism such that for each $1\leq i \leq r$, there exists $t_{i}\in \integ$ such that $\tau(X_{i})=q^{t_{i}}X_{i}$; that is, $\tau$ acts by multiplication by powers of $q$ on the elements of the initial cluster.  We set $\underline{t}=\transpose{(t_{1},\dotsc ,t_{r})}$.  Also, let $\underline{u}=\transpose{(u_{1},\dotsc ,u_{r})}\in \integ^{r}$ be such that $\underline{t} \wedge \underline{u}$ is $f$-compatible with $B$ for some $0\leq f<d$.

Then the following is a valid set of initial data for a graded quantum cluster algebra $\tilde{\curly{A}}_{q}^{\underline{t},\underline{u}}=\curly{A}_{q}(\underline{\tilde{x}},\tilde{B},\tilde{L},\tilde{G})$ where
\begin{itemize}
\item $\underline{\tilde{x}}=(X_{1}z^{u_{1}},\ldots,X_{r}z^{u_{r}})$,
\item $\tilde{B}=B$,
\item $\tilde{L}$ is determined by the quasi-commutation data in $L$ and the vectors $\underline{t}$ and $\underline{u}$, and
\item $\tilde{G}=G$,
\end{itemize}
with $\tilde{\curly{A}}_{q}^{\underline{t},\underline{u}}$ a subalgebra of the skew-field of fractions of the skew-Laurent extension $\curly{A}_{q}[z^{\pm 1};\tau]$ of $\curly{A}_{q}$.  
\end{proposition}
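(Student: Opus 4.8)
The plan is to verify directly that $(\underline{\tilde{x}},\tilde{B},\tilde{L},\tilde{G})$ meets every clause in the definition of a graded quantum seed, carrying out all computations inside a single ambient division ring. First I would fix that division ring: the automorphism $\tau$ extends to the skew-field of fractions $\curly{F}$ of the quantum torus on $\underline{x}$; the skew-Laurent ring $\curly{F}[z^{\pm 1};\tau]$ is a Noetherian domain, hence an Ore domain, with skew-field of fractions $\tilde{\curly{F}}$; and $\tilde{\curly{F}}$ contains $\curly{A}_{q}[z^{\pm 1};\tau]$ and in particular the elements $\tilde{X}_{i}=X_{i}z^{u_{i}}$. (When $\curly{A}_{q}$ is itself Noetherian, as in the applications we have in mind, $\tilde{\curly{F}}$ is literally the skew-field of fractions of $\curly{A}_{q}[z^{\pm 1};\tau]$.) Everything below is then an assertion about elements of $\tilde{\curly{F}}$.

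Second, the quasi-commutation computation. Using $z^{u_{i}}X_{j}=\tau^{u_{i}}(X_{j})z^{u_{i}}=q^{u_{i}t_{j}}X_{j}z^{u_{i}}$ together with $X_{i}X_{j}=q^{l_{ij}}X_{j}X_{i}$, one finds $\tilde{X}_{i}\tilde{X}_{j}=q^{\,l_{ij}+u_{i}t_{j}-u_{j}t_{i}}\tilde{X}_{j}\tilde{X}_{i}$, so the candidate quasi-commutation matrix is $\tilde{L}=L-\underline{t}\wedge\underline{u}$; this is the matrix meant in the statement, and it is visibly skew-symmetric and integral. Next I would check that $\underline{\tilde{x}}$ genuinely is a transcendence base and generating set for the skew-field of fractions of the quantum torus with matrix $\tilde{L}$, i.e.\ that the Laurent monomials $\tilde{X}_{1}^{m_{1}}\dotsm\tilde{X}_{r}^{m_{r}}$ are $\mathbb{K}$-linearly independent in $\tilde{\curly{F}}$: such a monomial equals a power of $q$ times $X_{1}^{m_{1}}\dotsm X_{r}^{m_{r}}z^{\underline{m}\cdot\underline{u}}$, where $\underline{m}=(m_{1},\dotsc,m_{r})$, and in $\curly{F}[z^{\pm 1};\tau]=\bigoplus_{j\in\integ}\curly{F}z^{j}$ these are distinguished by their $z$-degree, or within a fixed $z$-degree by the linear independence of distinct monomials of the original quantum torus inside $\curly{F}$. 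Hence the $\tilde{X}_{i}^{\pm 1}$ generate a quantum torus with quasi-commutation matrix $\tilde{L}$ whose skew-field of fractions embeds in $\tilde{\curly{F}}$, so $(\underline{\tilde{x}},\tilde{B},\tilde{L})$ will be a quantum seed as soon as $\tilde{B}$ and $\tilde{L}$ are compatible.

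This compatibility check is the crux of the argument, and the only place where the numerical hypotheses are used. Since $\tilde{B}=B$, I would compute $\transpose{B}\tilde{L}=\transpose{B}L-\transpose{B}(\underline{t}\wedge\underline{u})$: for every mutable index $i$ the $i$th row of $\transpose{B}L$ is $d\,\underline{\boldsymbol{e}}_{i}$ by the $d$-compatibility of $L$ with $B$, and the $i$th row of $\transpose{B}(\underline{t}\wedge\underline{u})$ is $f\,\underline{\boldsymbol{e}}_{i}$ by the assumed $f$-compatibility of $\underline{t}\wedge\underline{u}$ with $B$, so the $i$th row of $\transpose{B}\tilde{L}$ is $(d-f)\,\underline{\boldsymbol{e}}_{i}$. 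Since $0\le f<d$ we have $d-f\ge 1$, so $\tilde{L}$ is $(d-f)$-compatible with $\tilde{B}$ in the strict sense required of a quasi-commutation matrix---the degenerate value $0$ is excluded precisely by $f<d$. Finally $\tilde{G}=G$ satisfies $\tilde{B}_{j}\tilde{G}=B_{j}G=0$ for every mutable $j$ automatically, since $(\underline{x},B,L,G)$ is already a graded seed. It follows that $(\underline{\tilde{x}},\tilde{B},\tilde{L},\tilde{G})$ is a graded quantum seed and that the graded quantum cluster algebra $\tilde{\curly{A}}_{q}^{\underline{t},\underline{u}}$ obtained from it by iterated mutation is a subalgebra of the skew-field of fractions of the quantum torus on $\underline{\tilde{x}}$, hence of $\tilde{\curly{F}}$. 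Apart from the compatibility step, the only point needing care is the linear-independence verification that promotes $\underline{\tilde{x}}$ from an $r$-tuple of quasi-commuting elements to an honest cluster, together with the routine bookkeeping distinguishing $\transpose{B}$ from $\transpose{(B_{\text{mut}})}$ and mutable from frozen indices.
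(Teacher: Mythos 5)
Your argument is correct and follows essentially the same route as the paper: the paper also proves this proposition by first computing $\tilde{L}=L-\underline{t}\wedge\underline{u}$ from the quasi-commutation of the $\tilde{X}_i$, and then verifying $(d-f)$-compatibility of $\tilde{B}=B$ with $\tilde{L}$ by subtracting the two block identities, with $d-f>0$ ensured by $f<d$. The one thing you add beyond what the paper writes down is the explicit check that the Laurent monomials in $\underline{\tilde{x}}$ remain linearly independent in the ambient skew-field of $\curly{A}_q[z^{\pm1};\tau]$ (via the $\integ$-grading by $z$-degree), which the paper leaves implicit; this is a welcome tightening, not a different method.
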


We prove this validity via two lemmas, where we first establish more explicitly the matrix $\tilde{L}$ and then prove that it is compatible with $\tilde{B}=B$.

\begin{lemma}\label{l:Lminuswedge} We have $\tilde{L}=L-\underline{t}\wedge \underline{u}$.
\end{lemma}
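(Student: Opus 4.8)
The plan is to read off $\tilde{L}$ by directly computing the quasi-commutation relations among the rescaled variables $\tilde{X}_{i}=X_{i}z^{u_{i}}$ inside $\curly{A}_{q}[z^{\pm 1};\tau]$. First I would record the two ingredients available: the relations $X_{i}X_{j}=q^{l_{ij}}X_{j}X_{i}$ coming from the quantum torus of $\curly{A}_{q}$, and the defining relation $zX_{i}=\tau(X_{i})z=q^{t_{i}}X_{i}z$ of the skew-Laurent extension, which iterates (for positive and negative exponents alike) to $z^{m}X_{i}=q^{mt_{i}}X_{i}z^{m}$ for all $m\in\integ$; in particular $z^{u_{i}}X_{j}=q^{t_{j}u_{i}}X_{j}z^{u_{i}}$.

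Then I would compute, for a fixed ordered pair $(i,j)$,
\[ \tilde{X}_{i}\tilde{X}_{j}=X_{i}z^{u_{i}}X_{j}z^{u_{j}}=q^{t_{j}u_{i}}X_{i}X_{j}z^{u_{i}+u_{j}}=q^{l_{ij}+t_{j}u_{i}}X_{j}X_{i}z^{u_{i}+u_{j}}, \]
and symmetrically $\tilde{X}_{j}\tilde{X}_{i}=q^{t_{i}u_{j}}X_{j}X_{i}z^{u_{i}+u_{j}}$. Since the $z$-exponents agree, the monomial $X_{j}X_{i}z^{u_{i}+u_{j}}$ is common to both expressions (so in particular $\underline{\tilde{x}}$ does quasi-commute, which it must for the seed to make sense), and dividing one by the other gives $\tilde{X}_{i}\tilde{X}_{j}=q^{l_{ij}+t_{j}u_{i}-t_{i}u_{j}}\tilde{X}_{j}\tilde{X}_{i}$. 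Hence $\tilde{l}_{ij}=l_{ij}-(t_{i}u_{j}-t_{j}u_{i})=l_{ij}-(\underline{t}\wedge\underline{u})_{ij}$, i.e.\ $\tilde{L}=L-\underline{t}\wedge\underline{u}$.

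This argument involves no mutation and makes no use of $B$, $G$, or the $d$- and $f$-compatibility hypotheses; it is purely a statement about the defining quantum torus of the candidate seed, so there is no real obstacle beyond bookkeeping. The one point to watch is the sign convention in the skew-Laurent relation (whether commuting $z$ past $X_{i}$ contributes $q^{+t_{i}}$ or $q^{-t_{i}}$), since this determines the sign in front of $\underline{t}\wedge\underline{u}$; I would cross-check it against the convention $\sigma(X_{i})=q^{c_{i}}X_{i}$ fixed in Proposition~\ref{p:skewLaurent-extra-coeffs}, and note as a sanity check that $l_{ij}+t_{j}u_{i}-t_{i}u_{j}$ is visibly antisymmetric under $i\leftrightarrow j$, so $\tilde{L}$ is indeed a skew-symmetric integer matrix. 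The remaining work---that this $\tilde{L}$ is compatible with $\tilde{B}=B$, and hence that the quadruple $(\underline{\tilde{x}},\tilde{B},\tilde{L},\tilde{G})$ is a genuine graded quantum seed---is what I would defer to the next lemma.
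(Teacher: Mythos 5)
Your computation is correct and is essentially the paper's own proof: both push $z^{u_i}$ past $X_j$ via $zX_i=q^{t_i}X_iz$ and use $X_iX_j=q^{l_{ij}}X_jX_i$ to extract the factor $q^{l_{ij}+t_ju_i-t_iu_j}$, concluding $\tilde{l}_{ij}=l_{ij}-(\underline{t}\wedge\underline{u})_{ij}$. The only cosmetic difference is that the paper carries out one long chain of equalities down to $q^{l_{ij}+t_ju_i-t_iu_j}\tilde{X}_j\tilde{X}_i$ rather than computing both products and comparing, and it explicitly records (as your ``dividing'' step implicitly uses) that $q$ not being a root of unity is what lets one read off $\tilde{l}_{ij}$ from the resulting identity.
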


\begin{proof} Since $\tau(X_{i})=q^{t_{i}}X_{i}$ we have that $zX_{i}=\tau(X_{i})z=q^{t_{i}}X_{i}z$ for all $i$, by the definition of a skew-Laurent extension.  For all $1\leq i,j \leq r$ we have $X_{i}X_{j}=q^{l_{ij}}X_{j}X_{i}$ and hence
\begin{align*} \tilde{X}_{i}\tilde{X}_{j} & = (X_{i}z^{u_{i}})(X_{j}z^{u_{j}}) \\
 & = X_{i}\left( (q^{t_{j}})^{u_{i}}X_{j}z^{u_{i}} \right)z^{u_{j}} \\
 & = q^{t_{j}u_{i}}\left( q^{l_{ij}}X_{j}X_{i} \right) z^{u_{i}+u_{j}} \\
 & = q^{l_{ij}+t_{j}u_{i}}X_{j}\left( X_{i}z^{u_{j}} \right) z^{u_{i}} \\
 & = q^{l_{ij}+t_{j}u_{i}}X_{j}\left( (q^{-t_{i}})^{u_{j}}z^{u_{j}}X_{i} \right)z^{u_{i}} \\
 & = q^{l_{ij}+t_{j}u_{i}-t_{i}u_{j}}(X_{j}z^{u_{j}})(X_{i}z^{u_{i}}) \\
 & = q^{l_{ij}+t_{j}u_{i}-t_{i}u_{j}}\tilde{X}_{j}\tilde{X}_{i}.
\end{align*}
Therefore $\tilde{l}_{ij}=l_{ij}+t_{j}u_{i}-t_{i}u_{j}=l_{ij}-(\underline{t}\wedge \underline{u})_{ij}$.  Note that we need $q$ not a root of unity at this point.
\end{proof}

\begin{lemma}\label{l:recaled-compatibility} The matrices $\tilde{B}$ and $\tilde{L}$ are compatible.
\end{lemma}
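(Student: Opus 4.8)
The plan is to compute $\transpose{(B_{\text{mut}})}\tilde{L}$ directly, using the explicit formula $\tilde{L}=L-\underline{t}\wedge\underline{u}$ supplied by Lemma~\ref{l:Lminuswedge} together with the two compatibility hypotheses built into the statement of the proposition. First I would record that $\tilde{B}=B$ is skew-symmetric by assumption and that $\tilde{L}$ is skew-symmetric, being the difference of the skew-symmetric matrix $L$ and the matrix $\underline{t}\wedge\underline{u}=\transpose{\underline{t}}\underline{u}-\transpose{\underline{u}}\underline{t}$, which is skew-symmetric by its very definition. So the only thing left to verify is that $\transpose{(B_{\text{mut}})}\tilde{L}$ has the required block form: a positive integer multiple of the identity on the mutable columns and zero on the remaining columns.

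By linearity this is then immediate. For any mutable index $i$ and any index $j$,
\[ \left(\transpose{(B_{\text{mut}})}\tilde{L}\right)_{ij} = \left(\transpose{(B_{\text{mut}})}L\right)_{ij} - \left(\transpose{(B_{\text{mut}})}(\underline{t}\wedge\underline{u})\right)_{ij} = d\delta_{ij} - f\delta_{ij} = (d-f)\delta_{ij}, \]
where the second equality uses exactly that $L$ is $d$-compatible with $B$ and that $\underline{t}\wedge\underline{u}$ is $f$-compatible with $B$. Since $0\leq f<d$, the integer $d-f$ is strictly positive, so $\transpose{(B_{\text{mut}})}\tilde{L}$ equals $(d-f)I$ on the mutable columns and $0$ on the non-mutable columns; that is, $\tilde{L}$ is $(d-f)$-compatible with $\tilde{B}=B$, which is precisely the compatibility required of a quasi-commutation matrix and its exchange matrix. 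Combined with the skew-symmetry of $\tilde B$ and $\tilde L$ noted above, this finishes the proof.

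I do not expect a genuine obstacle here; once Lemma~\ref{l:Lminuswedge} is in hand the computation is a one-line piece of linear algebra. The only points requiring any care are the bookkeeping---keeping straight that $\transpose{(B_{\text{mut}})}$ selects exactly the mutable rows, so that the new scalar $d-f$ is read off on precisely the mutable block and the zero block stays zero---and the observation that the hypothesis $f<d$ (rather than merely $f\leq d$) is doing real work: it is exactly what guarantees $d-f\neq 0$, so that $\tilde L$ satisfies the strict form of compatibility permitted for quasi-commutation matrices rather than the degenerate $0$-compatibility, which (as remarked in the excerpt) is allowed for the auxiliary matrices $\underline t\wedge\underline u$ but not for $\tilde L$ itself.
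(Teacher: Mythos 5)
Your proof is correct and follows essentially the same route as the paper's: expand $\transpose{(B_{\text{mut}})}\tilde{L}$ by linearity using $\tilde{L}=L-\underline{t}\wedge\underline{u}$, invoke the $d$- and $f$-compatibility hypotheses to get $(d-f)\delta_{ij}$, and observe that $0\leq f<d$ makes $d-f$ a positive integer. The extra remark on skew-symmetry of $\tilde{L}$ is a harmless (and correct) piece of bookkeeping that the paper leaves implicit.
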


\begin{proof} Let $i$ be a mutable index and $j$ any index.  Then
\begin{align*} \left(\transpose{(\tilde{B}_{\text{mut}})}\tilde{L}\right)_{ij} & = \left(\transpose{(B_{\text{mut}})}(L-(\underline{t}\wedge \underline{u}))\right)_{ij} \\
 & = \left(\transpose{(B_{\text{mut}})}L\right)_{ij}-\left(\transpose{(B_{\text{mut}})}(\underline{t}\wedge \underline{u})\right)_{ij} \\
 & = d\delta_{ij}-f\delta_{ij} \\
 & = (d-f)\delta_{ij}
\end{align*}
since $L$ is $d$-compatible with $B$ and, by assumption, $\underline{t}\wedge \underline{u}$ is $f$-compatible with $B$.  Furthermore, $0\leq f<d$ so we see that $\tilde{L}$ is $(d-f)$-compatible with $B$, with $d-f$ a positive integer as required.
\end{proof}

\noindent We note that the effect of this construction is to leave the initial exchange relations unchanged but to alter the quasi-commutation relations.  That is, this construction can be thought of as a form of twisting of a quantum cluster algebra.

If $\underline{t}\wedge \underline{u}$ is $0$-compatible with $B$, the precise value of $d$ is irrelevant and we always obtain compatibility.  We note some special cases.

\begin{corollary} Let $\curly{A}_{q}$, $\underline{t}$ and $\underline{u}$ be as above.  Then
\begin{enumerate}[label=(\alph*)]
\item if $\underline{t}=0$ we have $\tilde{L}=L$, $f=0$ and hence $\curly{A}_{q}^{0,\underline{u}}\iso \curly{A}_{q}$;
\item if $\underline{u}=0$ we have $\tilde{\underline{x}}=\underline{x}$ and $\tilde{L}=L$ and hence $\curly{A}_{q}^{\underline{t},0}=\curly{A}_{q}$; and
\item if $\underline{t}$ and $\underline{u}$ are \textup{(}$\integ$\textup{-)}linearly dependent we have $\tilde{L}=L$, $f=0$ and hence $\curly{A}_{q}^{\underline{t},\underline{u}}\iso \curly{A}_{q}$.
\end{enumerate}
\end{corollary}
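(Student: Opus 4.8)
The entire statement reduces to Lemma~\ref{l:Lminuswedge}, which gives $\tilde{L}=L-\underline{t}\wedge\underline{u}$, together with the observation that $\underline{t}\wedge\underline{u}=\transpose{\underline{t}}\underline{u}-\transpose{\underline{u}}\underline{t}$ is bilinear and alternating in the pair $(\underline{t},\underline{u})$. So the plan is first to show that $\underline{t}\wedge\underline{u}$ vanishes in each of the three cases, and then to translate $\tilde{L}=L$ into the claimed (iso)morphisms. In case (a), $\underline{t}=0$ gives $\underline{t}\wedge\underline{u}=0$ at once, and likewise in case (b) from $\underline{u}=0$. In case (c), $\integ$-linear dependence of $\underline{t}$ and $\underline{u}$ means $a\underline{t}+b\underline{u}=0$ for integers $a,b$ not both zero; after possibly interchanging the two vectors (using $\underline{u}\wedge\underline{t}=-\underline{t}\wedge\underline{u}$) we may assume $a\neq 0$, so that $a(\underline{t}\wedge\underline{u})=(a\underline{t})\wedge\underline{u}=(-b\underline{u})\wedge\underline{u}=-b(\underline{u}\wedge\underline{u})=0$, whence $\underline{t}\wedge\underline{u}=0$ since matrices over $\integ$ are torsion-free. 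In every case $\tilde{L}=L-\underline{t}\wedge\underline{u}=L$, and as the zero matrix is trivially $0$-compatible with $B$ we may take $f=0$, which satisfies $0\leq f<d$ because $d>0$; thus the hypotheses of the Proposition above hold and $\tilde{\curly{A}}_{q}^{\underline{t},\underline{u}}$ is defined.

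For case (b) the conclusion is then immediate: $\underline{u}=0$ forces $\tilde{X}_{i}=X_{i}z^{0}=X_{i}$, so $\underline{\tilde{x}}=\underline{x}$, and combined with $\tilde{B}=B$, $\tilde{L}=L$ and $\tilde{G}=G$ the graded quantum seeds $(\underline{\tilde{x}},\tilde{B},\tilde{L},\tilde{G})$ and $(\underline{x},B,L,G)$ coincide verbatim, whence $\curly{A}_{q}^{\underline{t},0}=\curly{A}_{q}$. For cases (a) and (c) the initial cluster $\underline{\tilde{x}}=(X_{1}z^{u_{1}},\dotsc,X_{r}z^{u_{r}})$ genuinely differs from $\underline{x}$, but the rest of the seed data is unchanged. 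Here I would argue that the assignment $X_{i}z^{u_{i}}\mapsto X_{i}$ extends to an isomorphism of quantum tori: this is well defined precisely because $\tilde{l}_{ij}=l_{ij}$ for all $i,j$ (by Lemma~\ref{l:Lminuswedge} and the vanishing just established), so the two families of generators satisfy identical quasi-commutation relations. This isomorphism of the ambient skew-fields of fractions carries the initial graded quantum seed of $\tilde{\curly{A}}_{q}^{\underline{t},\underline{u}}$ onto that of $\curly{A}_{q}$; since mutation of a graded quantum seed and the exchange relations are determined entirely by the combinatorial data $(B,L,G)$ (with the toric frame transported along the isomorphism), the full families of quantum cluster variables correspond, giving $\tilde{\curly{A}}_{q}^{\underline{t},\underline{u}}\iso\curly{A}_{q}$ as graded quantum cluster algebras.

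The argument is essentially bookkeeping, and the only point deserving care is the last one: that equality of the matrices $B$, $L$ and $G$ together with an isomorphism of quantum tori identifying the initial clusters automatically upgrades to an isomorphism of quantum cluster algebras, not merely of the underlying algebras. This is built into the framework of Section~\ref{s:gradedQCAs}, where mutation acts on $(\underline{x},B,L,G)$ by rules depending only on $B$, $L$ and $G$, so no real obstacle arises.
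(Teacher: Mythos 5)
Your proof is correct and rests on the same core observation as the paper's, namely that by Lemma~\ref{l:Lminuswedge} one has $\tilde{L}=L-\underline{t}\wedge\underline{u}$ and that the wedge vanishes in each of the three cases, so $\tilde{L}=L$ and $0$-compatibility holds trivially. You improve on the paper's proof in two respects, however. For part (c) the paper only treats the case $\underline{u}=\lambda\underline{t}$ with $\lambda\in\integ$, which is strictly weaker than $\integ$-linear dependence (e.g.\ $\underline{t}=(2)$, $\underline{u}=(3)$ are $\integ$-linearly dependent but neither is an integer multiple of the other); your argument via $a(\underline{t}\wedge\underline{u})=(a\underline{t})\wedge\underline{u}=(-b\underline{u})\wedge\underline{u}=0$ together with torsion-freeness over $\integ$ handles the general case. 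Secondly, the paper dismisses the passage from $\tilde{L}=L$ to $\curly{A}_{q}^{\underline{t},\underline{u}}\iso\curly{A}_{q}$ in parts (a) and (c) with ``the remaining claims follow''; you make this explicit by noting that $X_{i}z^{u_{i}}\mapsto X_{i}$ gives an isomorphism of quantum tori (well-defined because the quasi-commutation matrices agree) which identifies the initial graded quantum seeds and hence, since mutation depends only on $(B,L,G)$, identifies the full quantum cluster algebra structures. Both additions close genuine small gaps rather than change the route, so the overall strategy matches the paper's.
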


\begin{proof} {\ }
\begin{enumerate}[label=(\alph*)]
\item If $\underline{t}=0$ then $\underline{t}\wedge \underline{u}=0$, so that $\underline{t}\wedge \underline{u}$ is $0$-compatible with $B$.  Then the remaining claims follow.  Note that in this case the skew-Laurent extension induced by $\tau$ is a central (Laurent) extension.
\item  If $\underline{u}=0$ then again $\underline{t}\wedge \underline{u}$ is $0$-compatible with $B$ and we see immediately that $\tilde{x}=x$, $\tilde{L}=L$ and hence $\curly{A}_{q}^{\underline{t},0}=\curly{A}_{q}$.  That is, when $\underline{u}=0$ the skew-Laurent extension has no interaction with the subalgebra $\curly{A}_{q}$ (therefore the choice of $\underline{t}$ is irrelevant).
\item If $\underline{u}=\lambda\underline{t}$ for some $\lambda\in \integ$ then as above, $\underline{t}\wedge \underline{u}=0$. \qedhere
\end{enumerate}
\end{proof}

We observe that if $\underline{t}$ and $\underline{u}$ are gradings for $B$, so that $B_{i}\underline{t}=B_{i}\underline{u}=0$ for all mutable indices $i$, then 
\[ B_{i}(\underline{t}\wedge \underline{u})=\transpose{(B_{\text{mut}})}_{i}(\underline{t}\,\transpose{\underline{u}}-\underline{u}\,\transpose{\underline{t}})=0 \]
so that $\underline{t}\wedge \underline{u}$ is $0$-compatible with $B$.

Under certain conditions, the above re-scaling in fact produces a quantum cluster algebra that is a subalgebra of the skew-Laurent extension itself, and not just of the skew-field of fractions of this.  The next result gives just such a set of conditions.  However in it we will need to introduce additional powers of $q$ to our re-scaling.  The previous lemmas, as stated, still hold in this slightly more general setting: the proof of Lemma~\ref{l:Lminuswedge} requires a minor adjustment and Lemma~\ref{l:recaled-compatibility} then goes through verbatim.

\begin{theorem}\label{t:rescaledQCA} Let $\curly{A}_{q}=\curly{A}_{q}(\underline{x}=(X_{1},\ldots,X_{r}),B,L,G)$ be a graded quantum cluster algebra.  Also let $\underline{t}=\transpose{(t_{1},\dotsc ,t_{r})}\in \integ^{r}$ and $\underline{u}=\transpose{(u_{1},\dotsc ,u_{r})}\in \integ^{r}$ be such that $\underline{t}$ and $\underline{u}$ are gradings for $B$, i.e.\ $B_{i}\underline{t}=B_{i}\underline{u}=0$ for every mutable index $i$.

Then the following initial data determine a graded quantum cluster algebra $\tilde{\curly{A}}_{q}^{\underline{t},\underline{u}}=\curly{A}_{q}(\underline{\tilde{x}},\tilde{B},\tilde{L},\tilde{G})$ where
\begin{itemize}
\item $\underline{\tilde{x}}=(q^{\frac{t_{1}u_{1}}{2}}X_{1}z^{u_{1}},\ldots,q^{\frac{t_{r}u_{r}}{2}}X_{r}z^{u_{r}})$,
\item $\tilde{B}=B$,
\item $\tilde{L}=L-\underline{t}\wedge \underline{u}$, and
\item $\tilde{G}=G+\underline{u}$,
\end{itemize}
with $\tilde{\curly{A}}_{q}^{\underline{t},\underline{u}}$ a subalgebra of the skew-Laurent extension $\curly{A}_{q}[z^{\pm 1};\tau]$ of $\curly{A}_{q}$ whose automorphism $\tau$ is induced by $\underline{t}$, i.e.\ $\tau\colon \curly{A}_{q} \to \curly{A}_{q}$ is the algebra automorphism such that for each $1\leq i \leq r$, $\tau(X_{i})=q^{t_{i}}X_{i}$.
\end{theorem}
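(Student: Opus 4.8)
The plan is to show that the claimed data $(\underline{\tilde{x}},\tilde{B},\tilde{L},\tilde{G})$ is a \emph{bona fide} graded quantum seed and that the resulting quantum cluster algebra sits inside the skew-Laurent extension $\curly{A}_{q}[z^{\pm 1};\tau]$, rather than merely inside its skew-field of fractions. Since $\underline{t}$ and $\underline{u}$ are gradings for $B$, the observation immediately preceding the theorem shows $\underline{t}\wedge \underline{u}$ is $0$-compatible with $B$; hence by Lemma~\ref{l:recaled-compatibility} (which applies for any $d>0$, the value being irrelevant in the $0$-compatible case) the pair $\tilde{B}=B$ and $\tilde{L}=L-\underline{t}\wedge \underline{u}$ is compatible, so $(\underline{\tilde{x}},\tilde{B},\tilde{L})$ is a quantum seed. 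The extra scalar factors $q^{t_{i}u_{i}/2}$ do not affect the quantum torus generated by the $\tilde{X}_{i}z^{u_{i}}$, so the quasi-commutation matrix is still $\tilde{L}$ by (the mild adjustment of) Lemma~\ref{l:Lminuswedge}. For the grading condition I would check $\tilde{B}_{j}\tilde{G}=B_{j}(G+\underline{u})=B_{j}G+B_{j}\underline{u}=0$ for every mutable index $j$, using that $G$ is a grading for $B$ and that $\underline{u}$ is a grading for $B$ by hypothesis. Thus $(\underline{\tilde{x}},\tilde{B},\tilde{L},\tilde{G})$ is a graded quantum seed.

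The substantive point — and the main obstacle — is containment in $\curly{A}_{q}[z^{\pm 1};\tau]$ itself. The issue is that mutation of the seed $\underline{\tilde{x}}$ produces Laurent monomials in the $\tilde{X}_i$, which a priori involve negative powers of $z$; one must show these negative powers cancel. I would argue by induction on the number of mutations. The key is to track, for a quantum cluster variable $y$ of $\curly{A}_{q}$ reached by a given mutation sequence, the $z$-exponent of the corresponding variable $\tilde{y}$ of $\tilde{\curly{A}}_{q}^{\underline{t},\underline{u}}$: I claim $\tilde{y}=q^{?}\,y\,z^{\deg_{\underline{u}}(y)}$, where $\deg_{\underline{u}}$ is the $\integ$-grading on $\curly{A}_q$ induced by the grading $\underline{u}$ of $B$ (well-defined by the results of Section~\ref{s:gradedQCAs}, which apply to $\underline u$ just as to $G$). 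Since every quantum cluster variable of $\curly{A}_q$ is \emph{homogeneous} for $\deg_{\underline{u}}$ — this is precisely the payoff of the graded framework — the exponent $\deg_{\underline{u}}(y)$ is a single well-defined integer, and $z^{\deg_{\underline{u}}(y)}$ lies in $\curly{A}_q[z^{\pm 1};\tau]$. (Here it may be that some of these exponents are negative, in which case one should regard $\tilde{\curly A}_q^{\underline t,\underline u}$ as a subalgebra of the skew-\emph{Laurent} extension as stated; if $\underline u$ is chosen non-negative on cluster variables the extension can be taken to be just a skew-polynomial one, but the theorem as stated does not require this.)

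The inductive step is the computation that drives everything. Suppose $\underline{\tilde{x}}=(q^{t_iu_i/2}X_iz^{u_i})_i$ with the $X_i$ the cluster of $\curly{A}_q$; mutating in direction $k$, the new $\tilde X_k'$ is $\tilde M(\underline{b}_k^+)+\tilde M(\underline{b}_k^-)$ where $\tilde M$ is built from $\tilde L$. Expanding $\tilde M(\underline a)$ in terms of $M(\underline a)$ (the exchange monomial for $\curly{A}_q$) I would collect three contributions: the scalar $q^{t_iu_i/2}$ factors, the $z^{u_i}$ factors commuted past the $X_j$'s via $\tau$, and the difference between $\tilde L$ and $L$. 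The crucial cancellations are: first, the $z$-powers accumulate to $z^{\underline a\cdot\underline u}$, and since $\underline u$ is a grading for $B$ we have $\underline{b}_k^+\cdot\underline u=\underline{b}_k^-\cdot\underline u=\deg_{\underline u}(X_k')=\tilde G_k'-G_k$ (by the mutation formula for gradings after Lemma~\ref{l:mutofgradinggivenbyE}), so the common factor $z^{\underline b_k^\pm\cdot\underline u}$ can be pulled out of the sum — exactly as in the proof of Proposition~\ref{p:rescaleQCAbypowersofq}; second, the $q$-power prefactors on the two monomials must be checked to agree, which is where the $\tilde L=L-\underline t\wedge\underline u$ substitution and the extra $q^{t_iu_i/2}$ normalization are designed to conspire so that the quantum exchange relation of $\tilde{\curly A}_q^{\underline t,\underline u}$ reduces to $q^{(\text{common})}$ times that of $\curly A_q$. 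Granting these, $\tilde X_k'=q^{c}X_k'z^{\deg_{\underline u}(X_k')}$ for a suitable scalar $c$, completing the induction and showing $\tilde{\curly A}_q^{\underline t,\underline u}\subseteq \curly A_q[z^{\pm1};\tau]$. The verification that $\tilde G$ mutates compatibly (so that one genuinely has a graded quantum cluster algebra, not just a quantum cluster algebra) follows from Lemma~\ref{l:mutofgrading} applied to $\tilde B=B$ and $\tilde G=G+\underline u$, using linearity of the mutation $G\mapsto \transpose E G$ in $G$.
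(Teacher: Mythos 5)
Your strategy is the same as the paper's: verify that the new quadruple is a valid graded quantum seed (compatibility of $\tilde{B},\tilde{L}$ via the preceding lemmas, $B_{j}\tilde{G}=0$ by linearity), then induct on mutation sequences to show each quantum cluster variable of $\tilde{\curly{A}}_{q}^{\underline{t},\underline{u}}$ factors as a power of $q$ times the corresponding variable of $\curly{A}_{q}$ times a power of $z$. The key fact that $\underline{t}$ and $\underline{u}$ are gradings for $B$ is used exactly as in the paper: $\underline{b}_{k}^{+}\cdot\underline{u}=\underline{b}_{k}^{-}\cdot\underline{u}$ lets a common $z$-power be extracted from the two exchange monomials, and the observation after Lemma~\ref{l:mutofgradinggivenbyE} identifies that power with $u_{k}'$.

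Where you are short is the $q$-power bookkeeping. The inductive hypothesis you actually need is that the cluster entries are \emph{exactly} $q^{t_{i}u_{i}/2}X_{i}z^{u_{i}}$, because that precise normalisation feeds into the exchange monomial $\tilde{M}(\underline{a})$ at the next step; correspondingly the inductive conclusion must be $\tilde{X}_{k}'=q^{t_{k}'u_{k}'/2}X_{k}'z^{u_{k}'}$, not merely ``$q^{c}X_{k}'z^{u_{k}'}$ for some scalar $c$.'' The paper closes this by first proving the explicit identity $\tilde{M}(\underline{a})=q^{\frac{(\underline{a}\cdot\underline{t})(\underline{a}\cdot\underline{u})}{2}}M(\underline{a})z^{\underline{a}\cdot\underline{u}}$ (the derivation is carried out in the Appendix), from which both the agreement of the $q$-prefactors of $\tilde{M}(\underline{b}_{k}^{+})$ and $\tilde{M}(\underline{b}_{k}^{-})$ and their common value $q^{t_{k}'u_{k}'/2}$ are immediate, again because $\underline{t}$ and $\underline{u}$ are gradings so $\underline{b}_{k}^{\pm}\cdot\underline{t}=t_{k}'$ and $\underline{b}_{k}^{\pm}\cdot\underline{u}=u_{k}'$. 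You correctly guess this is where the $q^{t_{i}u_{i}/2}$ factors are ``designed to conspire,'' but leaving the verification undone is the one genuine gap: it is the load-bearing computation that makes the induction close.
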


\begin{proof} Our strategy for this proof will be to consider seeds augmented by the extra data assumed in the theorem and to show that these extended seeds behave appropriately under mutation.  More precisely, consider as initial data the tuple
\[ (\underline{\tilde{x}}=(\tilde{X}_{1},\dotsc ,\tilde{X}_{r}),\tilde{B},\tilde{L},\tilde{G},\underline{t},\underline{u}),\]
where each component is as defined in, and satisfies the conditions of, the statement of the theorem and in particular
\[ \tilde{X}_{i}=q^{\frac{t_{i}u_{i}}{2}}X_{i}z^{u_{i}}. \]
We will call such a tuple a \emph{re-scaled} seed and we first establish that this data is valid for defining a graded quantum cluster algebra.

We observed above that $\underline{t}$ and $\underline{u}$ being gradings implies that $\underline{t}\wedge \underline{u}$ is $0$-compatible and so we apply the lemmas to see that $\tilde{B}$ and $\tilde{L}$ are compatible.  Since $\tilde{B}=B$ and $\tilde{G}=G+\underline{u}$ with both $G$ and $\underline{u}$ being gradings for $B$, the grading condition follows.  That is, the first four components of the re-scaled initial seed are indeed valid data for the construction of a graded quantum cluster algebra.  We note that the choice of $\tilde{G}=G+\underline{u}$ is natural, setting the degree of the re-scaled variable $q^{\frac{t_{i}u_{i}}{2}}X_{i}z^{u_{i}}$ to be the sum of the degree of $X_{i}$ (as described by $G$) and the power of $z$, namely $u_{i}$.

Mutation of re-scaled seeds is defined in the obvious way: the cluster $\tilde{\underline{x}}$ is mutated via the exchange relations determined by $\tilde{B}$ and $\tilde{L}$ as usual, $\tilde{B}$ and $\tilde{L}$ are mutated in the same way as for ungraded quantum cluster algebras via the corresponding matrix $E$ and the three gradings $\tilde{G}$, $\underline{t}$ and $\underline{u}$ are mutated as described in Section~\ref{s:gradedQCAs}, namely by multiplication by $\transpose{E}$.  Then Lemma~\ref{l:mutofgrading} assures us that the mutations of $\underline{t}$ and $\underline{u}$ are gradings for the corresponding mutation of $\tilde{B}$, so we see that the mutation of a re-scaled seed again satisfies the compatibilities and assumptions of the theorem, except that we need to see that the form of the mutated cluster variables is the same as that described above.  That is, we wish to show that $\tilde{X}_{i}^{\prime}=q^{\frac{t_{i}^{\prime}u_{i}^{\prime}}{2}}(X_{i}^{\prime})z^{u_{i}^{\prime}}$.

In order to verify this we argue similarly to the proof of Proposition~\ref{p:rescaleQCAbypowersofq} and first consider the exchange monomials arising from a re-scaled seed, i.e.\ 
\[ \tilde{M}(a_{1},\dotsc ,a_{r}) \defeq q^{\frac{1}{2}\sum_{i<j} a_{i}a_{j}\tilde{l}_{ji}}\tilde{X}_{1}^{a_{1}}\dotsm \tilde{X}_{r}^{a_{r}}. \]  
We have $\tilde{X}_{i}=q^{\frac{t_{i}u_{i}}{2}}X_{i}z^{u_{i}}$ and it is straightforward to verify from the quasi-commutation relations between $z$ and the $X_{i}$ and the equation $\tilde{L}=L-\underline{t}\wedge \underline{u}$ that we have
\[ \tilde{M}(a_{1},\dotsc ,a_{r})=q^{\frac{1}{2}\sum_{i=1}^{r}\sum_{j=1}^{r} a_{i}a_{j}t_{i}u_{j}}M(a_{1},\dotsc ,a_{r})z^{\sum_{i=1}^{r} u_{i}a_{i}}.\]
Details may be found in the Appendix on page~\pageref{appendix}.  Writing $\underline{a}=(a_{1},\dotsc ,a_{r})$, we may reformulate this as
\[ \tilde{M}(\underline{a})=q^{\frac{(\underline{a}\cdot \underline{t})(\underline{a} \cdot \underline{u})}{2}}M(\underline{a})z^{\underline{a}\cdot \underline{u}}.\]

Then for $k$ a mutable index, recalling that $\tilde{B}=B$ we see that mutation in the direction $k$ from the re-scaled seed yields
\begin{align*}
\tilde{X}_{k}^{\prime} & =\tilde{M}(\underline{b}_{k}^{+})+\tilde{M}(\underline{b}_{k}^{-}) \\
 & = q^{\frac{(\underline{b}_{k}^{+}\cdot \underline{t})(\underline{b}_{k}^{+} \cdot \underline{u})}{2}}M(\underline{b}_{k}^{+})z^{\underline{b}_{k}^{+}\cdot \underline{u}}+q^{\frac{(\underline{b}_{k}^{-}\cdot \underline{t})(\underline{b}_{k}^{-} \cdot \underline{u})}{2}}M(\underline{b}_{k}^{-})z^{\underline{b}_{k}^{-}\cdot \underline{u}} \\
 & = q^{\frac{{t}_{k}^{\prime}{u}_{k}^{\prime}}{2}}M(\underline{b}_{k}^{+})z^{{u}_{k}^{\prime}}+q^{\frac{{t}_{k}^{\prime}{u}_{k}^{\prime}}{2}}M(\underline{b}_{k}^{-})z^{{u}_{k}^{\prime}} \\
 & = q^{\frac{{t}_{k}^{\prime}{u}_{k}^{\prime}}{2}}\left(M(\underline{b}_{k}^{+})+M(\underline{b}_{k}^{-})\right)z^{{u}_{k}^{\prime}} \\
 & = q^{\frac{{t}_{k}^{\prime}{u}_{k}^{\prime}}{2}}(X_{k}^{\prime})z^{{u}_{k}^{\prime}} 
\end{align*}
as desired.  Here we have again used the fact that $\underline{b}_{k}^{+}\cdot \underline{v}=\underline{b}_{k}^{-}\cdot \underline{v}$ for any grading $\underline{v}$ for $B$ and the equality of both of these with the $k$th entry of the mutation of $\underline{v}$ (as noted after Lemma~\ref{l:mutofgradinggivenbyE}), as well as the fact that the corresponding exchange relation in the original quantum cluster algebra $\curly{A}_{q}$ is $X_{k}^{\prime}=M(\underline{b}_{k}^{+})+M(\underline{b}_{k}^{-})$.  Note that the power of $z$ occurring is exactly the degree of $\tilde{X}_{k}^{\prime}$ (or equivalently of $X_{k}^{\prime}$) for the grading induced by $\underline{u}$.

That is, mutation of a re-scaled seed produces another re-scaled seed.  Therefore iterated mutation from the re-scaled seed of the statement produces a graded quantum cluster algebra all of whose quantum cluster variables are contained in the skew-Laurent extension $\curly{A}_{q}[z^{\pm 1};\tau]$, i.e.\ no localisation of the latter is required.
\end{proof}

From the proof of this theorem, we see the following.

\begin{corollary}\label{c:formofrescaledQCV} With notation as in the preceding Theorem, there is a bijection $\phi$ between the sets of quantum cluster variables for the quantum cluster algebras $\curly{A}_{q}$ and $\tilde{\curly{A}}_{q}^{\underline{t},\underline{u}}$.  Furthermore, under this bijection, every quantum cluster variable $\tilde{X}$ of the quantum cluster algebra $\tilde{\curly{A}}_{q}^{\underline{t},\underline{u}}$ is of the form $q^{a}Xz^{b}$ with $a,b\in \integ$ and $X=\phi^{-1}(\tilde{X})$ the corresponding quantum cluster variable in $\curly{A}_{q}$. \qed
\end{corollary}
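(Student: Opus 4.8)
The plan is to extract this corollary from the proof of Theorem~\ref{t:rescaledQCA} together with the Berenstein--Zelevinsky fact that the exchange graph of a quantum cluster algebra does not depend on the quasi-commutation matrix. Since $\tilde{B}=B$, the algebras $\curly{A}_{q}$ and $\tilde{\curly{A}}_{q}^{\underline{t},\underline{u}}$ have the same exchange matrix in their initial (re-scaled) seeds, hence the same exchange graph \cite{BZ-QCA}. So there is a canonical identification of the seeds of the two algebras: the seed of $\tilde{\curly{A}}_{q}^{\underline{t},\underline{u}}$ obtained from the initial re-scaled seed by a mutation sequence $\mu$ is declared to correspond to the seed of $\curly{A}_{q}$ obtained from its initial seed by the same $\mu$; matching entries of corresponding clusters then gives a correspondence between the quantum cluster variables of the two algebras, which I want to show is a bijection $\phi$ of the asserted form.

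Next I would read off the form from the proof of Theorem~\ref{t:rescaledQCA}. That computation shows, by induction on the length of $\mu$, that the variable of $\tilde{\curly{A}}_{q}^{\underline{t},\underline{u}}$ in position $i$ of the seed reached by $\mu$ equals $q^{t_{i}^{\prime}u_{i}^{\prime}/2}\,X\,z^{u_{i}^{\prime}}$, where $X$ is the variable of $\curly{A}_{q}$ in the same position and $\underline{t}^{\prime},\underline{u}^{\prime}$ are the mutations of $\underline{t},\underline{u}$ along $\mu$. Because $\underline{t}$ and $\underline{u}$ are gradings for $B$, every quantum cluster variable of $\curly{A}_{q}$ is homogeneous for the two induced $\integ$-gradings (Section~\ref{s:gradedQCAs}), so $t_{i}^{\prime}=\deg_{\underline{t}}(X)$ and $u_{i}^{\prime}=\deg_{\underline{u}}(X)$ depend only on $X$, not on $\mu$ or on the position. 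Hence the rule $\phi(X)=q^{\deg_{\underline{t}}(X)\deg_{\underline{u}}(X)/2}\,X\,z^{\deg_{\underline{u}}(X)}$ is a well-defined map on the set of quantum cluster variables of $\curly{A}_{q}$, and $\phi(X)$ has the stated shape $q^{a}Xz^{b}$ (with $b=\deg_{\underline{u}}(X)$ and $a$ the half-integer above, which lies in $\mathbb{K}$ since $q^{1/2}\in\mathbb{K}$).

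Finally I would check bijectivity. Surjectivity onto the quantum cluster variables of $\tilde{\curly{A}}_{q}^{\underline{t},\underline{u}}$ is immediate, since each such variable is obtained from the initial re-scaled seed by some $\mu$ and is therefore $\phi$ of the corresponding variable of $\curly{A}_{q}$. For injectivity I would recover $X$ from $\phi(X)$: in the skew-Laurent extension $\curly{A}_{q}[z^{\pm 1};\tau]$ every element has a unique expansion $\sum_{n}f_{n}z^{n}$ with $f_{n}\in\curly{A}_{q}$, so from $\phi(X)=q^{a}Xz^{b}$ one reads off $b$ as the single exponent appearing and then $X=q^{-a}f_{b}$. (Equivalently, if $\phi(X_{1})=\phi(X_{2})$ then comparing $z$-powers gives $\deg_{\underline{u}}(X_{1})=\deg_{\underline{u}}(X_{2})$, comparing $\underline{t}$-degrees gives $\deg_{\underline{t}}(X_{1})=\deg_{\underline{t}}(X_{2})$ since scalars carry degree zero, so the attached powers of $q$ agree and $X_{1}=X_{2}$.) Thus $\phi$ is a bijection and $\phi^{-1}(\tilde{X})$ is the corresponding variable of $\curly{A}_{q}$.

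The main obstacle is exactly the well-definedness of $\phi$ as a map on the \emph{set} of quantum cluster variables, rather than merely on seeds or on (seed, position) pairs: a priori the scalar and the $z$-power attached to a variable could depend on which seed it is displayed in. The homogeneity of every quantum cluster variable with respect to the gradings induced by $\underline{t}$ and $\underline{u}$---the content of the graded-seed framework of Section~\ref{s:gradedQCAs}---is what removes this difficulty, and the remaining verifications are the bookkeeping already carried out in the proof of Theorem~\ref{t:rescaledQCA}.
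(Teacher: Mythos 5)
Your argument follows the same route the paper intends: the proof of Theorem~\ref{t:rescaledQCA} already shows, by induction on mutation sequences, that the $i$-th variable of any re-scaled seed has the form $q^{t_i'u_i'/2}X_iz^{u_i'}$, and your appeal to the graded-seed framework to see that $t_i'=\deg_{\underline{t}}(X_i)$ and $u_i'=\deg_{\underline{u}}(X_i)$ are intrinsic to the variable (not to the seed or position) is precisely the well-definedness observation that makes $\phi$ a bijection between \emph{sets} of cluster variables rather than merely between labelled seeds; your injectivity check via the uniqueness of skew-Laurent expansions is likewise the natural one. You also correctly spot a small imprecision in the statement itself: the exponent $a=\deg_{\underline{t}}(X)\deg_{\underline{u}}(X)/2$ is in general a half-integer (as the entries $q^{1/2}[236][123]^{-1}Z$, etc.\ in Figure~\ref{fig:initialseedLoc36-homog-and-rescaled} already illustrate), so the corollary should read $a\in\tfrac{1}{2}\integ$, $b\in\integ$ rather than $a,b\in\integ$.
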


\begin{remark} The quantum cluster algebra structure $\tilde{\curly{A}}_{q}^{\underline{t},\underline{u}}$ from this theorem is a quantum cluster algebra structure on a proper subalgebra of the skew-Laurent extension $\curly{A}_{q}[z^{\pm 1};\tau]$; in general this subalgebra is not $\curly{A}_{q}$.  However we could easily extend this to a quantum cluster algebra structure on the whole skew-Laurent extension by adding $z$ and $z^{-1}$ as coefficients in the same manner as in Proposition~\ref{p:skewLaurent-extra-coeffs}, as the only issue here is the absence of the generators $z$ and $z^{-1}$.
\end{remark}

We note that if $A$ is a $\mathbb{K}$-algebra and $\tau$ an automorphism of $A$ then there is an algebra isomorphism of $\left((A[z_{1}^{\pm 1};\tau])[z_{2}^{\pm 1};\tau]\right)/(z_{1}-z_{2})$ with $A[z_{1}^{\pm 1};\tau]$.  That is, if we make a two-fold skew-Laurent extension of $A$ using first the automorphism $\tau$ and then using $\tau$ extended to $A[z_{1}^{\pm 1};\tau]$ as the identity on $z_{1}^{\pm 1}$, then taking the quotient that identifies the two variables yields an algebra isomorphic to a single extension using $\tau$.

The next lemma observes that one may reverse the above scaling procedure.

\begin{lemma} Let $\curly{A}_{q}=\curly{A}_{q}(\underline{x},B,L,G)$ be a graded quantum cluster algebra and let $\tilde{\curly{A}}_{q}^{\underline{t},\underline{u}} \subseteq \curly{A}_{q}[z^{\pm 1};\tau]$ be the quantum cluster algebra structure obtained from $\curly{A}_{q}$ by the construction of the preceding theorem.  Then $(\tilde{\curly{A}}_{q}^{\underline{t},\underline{u}})^{\underline{t},-\underline{u}}=\curly{A}_{q}$, where the former is viewed as a subalgebra of $\curly{A}_{q}[z^{\pm 1};\tau]$ under the isomorphism described above.
\end{lemma}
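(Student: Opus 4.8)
The plan is to track each ingredient of the seed through the two successive applications of the construction of Theorem~\ref{t:rescaledQCA} and check that, after the two extending variables are identified, every modification is undone. Recall that $\tilde{\curly{A}}_{q}^{\underline{t},\underline{u}}=\curly{A}_{q}(\underline{\tilde{x}},\tilde{B},\tilde{L},\tilde{G})$ has $\tilde{X}_{i}=q^{t_{i}u_{i}/2}X_{i}z^{u_{i}}$, $\tilde{B}=B$, $\tilde{L}=L-\underline{t}\wedge \underline{u}$ and $\tilde{G}=G+\underline{u}$, and sits inside $\curly{A}_{q}[z^{\pm 1};\tau]$ with $\tau(X_{i})=q^{t_{i}}X_{i}$. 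To legitimately apply the construction a second time to $\tilde{\curly{A}}_{q}^{\underline{t},\underline{u}}$ with the pair $(\underline{t},-\underline{u})$, I would first verify its hypotheses: $\underline{t}$ and $-\underline{u}$ are gradings for $\tilde{B}=B$ since $\underline{t}$ and $\underline{u}$ are, and there is an algebra automorphism $\tau'$ of $\tilde{\curly{A}}_{q}^{\underline{t},\underline{u}}$ with $\tau'(\tilde{X}_{i})=q^{t_{i}}\tilde{X}_{i}$. For the latter, extend $\tau$ to $\curly{A}_{q}[z^{\pm 1};\tau]$ by the identity on $z$; the resulting automorphism $\hat{\tau}$ sends $\tilde{X}_{i}=q^{t_{i}u_{i}/2}X_{i}z^{u_{i}}$ to $q^{t_{i}u_{i}/2}q^{t_{i}}X_{i}z^{u_{i}}=q^{t_{i}}\tilde{X}_{i}$, and by Corollary~\ref{c:formofrescaledQCV} every quantum cluster variable of $\tilde{\curly{A}}_{q}^{\underline{t},\underline{u}}$ has the form $q^{a}Xz^{b}$ with $X$ a quantum cluster variable of $\curly{A}_{q}$, on which $\tau$ acts by a power of $q$; hence $\hat{\tau}$ preserves $\tilde{\curly{A}}_{q}^{\underline{t},\underline{u}}$ and its restriction is the desired $\tau'$.

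Next I would apply Theorem~\ref{t:rescaledQCA} to $\tilde{\curly{A}}_{q}^{\underline{t},\underline{u}}$ with parameters $(\underline{t},-\underline{u})$ and a fresh extending variable $z_{2}$. The resulting graded quantum cluster algebra $(\tilde{\curly{A}}_{q}^{\underline{t},\underline{u}})^{\underline{t},-\underline{u}}$ has initial cluster variables $q^{t_{i}(-u_{i})/2}\tilde{X}_{i}z_{2}^{-u_{i}}=q^{-t_{i}u_{i}/2}\bigl(q^{t_{i}u_{i}/2}X_{i}z^{u_{i}}\bigr)z_{2}^{-u_{i}}=X_{i}z^{u_{i}}z_{2}^{-u_{i}}$, exchange matrix $\tilde{B}=B$, quasi-commutation matrix $\tilde{L}-\underline{t}\wedge(-\underline{u})=(L-\underline{t}\wedge \underline{u})+\underline{t}\wedge \underline{u}=L$ by linearity of $\wedge$ in the second slot, and grading $\tilde{G}+(-\underline{u})=(G+\underline{u})-\underline{u}=G$.

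Finally I would invoke the two-fold skew-Laurent extension isomorphism recorded just before the statement: since $\tau'$ is the restriction to $\tilde{\curly{A}}_{q}^{\underline{t},\underline{u}}$ of the extension $\hat{\tau}$ of $\tau$ by the identity on $z$, the double extension occurring here is of exactly the form $\bigl((\curly{A}_{q}[z^{\pm 1};\tau])[z_{2}^{\pm 1};\tau]\bigr)$, and the quotient identifying $z$ with $z_{2}$ is isomorphic to $\curly{A}_{q}[z^{\pm 1};\tau]$. Under this identification the initial seed of $(\tilde{\curly{A}}_{q}^{\underline{t},\underline{u}})^{\underline{t},-\underline{u}}$ becomes $\bigl((X_{i}z^{u_{i}}z^{-u_{i}}=X_{i})_{i},\,B,\,L,\,G\bigr)=(\underline{x},B,L,G)$, the initial seed of $\curly{A}_{q}$; since a graded quantum cluster algebra is determined by an initial seed, the two coincide as subalgebras of $\curly{A}_{q}[z^{\pm 1};\tau]$. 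The one point needing care --- and what I would regard as the main obstacle --- is the identification of the automorphism furnished by $z_{2}$ with the restriction of $\hat{\tau}$, so that the pre-stated double-extension isomorphism applies verbatim; beyond that, the argument is just the linearity of $\wedge$ together with bookkeeping of the explicit formulas for $\underline{\tilde{x}}$, $\tilde{L}$ and $\tilde{G}$.
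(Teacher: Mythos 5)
Your proof is correct and follows essentially the same route as the paper: apply Theorem~\ref{t:rescaledQCA} a second time with parameters $(\underline{t},-\underline{u})$, compute that each piece of the seed data is restored, and invoke the pre-stated two-fold skew-Laurent extension isomorphism to identify the two extending variables. You are somewhat more scrupulous than the paper's own proof in explicitly verifying the hypotheses for the second application (the existence of the restricted automorphism $\tau'$ on $\tilde{\curly{A}}_{q}^{\underline{t},\underline{u}}$), but this is a welcome level of care rather than a different argument.
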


\begin{proof} The quantum cluster algebra $(\tilde{\curly{A}}_{q}^{\underline{t},\underline{u}})^{\underline{t},-\underline{u}}$ is obtained from $\tilde{\curly{A}}_{q}^{\underline{t},\underline{u}}$ by the construction of the theorem.  Using the isomorphism of the quotient of the two-fold extension with the single extension described above we may abuse notation and write $z$ for both extending variables.  We then see that $(\tilde{\curly{A}}_{q}^{\underline{t},\underline{u}})^{\underline{t},-\underline{u}}$ has initial data
\begin{itemize}
\item $\tilde{\tilde{\underline{x}}}=(q^{\frac{-t_{1}u_{1}}{2}}(q^{\frac{t_{1}u_{1}}{2}}X_{1}z^{u_{1}})z^{-u_{1}},\dotsc ,q^{\frac{-t_{r}u_{r}}{2}}(q^{\frac{t_{r}u_{r}}{2}}X_{r}z^{u_{r}})z^{-u_{r}})=(X_{1},\dotsc ,X_{r})=\underline{x}$,
\item $\tilde{\tilde{B}}=B$,
\item $\tilde{\tilde{L}}=(L-\underline{t}\wedge \underline{u})-\underline{t}\wedge (-\underline{u})=L$ and
\item $\tilde{\tilde{G}}=(G+\underline{u})+(-\underline{u})=G$.
\end{itemize}
That is, $(\tilde{\curly{A}}_{q}^{\underline{t},\underline{u}})^{\underline{t},-\underline{u}}$ has the same initial data as $\curly{A}_{q}$ so yields the same quantum cluster algebra.
\end{proof}

This completes our general theory of graded quantum cluster algebras.  Now we turn to our application, the existence of a quantum cluster algebra structure on the quantum Grassmannians.

\section{The quantum cluster algebra structure on quantum matrices}\label{s:KqMatisQCA}

As noted above, the work of Gei\ss, Leclerc and Schr\"{o}er (\cite[Corollary~12.10]{GLS-QuantumPFV}) has given a quantum cluster algebra structure on quantum matrices $\KqMat{k}{j}$.  We use $j$ rather than $n$ here as this is the notation of \cite{GLS-QuantumPFV} and also we will want to consider $\KqGr{k}{n}$ and its relationship with $\KqMat{k}{n-k}$ subsequently; it will simplify the presentation in this section to use $j$ rather than $n-k$.

Our aim is to lift this to a quantum cluster algebra structure on the corresponding quantum Grassmannian $\KqGr{k}{k+j}$, in a similar fashion to \cite[\S 10]{GLS-PFV}, so we record here the initial data provided by the construction in \cite{GLS-QuantumPFV}.  (This section is an expansion of \cite[Example~12.11]{GLS-QuantumPFV}, which describes the case $k=j=3$.)

Let $m=k+j-1$; for comparison with \cite[\S12.4]{GLS-QuantumPFV}, our parameter $m$ is their $n$.  The construction of the quantum cluster algebra structure on $\KqMat{k}{j}$ is via the module category of the preprojective algebra $\Lambda=\Lambda(A_{m})$ associated to the Dynkin diagram $A_{m}$.  For a description of the algebra $\Lambda$ and its representation theory, including Auslander--Reiten quivers for $2\leq m \leq 4$, we refer the reader to \cite{GLS-Semicanonical}.  In what follows, we will state well-known properties of this algebra and its module category without proof.  

We first need to construct the projective modules for $\Lambda$.  A basis for the $i$th projective module $P_{i}$ is given by the set of paths leaving the vertex $i$ (modulo the preprojective relation).  The $r$th Loewy layer of $P_{i}$ consists of the simple modules corresponding to the vertices at the ends of paths of length $r-1$ and so we see that $P_{i}$ has simple socle $i$ and simple top $m-i+1$.  Here we use the common notation of having the vertex labels denote the simple modules for path algebras and quotients of these (see for example \cite{ASS-Book}); we will also use $S_{i}$ for this when this is clearer.  

\vfill
\pagebreak
In general $P_{i}$ has the form of an $i$ by $m-i+1$ rectangle:
\begin{center}
\scriptsize
\scalebox{0.8}{\begin{tikzpicture}[node distance=1cm,on grid]

\node (11) at (0,0) {$m-i+1$}; 
\node (21) [below left=of 11] {$m-i$};
\node (22) [below right=of 11] {$m-i+2$};
\node (31) [below left=of 21] {$m-i-1$};
\node (32) [below left=of 22] {$m-i+1$};
\node (33) [below right=of 22] {$m-i+3$};
\node (41) [below left=of 31] {$\adots$};
\node (42) [below left=of 32] {$\adots$};
\node (43) [below left=of 33] {$\ddots$};
\node (44) [below right=of 33] {$\ddots$};
\node (51) [below left=of 41] {$\adots$};
\node (52) [below left=of 42] {$\adots$};
\node (53) [below left=of 43] {};
\node (54) [below left=of 44] {$\ddots$};
\node (55) [below right=of 44] {$m-1$};
\node (61) [below left=of 51] {$2$};
\node (62) [below left=of 52] {$\adots$};
\node (63) [below left=of 53] {};
\node (64) [below left=of 54] {};
\node (65) [below left=of 55] {$m-2$};
\node (66) [below right=of 55] {$m$};
\node (71) [below left=of 61] {$1$};
\node (72) [below left=of 62] {$3$};
\node (73) [below left=of 63] {};
\node (74) [below left=of 64] {};
\node (75) [below left=of 65] {$\adots$};
\node (76) [below right=of 65] {$m-1$};
\node (81) [below right=of 71] {$2$};
\node (82) [below right=of 72] {$\ddots$};
\node (83) [below right=of 73] {};
\node (84) [below right=of 74] {$\adots$};
\node (85) [below right=of 75] {$\adots$};
\node (91) [below right=of 81] {$\ddots$};
\node (92) [below right=of 82] {$\ddots$};
\node (93) [below right=of 83] {$\adots$};
\node (94) [below right=of 84] {$\adots$};
\node (101) [below right=of 91] {$i-2$};
\node (102) [below right=of 92] {$i$};
\node (103) [below right=of 93] {$i+2$};
\node (111) [below right=of 101] {$i-1$};
\node (112) [below right=of 102] {$i+1$};
\node (121) [below right=of 111] {$i$};

\end{tikzpicture}}
\normalsize
\end{center}
Note that if $k=j$ then $m=2k-1$ is odd and $P_{k}$ is self-dual---the rectangular shape depicted above is then a square.  Hence for $k=j=3$, so $m=5$, the projective modules are

\begin{center}
\scalebox{1}{\begin{tikzpicture}[node distance=0.5cm,on grid]

\node (1-11) at (0,0) {$5$}; 
\node (1-21) [below left=of 1-11] {$4$};
\node (1-31) [below left=of 1-21] {$3$};
\node (1-41) [below left=of 1-31] {$2$};
\node (1-51) [below left=of 1-41] {$1$};
\node (a) [above left=of 1-21] {};
\node (1-label) [above=of a,yshift=0.5cm] {$P_{1}$};

\node (2-11) at (3,0) {$4$}; 
\node (2-21) [below left=of 2-11] {$3$};
\node (2-22) [below right=of 2-11] {$5$};
\node (2-31) [below left=of 2-21] {$2$};
\node (2-32) [below left=of 2-22] {$4$};
\node (2-41) [below left=of 2-31] {$1$};
\node (2-42) [below left=of 2-32] {$3$};
\node (2-52) [below left=of 2-42] {$2$};
\node (b) [above left=of 2-21] {};
\node (2-label) [above=of b,yshift=0.5cm] {$P_{2}$};

\node (3-11) at (6,0) {$3$}; 
\node (3-21) [below left=of 3-11] {$2$};
\node (3-22) [below right=of 3-11] {$4$};
\node (3-31) [below left=of 3-21] {$1$};
\node (3-32) [below left=of 3-22] {$3$};
\node (3-33) [below right=of 3-22] {$5$};
\node (3-42) [below left=of 3-32] {$2$};
\node (3-43) [below left=of 3-33] {$4$};
\node (3-53) [below left=of 3-43] {$3$};
\node (3-label) [above=of 3-11,yshift=0.5cm] {$P_{3}$};

\node (4-11) at (9,0) {$2$}; 
\node (4-21) [below left=of 4-11] {$1$};
\node (4-22) [below right=of 4-11] {$3$};
\node (4-32) [below left=of 4-22] {$2$};
\node (4-33) [below right=of 4-22] {$4$};
\node (4-43) [below left=of 4-33] {$3$};
\node (4-44) [below right=of 4-33] {$5$};
\node (4-54) [below left=of 4-44] {$4$};
\node (d) [above right=of 4-22] {};
\node (4-label) [above=of d,yshift=0.5cm] {$P_{4}$};

\node (5-11) at (12,0) {$1$}; 
\node (5-21) [below right=of 5-11] {$2$};
\node (5-31) [below right=of 5-21] {$3$};
\node (5-41) [below right=of 5-31] {$4$};
\node (5-51) [below right=of 5-41] {$5$};
\node (e) [above right=of 5-21] {};
\node (5-label) [above=of e,yshift=0.5cm] {$P_{5}$};

\end{tikzpicture}}
\end{center}

Now it is well-known that $\KqMat{k}{j}$ is isomorphic to the algebra $U_{q}(\mathfrak{n}(w))$ (also commonly denoted $U_{q}^{+}[w]$) associated to $\mathfrak{g}=\mathfrak{sl}_{m+1}$ where $w$ is the Weyl group word with reduced decomposition \[ w=(s_{j}s_{j-1}\cdots s_{1})(s_{j+1}s_{j}\cdots s_{2})\cdots (s_{m}s_{m-1}\cdots s_{k}). \]  This may be found in \cite{MeriauxCauchon}, for example.  Let \[ \underline{i}=(k,k+1,\ldots,m,k-1,k,\ldots,m-1,\ldots,1,2,\ldots,j) \] be the sequence of indices for the above reduced decomposition for $w$; note that we have chosen the reverse order to that in \cite[\S12.4]{GLS-QuantumPFV}.  It is convenient to render this as a $k\cross j$ matrix $(i_{(\alpha,\beta)})$ with $i_{(\alpha,\beta)}=k-\alpha+\beta$, for $1\leq \alpha \leq k$, $1 \leq \beta \leq j$, \ie
\[ \underline{i} = \begin{pmatrix} k & k+1 & \cdots & m-1 & m \\ k-1 & k & \cdots & m-2 & m-1 \\ \vdots & \vdots & \ddots & \vdots & \vdots \\ 2 & 3 & \cdots & j & j+1 \\ 1 & 2 & \cdots & j-1 & j \end{pmatrix}. \]

There is a natural total order on the set of indices of the matrix $\underline{i}$, given by $(\alpha,\beta)<(\alpha',\beta')$ if and only if $\alpha<\alpha'$ or $(\alpha=\alpha'$ and $\beta<\beta')$; that is, the ordering is lexicographical in each coordinate, taking the first coordinate first.  We extend this to a total order on the set $\{ (\alpha,\beta) \mid 1\leq \alpha \leq k,\ 1\leq \beta \leq j\} \union \{0,kj+1\}$ by $0<(\alpha,\beta)<kj+1$ for all pairs $(\alpha,\beta)$.  For an index $(\alpha,\beta)$, we define 
\begin{eqnarray*}
(\alpha,\beta)^{-} & = & \max \left( \{ 0 \} \union \left\{ (\gamma,\delta)<(\alpha,\beta) \mid i_{(\gamma,\delta)}=i_{(\alpha,\beta)} \right\}\right) \\ & = & \begin{cases} 0 & \text{if}\ \alpha=1\ \text{or}\ \beta=1 \\ (\alpha-1,\beta-1) & \text{otherwise} \end{cases} \\
(\alpha,\beta)^{+} & = & \min\left(\{kj+1\} \union \left\{(\gamma,\delta)>(\alpha,\beta) \mid i_{(\gamma,\delta)}=i_{(\alpha,\beta)} \right\}\right) \\ & = & \begin{cases} kj+1 & \text{if}\ \alpha=k\ \text{or}\ \beta=j \\ (\alpha+1,\beta+1) & \text{otherwise} \end{cases}
\end{eqnarray*}
The frozen indices (that is, those indices that correspond to coefficients in the quantum cluster algebra structure) are exactly the $(\alpha,\beta)$ with $(\alpha,\beta)^{+}=kj+1$, \ie when $\alpha=k$ or $\beta=j$.

The initial seed is constructed from the module category as follows.  The subcategory of $\mathrm{mod}(\Lambda)$ corresponding to the word $w$ above, which we denote by $\curly{C}_{w}$, is the subcategory generated by the projective module $P_{k}$.  Certain quotients of $P_{k}$ give the modules corresponding to the standard generators $X_{ab}$ of $\KqMat{k}{j}$ and an iterated socle construction is used to produce modules in this subcategory that correspond to elements of the initial seed.  More precisely, for each pair $(a,b)$ with $1\leq a \leq k$ and $1\leq b \leq j$, the module $P_{k}$ has a unique quotient $M_{(a,b)}$ whose dimension vector is $\underline{\boldsymbol{e}}_{a}+\underline{\boldsymbol{e}}_{a+1}+\cdots +\underline{\boldsymbol{e}}_{m-b+1}$ and this quotient corresponds to $X_{ab}$.  From the above description of $P_{k}$ we see that these modules correspond to segments of the top edges of the rectangle describing $P_{k}$ that include the top (which is isomorphic to $S_{m-k+1}=S_{j}$).  For $k=j=3$ ($m=5$) we have

\begin{center}
\scalebox{1}{\begin{tikzpicture}[node distance=0.5cm,on grid]

\node (11-3) at (0,0) {$3$}; 
\node (11-label) at (-2,0) {$M_{(3,3)}=$};

\node (12-3) at (4,0) {$3$}; 
\node (12-4) [below right=of 12-3] {$4$};
\node (12-label) at (4-2,0) {$M_{(3,2)}=$};

\node (13-3) at (8,0) {$3$}; 
\node (13-4) [below right=of 13-3] {$4$};
\node (13-5) [below right=of 13-4] {$5$};
\node (13-label) at (8-2,0) {$M_{(3,1)}=$};

\node (21-3) at (0,-2) {$3$}; 
\node (21-2) [below left=of 21-3] {$2$};
\node (21-label) at (0-2,0-2) {$M_{(2,3)}=$};

\node (22-3) at (4,-2) {$3$}; 
\node (22-2) [below left=of 22-3] {$2$};
\node (22-4) [below right=of 22-3] {$4$};
\node (22-label) at (4-2,0-2) {$M_{(2,2)}=$};

\node (23-3) at (8,-2) {$3$}; 
\node (23-2) [below left=of 23-3] {$2$};
\node (23-4) [below right=of 23-3] {$4$};
\node (23-5) [below right=of 23-4] {$5$};
\node (23-label) at (8-2,0-2) {$M_{(2,1)}=$};

\node (31-3) at (0,-4) {$3$}; 
\node (31-2) [below left=of 31-3] {$2$};
\node (31-1) [below left=of 31-2] {$1$};
\node (31-label) at (0-2,0-4) {$M_{(1,3)}=$};

\node (32-3) at (4,-4) {$3$}; 
\node (32-2) [below left=of 32-3] {$2$};
\node (32-4) [below right=of 32-3] {$4$};
\node (32-1) [below left=of 32-2] {$1$};
\node (32-label) at (4-2,0-4) {$M_{(1,2)}=$};

\node (33-3) at (8,-4) {$3$}; 
\node (33-2) [below left=of 33-3] {$2$};
\node (33-4) [below right=of 33-3] {$4$};
\node (33-1) [below left=of 33-2] {$1$};
\node (33-5) [below right=of 33-4] {$5$};
\node (33-label) at (8-2,0-4) {$M_{(1,1)}=$};

\end{tikzpicture}}
\end{center}

To construct the modules corresponding to the initial seed, we need the following construction.  Given a module $W$, we define
\begin{itemize}
\item $\mathrm{soc}_{(l)}(W)\defeq {\displaystyle \sum_{\substack{U\leq W \\ U\iso S_{l}}} U}$ and
\item $\mathrm{soc}_{(l_{1},l_{2},\ldots,l_{s})}(W)\defeq W_{s}$ where the chain of submodules $0\subseteq W_{1} \subseteq W_{2} \subseteq \cdots \subseteq W_{s} \subseteq W$ is such that $W_{p}/W_{p-1} \iso \mathrm{soc}_{(l_{p})}(W/W_{p-1})$.
\end{itemize}
Then for $1\leq s \leq l(w)=kj$, we define $V_{s} \defeq \mathrm{soc}_{(i_{s},i_{s-1},\ldots,i_{1})}(P_{i_{s}})$.  Thus for $k=j=3$ and our choice of reduced expression $\underline{i}$ above, $V_{1}=\mathrm{soc}_{(3)}(P_{3})=\mathrm{soc}(P_{3})=S_{3}$.

Similarly $V_{2}=\mathrm{soc}_{(4,3)}(P_{4})$ is defined by the chain $0\subseteq W_{1} \subseteq W_{2}=V_{2} \subseteq P_{4}$ with $W_{1}=\mathrm{soc}_{(4)}(P_{4})=S_{4}$ and $W_{2}/W_{1}=\mathrm{soc}_{(3)}(P_{4}/W_{1})=S_{3}$; that is, $V_{2}$ has two layers, a simple top and a simple socle isomorphic to $S_{3}$ and $S_{4}$ respectively.  Arranging the modules $V_{s}$ in the same way as we did for the indices $i_{s}$, it is natural to re-number these as $V_{(\alpha,\beta)}$ for $1\leq \alpha \leq k$, $1\leq \beta \leq j$, and we see that the modules corresponding to the initial seed for this case are as follows:

\begin{center}
\scalebox{1}{\begin{tikzpicture}[node distance=0.5cm,on grid]

\node (11-3) at (0,0) {$3$}; 
\node (11-label) at (-2,0) {$V_{(1,1)}=$};

\node (12-3) at (4,0) {$3$}; 
\node (12-4) [below right=of 12-3] {$4$};
\node (12-label) at (4-2,0) {$V_{(1,2)}=$};

\node (13-3) at (8,0) {$3$}; 
\node (13-4) [below right=of 13-3] {$4$};
\node (13-5) [below right=of 13-4] {$5$};
\node (13-label) at (8-2,0) {$V_{(1,3)}=$};

\node (21-3) at (0,-2) {$3$}; 
\node (21-2) [below left=of 21-3] {$2$};
\node (21-label) at (0-2,0-2) {$V_{(2,1)}=$};

\node (22-3) at (4,-2) {$3$}; 
\node (22-2) [below left=of 22-3] {$2$};
\node (22-4) [below right=of 22-3] {$4$};
\node (22-33) [below right=of 22-2] {$3$};
\node (22-label) at (4-2,0-2) {$V_{(2,2)}=$};

\node (23-3) at (8,-2) {$3$}; 
\node (23-2) [below left=of 23-3] {$2$};
\node (23-4) [below right=of 23-3] {$4$};
\node (23-5) [below right=of 23-4] {$5$};
\node (23-33) [below right=of 23-2] {$3$};
\node (23-44) [below right=of 23-33] {$4$};
\node (23-label) at (8-2,0-2) {$V_{(2,3)}=$};

\node (31-3) at (0,-4) {$3$}; 
\node (31-2) [below left=of 31-3] {$2$};
\node (31-1) [below left=of 31-2] {$1$};
\node (31-label) at (0-2,0-4) {$V_{(3,1)}=$};

\node (32-3) at (4,-4) {$3$}; 
\node (32-2) [below left=of 32-3] {$2$};
\node (32-4) [below right=of 32-3] {$4$};
\node (32-1) [below left=of 32-2] {$1$};
\node (32-33) [below right=of 32-2] {$3$};
\node (32-22) [below right=of 32-1] {$2$};
\node (32-label) at (4-2,0-4) {$V_{(3,2)}=$};

\node (33-3) at (8,-4) {$3$}; 
\node (33-2) [below left=of 33-3] {$2$};
\node (33-4) [below right=of 33-3] {$4$};
\node (33-1) [below left=of 33-2] {$1$};
\node (33-5) [below right=of 33-4] {$5$};
\node (33-22) [below right=of 33-1] {$2$};
\node (33-33) [below right=of 33-2] {$3$};
\node (33-44) [below right=of 33-33] {$4$};
\node (33-333) [below right=of 33-22] {$3$};
\node (33-label) at (8-2,0-4) {$V_{(3,3)}=$};

\end{tikzpicture}}
\end{center}

To obtain the element of $\KqMat{k}{j}$ corresponding to the modules $V_{(\alpha,\beta)}$, we note that the construction of the $V_{(\alpha,\beta)}$ is such that $V_{(\alpha,\beta)}/V_{(\alpha,\beta)^{-}}=V_{(\alpha,\beta)}/V_{(\alpha-1,\beta-1)} \iso M_{(k-\alpha+1,n-\beta+1)}$ (the natural indexings of the $V_{(\alpha,\beta)}$ and the $M_{(a,b)}$ are opposed to each other, unfortunately).  Then returning to our running example with $k=j=3$, $V_{(1,1)}/0=M_{(3,3)}$ so $V_{(1,1)}$ corresponds to $X_{33}$.  A module $V_{(\alpha,\beta)}$ need not correspond to a generator: $V_{(2,2)}$ is an extension of $M_{(3,3)}$ by $M_{(2,2)}$ and corresponds to the quantum minor $\minor{23}{23}$.  Similarly $V_{(3,3)}=P_{3}$ is an extension of $V_{(2,2)}$ by $M_{(1,1)}$ and corresponds to the quantum minor $\minor{123}{123}$.  

We may describe the initial cluster coming from this construction, which we will call $\curly{M}(k,j)$, explicitly as follows.

\begin{definition}\label{d:qGLScluster}
For $1\leq r\leq k$ and $1\leq s \leq j$, define the sets
\begin{align*}
R(r,s) & = \{ k-r+1,k-r+2,\ldots ,k-r+s \} \intersection \{1,\ldots,k \} \\
C(r,s) & = \{ j-s+1,j-s+2,\ldots ,j-s+r \} \intersection \{1,\ldots,j \} 
\end{align*}
Then we define $\curly{M}(k,j)=\{ \minor{R(r,s)}{C(r,s)} \mid 1\leq r\leq k, 1\leq s\leq j\}$.  It is natural to give $\curly{M}(k,j)$ as a $k\cross j$ array (as we have for $\underline{i}$), where its $(r,s)$-entry, which we denote $\curly{M}_{kj}(r,s)$, is the quantum minor with row set $R(r,s)$ and column set $C(r,s)$.  Should we need to consider $\curly{M}(k,j)$ as a sequence, its $\left((r-1)j+s\right)$-entry is $\minor{R(r,s)}{C(r,s)}$.
\end{definition} 

\begin{remark}  The above association of modules to minors follows from well-known isomorphisms, such as the isomorphism of $\KqMat{k}{j}$ with $U_{q}(\mathfrak{n}(w))$ for the above $w$ as in \cite{MeriauxCauchon}, and those in the paper \cite{GLS-QuantumPFV}.  We note that for $\mathbb{K}_{q}[SL_{m+1}]$, the generalized quantum minors of \cite{BZ-QCA} are the usual quantum minors (analogous to the fact that the generalized minors of Fomin and Zelevinsky \cite{FZ-DoubleBruhat} coincide with the usual ones for $SL_{m+1}$).  Then the unipotent quantum minors in the paper of Gei\ss, Leclerc and Schr\"{o}er (\cite[\S 5]{GLS-QuantumPFV}) are generalized quantum minors divided by certain principal quantum minors.  Following through the correspondence of these with dual PBW basis elements (in $U_{q}(\mathfrak{n}(w))$) and thence through the isomorphism of M\'{e}riaux and Cauchon (\cite{MeriauxCauchon}), we do indeed obtain the (usual) quantum minors in $\KqMat{k}{j}$.
\end{remark}

The arrows in the exchange quiver for the initial seed are given by the combinatorial data associated to the reduced expression $\underline{i}$.  Following through the definitions in \cite[\S9.4]{GLS-QuantumPFV} in this case yields the following description of these:
\begin{itemize}
\item $(\alpha,\beta) \to (\alpha,\beta+1)$,
\item $(\alpha,\beta) \to (\alpha+1,\beta)$ and
\item $(\alpha,\beta) \to (\alpha-1,\beta-1)$,
\end{itemize}
where an arrow is defined only if both its start and end points are (thus there is no arrow $(1,1)\to (0,0)$, for example) and any arrows between indices for coefficients are suppressed.  We note that these are exactly opposed to the natural inclusion and projection homomorphisms on the corresponding modules.  

The quasi-commutation data is also encoded categorically: indexing by pairs as above, the matrix $L$ has entries
\[ l_{(\alpha,\beta),(\gamma,\delta)}=\dim \mathrm{Hom}_{\Lambda}(V_{(\alpha,\beta)},V_{(\gamma,\delta)})-\dim \mathrm{Hom}_{\Lambda}(V_{(\alpha,\beta)},V_{(\gamma,\delta)}).\]  Alternatively, this data can be obtained combinatorially (\cite[Proposition~10.3]{GLS-QuantumPFV}).  The compatibility of the matrix corresponding to the arrows in the exchange quiver and the quasi-commutation matrix is shown in Proposition~10.1 of \cite{GLS-QuantumPFV}.

Putting this all together, the initial cluster variables and their exchange quiver in $\KqMat{k}{j}$ are as illustrated in Figure~\ref{fig:initialseed}.  In Figure~\ref{fig:initialseed33} we show this for our running example with $k=j=3$.

\begin{figure}[t]
\begin{center}
{\footnotesize
\scalebox{0.7}{\begin{tikzpicture}[node distance=3cm,on grid,>=angle 90]

\node (11) at (0,0) {$(kj)$}; 
\node (12) [right=of 11] {$\left(k(j-1)\right)$};
\node (13) [right=of 12] {$\left(k(j-2)\right)$};
\node (14) [right=of 13] {};
\node (15) [right=of 14] {};
\node (16) [right=of 15] {$(k3)$};
\node (17) [right=of 16] {$(k2)$};
\node (18) [right=of 17,rectangle,draw=black,thick] {$(k1)$};

\node (21) [below=of 11] {$\left((k-1)j\right)$}; 
\node (22) [right=of 21] {$\begin{bmatrix} (j-1)j \\ (k-1)k \end{bmatrix}$};
\node (23) [right=of 22] {$\begin{bmatrix} (j-2)(j-1) \\ (k-1)k \end{bmatrix}$};
\node (24) [right=of 23] {};
\node (25) [right=of 24] {};
\node (26) [right=of 25] {$\begin{bmatrix} 34 \\ (k-1)k \end{bmatrix}$};
\node (27) [right=of 26] {$\begin{bmatrix} 23 \\ (k-1)k \end{bmatrix}$};
\node (28) [right=of 27,rectangle,draw=black,thick] {$\begin{bmatrix} 12 \\ (k-1)k \end{bmatrix}$};

\node (31) [below=of 21] {$\left((k-2)j\right)$}; 
\node (32) [right=of 31] {$\begin{bmatrix} (j-1)j \\ (k-2)(k-1) \end{bmatrix}$};
\node (33) [right=of 32] {$\begin{bmatrix} (j-2)(j-1)j \\ (k-2)(k-1)k \end{bmatrix}$};
\node (34) [right=of 33] {};
\node (35) [right=of 34] {};
\node (36) [right=of 35] {$\begin{bmatrix} 345 \\ (k-2)(k-1)k \end{bmatrix}$};
\node (37) [right=of 36] {$\begin{bmatrix} 234 \\ (k-2)(k-1)k \end{bmatrix}$};
\node (38) [right=of 37,rectangle,draw=black,thick] {$\begin{bmatrix} 123 \\ (k-2)(k-1)k \end{bmatrix}$};

\node (41) [below=of 31] {};
\node (42) [right=of 41] {};
\node (43) [right=of 42] {};
\node (44) [right=of 43] {};
\node (45) [right=of 44] {};
\node (46) [right=of 45] {};
\node (47) [right=of 46] {};
\node (48) [right=of 47] {};

\node (51) [below=of 41] {};
\node (52) [right=of 51] {};
\node (53) [right=of 52] {};
\node (54) [right=of 53] {};
\node (55) [right=of 54] {};
\node (56) [right=of 55] {};
\node (57) [right=of 56] {};
\node (58) [right=of 57] {};

\node (61) [below=of 51] {$(2j)$}; 
\node (62) [right=of 61] {$\begin{bmatrix} (j-1)j \\ 23 \end{bmatrix}$};
\node (63) [right=of 62] {$\begin{bmatrix} (j-2)(j-1)j \\ 234 \end{bmatrix}$};
\node (64) [right=of 63] {};
\node (65) [right=of 64] {};
\node (66) [right=of 65] {$\begin{bmatrix} 34\cdots (k+1) \\ 23\cdots k \end{bmatrix}$};
\node (67) [right=of 66] {$\begin{bmatrix} 23\cdots k \\ 23\cdots k \end{bmatrix}$};
\node (68) [right=of 67,rectangle,draw=black,thick] {$\begin{bmatrix} 12\cdots (k-1) \\ 23\cdots k \end{bmatrix}$};

\node (71) [below=of 61,rectangle,draw=black,thick] {$(1j)$}; 
\node (72) [right=of 71,rectangle,draw=black,thick] {$\begin{bmatrix} (j-1)j \\ 12 \end{bmatrix}$};
\node (73) [right=of 72,rectangle,draw=black,thick] {$\begin{bmatrix} (j-2)(j-1)j \\ 123 \end{bmatrix}$};
\node (74) [right=of 73] {};
\node (75) [right=of 74] {};
\node (76) [right=of 75,rectangle,draw=black,thick] {$\begin{bmatrix} 34\cdots (k+2) \\ 12\cdots k \end{bmatrix}$};
\node (77) [right=of 76,rectangle,draw=black,thick] {$\begin{bmatrix} 23\cdots (k+1) \\ 12\cdots k \end{bmatrix}$};
\node (78) [right=of 77,rectangle,draw=black,thick] {$\begin{bmatrix} 12\cdots k \\ 12\cdots k \end{bmatrix}$};

\draw[semithick,->] (11) to (12);
\draw[semithick,->] (12) to (13);
\draw[semithick,->] (13) to (14);
\draw[semithick,dotted] (14) to (15);
\draw[semithick,->] (15) to (16);
\draw[semithick,->] (16) to (17);
\draw[semithick,->] (17) to (18);

\draw[semithick,->] (21) to (22);
\draw[semithick,->] (22) to (23);
\draw[semithick,->] (23) to (24);
\draw[semithick,dotted] (24) to (25);
\draw[semithick,->] (25) to (26);
\draw[semithick,->] (26) to (27);
\draw[semithick,->] (27) to (28);

\draw[semithick,->] (31) to (32);
\draw[semithick,->] (32) to (33);
\draw[semithick,->] (33) to (34);
\draw[semithick,dotted] (34) to (35);
\draw[semithick,->] (35) to (36);
\draw[semithick,->] (36) to (37);
\draw[semithick,->] (37) to (38);

\draw[semithick,dotted] (48) to (58);

\draw[semithick,->] (61) to (62);
\draw[semithick,->] (62) to (63);
\draw[semithick,->] (63) to (64);
\draw[semithick,dotted] (64) to (65);
\draw[semithick,->] (65) to (66);
\draw[semithick,->] (66) to (67);
\draw[semithick,->] (67) to (68);

\draw[semithick,dotted] (74) to (75);

\draw[semithick,->] (11) to (21);
\draw[semithick,->] (21) to (31);
\draw[semithick,->] (31) to (41);
\draw[semithick,dotted] (41) to (51);
\draw[semithick,->] (51) to (61);
\draw[semithick,->] (61) to (71);

\draw[semithick,->] (12) to (22);
\draw[semithick,->] (22) to (32);
\draw[semithick,->] (32) to (42);
\draw[semithick,dotted] (42) to (52);
\draw[semithick,->] (52) to (62);
\draw[semithick,->] (62) to (72);

\draw[semithick,->] (13) to (23);
\draw[semithick,->] (23) to (33);
\draw[semithick,->] (33) to (43);
\draw[semithick,dotted] (43) to (53);
\draw[semithick,->] (53) to (63);
\draw[semithick,->] (63) to (73);

\draw[semithick,->] (16) to (26);
\draw[semithick,->] (26) to (36);
\draw[semithick,->] (36) to (46);
\draw[semithick,dotted] (46) to (56);
\draw[semithick,->] (56) to (66);
\draw[semithick,->] (66) to (76);

\draw[semithick,->] (17) to (27);
\draw[semithick,->] (27) to (37);
\draw[semithick,->] (37) to (47);
\draw[semithick,dotted] (47) to (57);
\draw[semithick,->] (57) to (67);
\draw[semithick,->] (67) to (77);

\draw[semithick,->] (22) to (11);
\draw[semithick,->] (23) to (12);
\draw[semithick,->] (24) to (13);
\draw[semithick,->] (26) to (15);
\draw[semithick,->] (27) to (16);
\draw[semithick,->] (28) to (17);

\draw[semithick,->] (32) to (21);
\draw[semithick,->] (33) to (22);
\draw[semithick,->] (34) to (23);
\draw[semithick,->] (36) to (25);
\draw[semithick,->] (37) to (26);
\draw[semithick,->] (38) to (27);

\draw[semithick,->] (42) to (31);
\draw[semithick,->] (43) to (32);
\draw[semithick,->] (44) to (33);
\draw[semithick,->] (47) to (36);
\draw[semithick,->] (48) to (37);

\draw[semithick,->] (62) to (51);
\draw[semithick,->] (63) to (52);
\draw[semithick,->] (66) to (55);
\draw[semithick,->] (67) to (56);
\draw[semithick,->] (68) to (57);

\draw[semithick,->] (72) to (61);
\draw[semithick,->] (73) to (62);
\draw[semithick,->] (74) to (63);
\draw[semithick,->] (76) to (65);
\draw[semithick,->] (77) to (66);
\draw[semithick,->] (78) to (67);

\end{tikzpicture}}
}
\end{center}
\caption{Initial cluster for a quantum cluster algebra structure on $\KqMat{k}{j}$.\label{fig:initialseed}}
\end{figure}
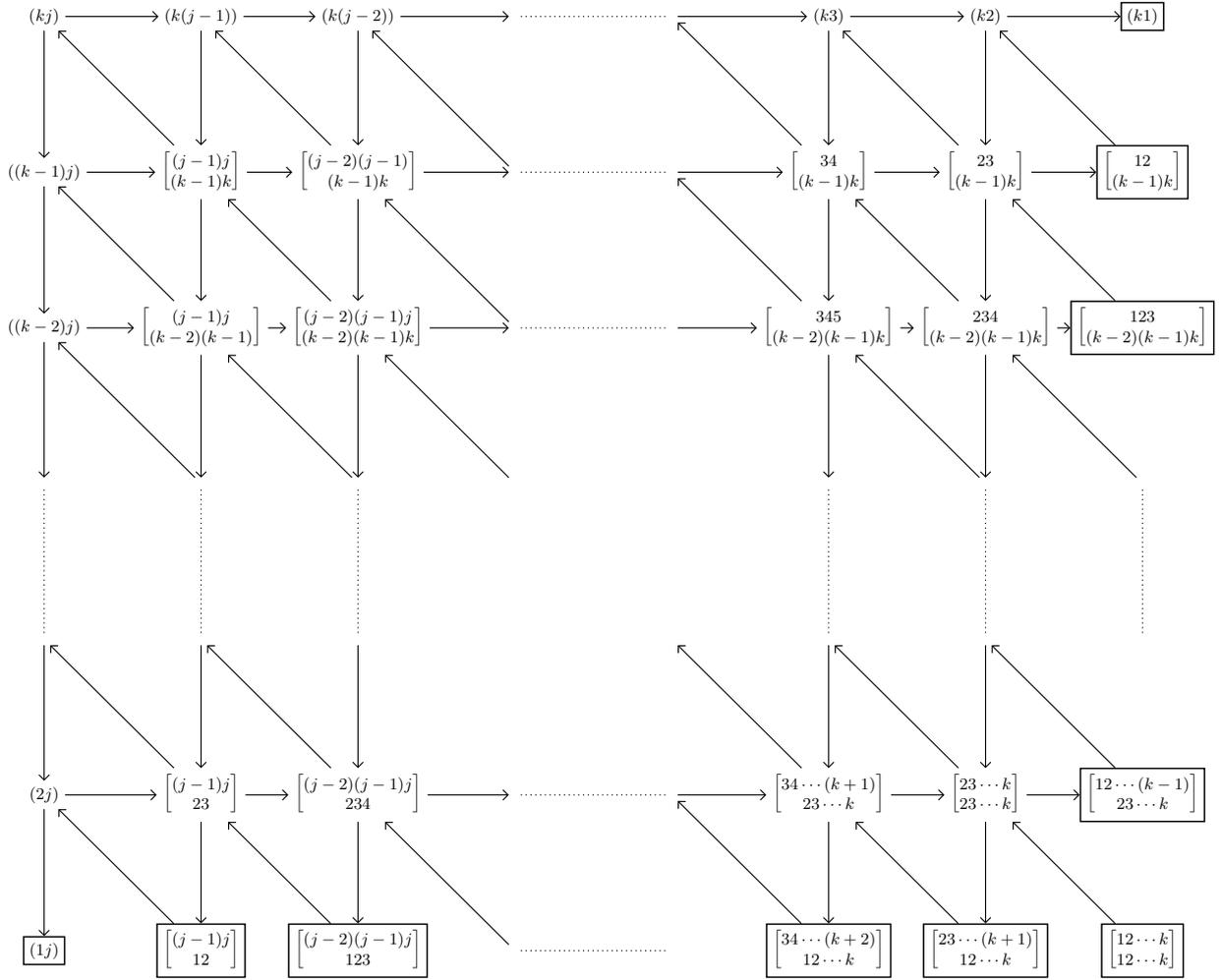

\begin{figure}[t]
\begin{center}
{\footnotesize
\scalebox{1}{\begin{tikzpicture}[node distance=2cm,on grid,>=angle 90]

\node (11) at (0,0) {$(33)$}; 
\node (12) [right=of 11] {$(32)$};
\node (13) [right=of 12,rectangle,draw=black,thick] {$(31)$};

\node (21) [below=of 11] {$(23)$}; 
\node (22) [right=of 21] {$\begin{bmatrix} 23 \\ 23 \end{bmatrix}$};
\node (23) [right=of 22,rectangle,draw=black,thick] {$\begin{bmatrix} 12 \\ 23 \end{bmatrix}$};

\node (31) [below=of 21,rectangle,draw=black,thick] {$(13)$}; 
\node (32) [right=of 31,rectangle,draw=black,thick] {$\begin{bmatrix} 23 \\ 12 \end{bmatrix}$};
\node (33) [right=of 32,rectangle,draw=black,thick] {$\begin{bmatrix} 123 \\ 123 \end{bmatrix}$};

\draw[semithick,->] (11) to (12);
\draw[semithick,->] (12) to (13);

\draw[semithick,->] (21) to (22);
\draw[semithick,->] (22) to (23);

\draw[semithick,->] (11) to (21);
\draw[semithick,->] (21) to (31);

\draw[semithick,->] (12) to (22);
\draw[semithick,->] (22) to (32);

\draw[semithick,->] (22) to (11);
\draw[semithick,->] (23) to (12);

\draw[semithick,->] (32) to (21);
\draw[semithick,->] (33) to (22);

\end{tikzpicture}}
}
\end{center}
\caption{\label{fig:initialseed33}Initial cluster for a quantum cluster algebra structure on $\KqMat{3}{3}$.}
\end{figure}

As is usual, vertices of the quiver that are frozen, \ie corresponding to elements that are coefficients and so not mutated, are boxed (a so-called ice quiver).  We do not record here explicitly the quasi-commutation matrix.  We denote this initial data for the quantum cluster algebra structure on $\KqMat{k}{j}$ as $(\curly{M}(k,j),B(k,j),L(k,j))$, where $\curly{M}(k,j)$ is the initial cluster as above and $B$ and $L$ are the exchange and quasi-commutation matrices.

Then the main theorem of \cite{GLS-QuantumPFV}, Theorem~12.3, tells us that in this case, with the above initial data, $\KqMat{k}{j}$ is a quantum cluster algebra.  We note particularly Corollary~12.4 of \cite{GLS-QuantumPFV} which says that every relevant unipotent quantum minor occurs as a quantum cluster variable in this quantum cluster algebra structure.  That is, in our particular case, every quantum minor in $\KqMat{k}{j}$ does indeed occur as a quantum cluster variable.  Of course, outside the finite-type cases, we must have quantum cluster variables that are not quantum minors; we will say a little more about these below.

We observe that this quantum cluster algebra structure can be considered as a graded quantum cluster algebra structure, with respect to the natural choice of grading.  We have that $\KqMat{k}{j}$ is an $\nat$-graded algebra when we put all the matrix generators $X_{ij}$ in degree 1.  Indeed, our choice of initial seed consists of homogeneous elements for this grading, as follows.

\begin{lemma}\label{lemma:degofMrs} We have $\card{R(r,s)}=\card{C(r,s)}=\min(r,s)$, and so $\mathrm{deg}(\curly{M}_{kj}(r,s))=\min(r,s)$. \qed
\end{lemma}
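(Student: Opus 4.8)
The plan is entirely elementary: compute $\card{R(r,s)}$ and $\card{C(r,s)}$ directly from their definitions as intersections of integer intervals, and then invoke the defining formula for a quantum minor to read off the degree. No categorical input is needed here.

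First I would analyse $R(r,s) = \{k-r+1,\ldots,k-r+s\}\intersection\{1,\ldots,k\}$. The interval $\{k-r+1,\ldots,k-r+s\}$ contains exactly $s$ integers, and since $1\leq r\leq k$ its least element $k-r+1$ is already at least $1$; hence intersecting with $\{1,\ldots,k\}$ can only delete elements from the top end. That truncation occurs precisely when $k-r+s>k$, i.e.\ when $s>r$, in which case $R(r,s)=\{k-r+1,\ldots,k\}$ has $r$ elements; otherwise $R(r,s)$ is the whole interval, with $s$ elements. Either way $\card{R(r,s)}=\min(r,s)$. Symmetrically, for $C(r,s)=\{j-s+1,\ldots,j-s+r\}\intersection\{1,\ldots,j\}$ the interval has $r$ elements, its least element $j-s+1\geq 1$ because $s\leq j$, and the intersection truncates from above exactly when $j-s+r>j$, i.e.\ $r>s$, leaving $C(r,s)=\{j-s+1,\ldots,j\}$ with $s$ elements; otherwise $C(r,s)$ has $r$ elements. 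Thus $\card{C(r,s)}=\min(r,s)$ as well, and in particular $\card{R(r,s)}=\card{C(r,s)}$, as is needed for the minor to make sense.

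Finally, writing $p=\min(r,s)$, the quantum minor $\curly{M}_{kj}(r,s)=\minor{R(r,s)}{C(r,s)}$ is by definition a $\mathbb{K}$-linear combination of monomials, each a product of exactly $p$ generators $X_{ij}$. Since every $X_{ij}$ has degree $1$ for the grading on $\KqMat{k}{j}$, each such monomial is homogeneous of degree $p$, hence so is the minor, giving $\mathrm{deg}(\curly{M}_{kj}(r,s))=\min(r,s)$.

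There is really no obstacle to overcome; the only point requiring a moment's care is verifying that the lower endpoints of the two intervals lie in the admissible ranges, so that passing to the intersection only removes elements from the upper end. This uses $r\leq k$ for $R(r,s)$ and $s\leq j$ for $C(r,s)$, both of which hold by hypothesis on $(r,s)$.
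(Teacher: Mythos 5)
Your proof is correct and supplies exactly the elementary verification the paper treats as immediate (the lemma is stated with \qed and no written proof). The interval-length counting, the observation that $r\leq k$ and $s\leq j$ guarantee the lower endpoints are in range so truncation can only happen at the top, and the final step that a $p\times p$ quantum minor is a sum of degree-$p$ monomials in the $X_{ij}$, are all precisely what one would write out if the authors had chosen to include a proof.
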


So we set $G(k,j)$ to be the vector whose $(r,s)$-entry is equal to $\min(r,s)$.  Furthermore, we see in the next lemma that the exchange quiver satisfies the required homogeneity property with respect to this grading.

\begin{lemma}\label{lemma:degsumsequal} At any mutable vertex $(\alpha,\beta)$,  \[ \sum_{(\gamma,\delta)\to (\alpha,\beta)} \deg(\curly{M}_{kj}(\gamma,\delta))=\sum_{(\alpha,\beta)\to (\gamma,\delta)} \deg(\curly{M}_{kj}(\gamma,\delta)). \]
\end{lemma}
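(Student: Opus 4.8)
The plan is to reduce the statement to an elementary identity among values of the function $\min$ and then verify that identity directly.

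First I would enumerate the arrows incident to a fixed mutable vertex $(\alpha,\beta)$, using the description of the exchange quiver recorded just above the lemma. Since $(\alpha,\beta)$ is mutable we have $1\leq\alpha\leq k-1$ and $1\leq\beta\leq j-1$, so the three out-arrows target $(\alpha,\beta+1)$, $(\alpha+1,\beta)$ and $(\alpha-1,\beta-1)$, while the three in-arrows originate from $(\alpha,\beta-1)$, $(\alpha-1,\beta)$ and $(\alpha+1,\beta+1)$. I would make two bookkeeping observations: no arrow incident to $(\alpha,\beta)$ is ever suppressed for being an arrow between frozen vertices, precisely because $(\alpha,\beta)$ itself is mutable; and the only arrows that can fail to exist are those whose other endpoint leaves the grid $\{1,\dots,k\}\times\{1,\dots,j\}$, which under the constraints on $\alpha,\beta$ can only be the out-arrow to $(\alpha-1,\beta-1)$ and the in-arrows from $(\alpha,\beta-1)$ and $(\alpha-1,\beta)$, and only in the boundary cases $\alpha=1$ or $\beta=1$.

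Next, invoking Lemma~\ref{lemma:degofMrs} to replace each $\deg(\curly{M}_{kj}(r,s))$ by $\min(r,s)$, I would record the key simplification: whenever one of the boundary arrows above is absent, its putative contribution would be $\min(r,s)$ with $r=0$ or $s=0$, hence $0$, since all coordinates in sight are non-negative. Thus no harm is done by formally retaining the missing terms with value $0$, and the asserted equality becomes the single uniform identity
\[ \min(\alpha,\beta-1)+\min(\alpha-1,\beta)+\min(\alpha+1,\beta+1) = \min(\alpha,\beta+1)+\min(\alpha+1,\beta)+\min(\alpha-1,\beta-1), \]
to be checked for all integers $\alpha,\beta\geq 1$.

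Finally I would prove this identity via the formula $\min(x,y)=\tfrac12\bigl(x+y-|x-y|\bigr)$: expanding, both sides collapse to $\tfrac12\bigl(3\alpha+3\beta-|\alpha-\beta+1|-|\alpha-\beta|-|\alpha-\beta-1|\bigr)$, because the pairs of linear terms match and the absolute-value terms that occur on each side are exactly $|\alpha-\beta+1|$, $|\alpha-\beta|$ and $|\alpha-\beta-1|$. (Alternatively, one may split into the cases $\alpha<\beta$, $\alpha=\beta$ and $\alpha>\beta$ and evaluate each $\min$ explicitly; the computation is equally short, and the identity is manifestly symmetric in $\alpha$ and $\beta$.) The only place requiring genuine care is the arrow bookkeeping of the first paragraph, and in particular the vanishing of the boundary contributions; once that is in place the remaining algebra is immediate, so I do not anticipate a real obstacle.
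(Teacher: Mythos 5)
Your proof is correct, and it takes a genuinely different route from the paper's. The paper proves Lemma~\ref{lemma:degsumsequal} by an exhaustive case analysis: it treats $\alpha=\beta=1$, then $\alpha=1$, $\beta>1$ (and the symmetric case), then $\alpha,\beta>1$, the last further split into $\alpha=\beta$, $\alpha<\beta$, $\alpha>\beta$, with the common value of the two sums computed explicitly in each subcase. You instead observe that any arrow missing because its far endpoint leaves the grid would formally contribute $\min(r,s)$ with $r=0$ or $s=0$, i.e.\ zero, so all boundary cases can be absorbed into a single identity
\[
\min(\alpha,\beta-1)+\min(\alpha-1,\beta)+\min(\alpha+1,\beta+1)=\min(\alpha,\beta+1)+\min(\alpha+1,\beta)+\min(\alpha-1,\beta-1),
\]
valid for all $\alpha,\beta\geq 1$, which you then verify in one line via $\min(x,y)=\tfrac12(x+y-|x-y|)$ (both sides collapse to $\tfrac12\bigl(3\alpha+3\beta-|\alpha-\beta+1|-|\alpha-\beta|-|\alpha-\beta-1|\bigr)$). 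Your arrow bookkeeping is accurate: since $(\alpha,\beta)$ is mutable you have $1\leq\alpha\leq k-1$, $1\leq\beta\leq j-1$, so only the three neighbours with a coordinate decreased can leave the grid, and no incident arrow is suppressed for joining two frozen vertices. What your approach buys is a shorter, case-free argument and a cleaner explanation of why the boundary situations are not special; what the paper's version buys is that it records explicitly the common value of the two sums in each regime, which is not needed for the lemma but does make the structure of the grading visibly rectilinear.
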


\begin{proof}
For $\alpha=\beta=1$, we see that the two sums are equal to $2$.  

Next assume that $\alpha=1$ and $\beta>1$.  Then the vertices with arrows incoming to $(1,\beta)$ are $(1,\beta-1)$ and $(2,\beta+1)$, and the vertices with arrows outgoing from $(1,\beta)$ are $(1,\beta+1)$ and $(2,\beta)$.  Since $\deg(\curly{M}_{kj}(r,s))=\min(r,s)$, we see that the two sums are both equal to $1+2=3$, as $\beta>1$.  Similarly the sums are equal (and equal to $3$) if $\alpha>1$ and $\beta=1$.

If $\alpha,\beta >1$, then $(\alpha,\beta)$ has six neighbours: $(\alpha,\beta-1)$, $(\alpha-1,\beta)$ and $(\alpha+1,\beta+1)$ with incoming arrows, and $(\alpha-1,\beta-1)$, $(\alpha,\beta+1)$ and $(\alpha+1,\beta)$ outgoing.  Then if $\alpha=\beta$, the two sums are easily seen to be equal to $3\alpha-1$, or if $\alpha<\beta$ the sums are equal to $3\alpha$, or if $\beta<\alpha$ they are equal to $3\beta$.
\end{proof}

Then by our earlier discussion, the quantum cluster algebra associated to the initial quantum seed $(\curly{M}(k,j),B(k,j),L(k,j),G(k,j))$ is $\integ$-graded and in particular every quantum cluster variable is homogeneous with respect to this grading.  Note that \emph{a priori} we only deduce a $\integ$-grading.

\vfill
\pagebreak
This grading also has a categorical interpretation.  As described in \S 9.6 of \cite{GLS-QuantumPFV}, drawing on \cite[{\S 10}]{GLS-KacMoody}, every module $X$ in $\curly{C}_{w}$ has a filtration
\[ 0=X_{0} \subseteq X_{1} \subseteq X_{2} \subseteq \cdots \subseteq X_{r}=X \] such that each subquotient $X_{i}/X_{i-1}$ is isomorphic to $M_{i}^{m_{i}}$ where $M_{i}$ is the module $M_{(k-\alpha+1,j-\beta+1)}$ corresponding to $V_{i}=V_{(\alpha,\beta)}$ (where $V_{k}$ was our original numbering for the $V$'s, coming from $\underline{i}$).  Hence each module in $\curly{C}_{w}$ has an $M$-dimension vector, $\underline{m}(X)=(m_{1},\dotsc ,m_{r})\in \nat^{r}$.  

The general theory tells us that the modules $V_{(\alpha,\beta)}$ can be considered as being built up by repeated extensions of the modules $M_{(a,b)}$ (corresponding to the algebraic identification of $V_{(\alpha,\beta)}$ being a minor and thus a product of the matrix generators, to which the $M_{(a,b)}$ correspond).  In the case at hand, we see that the minor corresponding to $V_{(\alpha,\beta)}$ is an $l\cross l$ minor exactly when $V_{(\alpha,\beta)}$ has $\sum_{i=1}^{r} \underline{m}(V_{(\alpha,\beta)})_{i}=l$, i.e. when $V_{(\alpha,\beta)}$ has $l$ non-zero subquotients of the form described in the previous paragraph.  We call this the sum of the entries of the $M$-dimension vector the $M$-dimension of the module.

We see that the initial exchange matrix (or quiver) has the necessary property to imply that this gives a grading by looking at the explicit description of the arrows.  For example, the arrow $(\alpha,\beta)\to (\alpha-1,\beta-1)$ exactly corresponds to the inclusion homomorphism $V_{(\alpha-1,\beta-1)}\to V_{(\alpha,\beta)}$ for which $M_{(k-\alpha+1,j-\beta+1)}$ is the cokernel, thus $V_{(\alpha,\beta)}$ has $M$-dimension one greater than that of $V_{(\alpha-1,\beta-1)}$.  One see that in the grid arrangement, $M$-dimension is constant along rows and increases by one on going down a row.  Away from the boundary, every mutable vertex has the same number of arrows coming in from a given row as going out to it (either zero or one of each, in fact) and so the two sums of $M$-dimensions over arrows entering or leaving the vertex are equal.  It is straightforward to check that the boundary cases also have the required property.

Indeed the fact that every module in $\curly{C}_{w}$ has a filtration with subquotients the modules $M_{(a,b)}$ makes it clear that this grading is the usual $\nat$-grading on $\KqMat{k}{j}$, for we have these modules $M_{(a,b)}$ in degree one as for the matrix generators.  Thus we can view the above statement as saying that the quantum cluster algebra structure is compatible with the natural graded algebra structure of $\KqMat{k}{j}$.  This will be important for us later.  Again, we see that this is a property of the category that does not rely on being in the quantum case: this grading is present whether one considers $\curly{C}_{w}$ to be categorifying the commutative or the quantum coordinate ring.

Finally,\label{page:theta-start} we note one more grading-like datum associated to the category $\curly{C}_{w}$.  Namely, following \cite[\S 10]{GLS-PFV}, to each module $M$ in $\curly{C}_{w}$ we may associate the natural number given by $\theta(M)=\dim \mathrm{Hom}_{\Lambda}(M,S_{j})$, where $j=m-k+1$.  Then $\theta(M)$ is the multiplicity of $S_{k}$ in the top of the module $M$ and we see from the above that $\theta(V_{(\alpha,\beta)})=\theta(M_{(\alpha,\beta)})=1$ for all $1\leq \alpha \leq k$, $1\leq \beta \leq j$.  However $\theta$ is not always equal to 1: in the example of $\KqMat{3}{3}$, mutating $V_{(2,2)}$ yields a module $W$ with $\theta(W)=2$.

This data has the property that it is compatible with mutation, in the following sense: if $M^{\prime}$ is the module obtained by mutating $M$, so that there exist two exact sequences
\[ 0 \to M \to U \to M^{\prime} \to 0 \qquad \text{and} \qquad 0 \to M^{\prime} \to W \to M \to 0 \]
where $U$ and $W$ correspond to the exchange monomials, then
\[ \dim \mathrm{Hom}_{\Lambda}(M^{\prime},S_{j})=\max \{ \dim \mathrm{Hom}_{\Lambda}(U,S_{j}), \dim \mathrm{Hom}_{\Lambda}(W,S_{j})\}-\dim \mathrm{Hom}_{\Lambda}(M,S_{j}). \]  

\begin{remark} An analogue of this formula is stated as \cite[Proposition~10.1]{GLS-PFV}, for socles as opposed to tops.  The paper \cite{GLS-PFV}, in which the classical version of the topic of this paper is considered, works with a category of submodules whereas the quantum version in \cite{GLS-QuantumPFV} uses a category of factor modules.  Consequently, to fit with \cite{GLS-QuantumPFV} we need to look at tops here as opposed to socles.
\end{remark}

More compactly, in the above notation, $\theta(M^{\prime})=\max\{ \theta(U),\theta(W) \}-\theta(M)$.  That is, given the values of $\theta$ on a collection of modules associated to an initial cluster, one may calculate the values on all modules associated to cluster variables.  Notice that because $\theta$ is a dimension, it necessarily takes natural number values; that the formula $\theta(M^{\prime})=\max\{ \theta(U),\theta(W) \}-\theta(M)$ produces this is not \emph{a priori} clear.

Note also that this data is not a grading for the (quantum) cluster algebra structure above.  Indeed, at the vertex indexed by $(1,1)$ we have two outgoing arrows to modules each of which has a 1-dimensional top but only one incoming arrow, from a module that also has a 1-dimensional top.  That is, $\theta(U)\neq \theta(W)$ in this case, although the formula does tell us that the mutated module also has a 1-dimensional top.  At all other mutable vertices for the cluster $\curly{M}(k,j)$ we do have homogeneity with respect to this function $\theta$, however.\label{page:theta-end}

\section{The dehomogenisation isomorphism and the image of the cluster structure under this}\label{s:dehomog}

In work of Kelly, Lenagan and Rigal (\cite{KellyLenaganRigal}), a noncommutative dehomogenisation of an $\nat$-graded algebra is defined and their Corollary~4.1 describes an isomorphism of the localisation of the quantum Grassmannian at the minor $[(n-k+1)\dotsm n]$ with a skew-Laurent extension of a quantum matrix algebra.  In \cite{LenaganRussell}, a dehomogenisation isomorphism $\rho$ involving $\KqGr{k}{n}$ localised at the consecutive minor $[\widetilde{a}\ (\widetilde{a+1})\ \cdots\ (\widetilde{a+k-1})]$ is constructed, where ``$\,\widetilde{\ \ }\,$'' indicates that values are taken modulo $n$ and from the set $\{1,\ldots,n\}$.  In order to match conventions already fixed, we will need the map corresponding to the special case of the map $\rho$ of \cite{LenaganRussell} for the value $a=1$, the original work of \cite{KellyLenaganRigal} being the case $a=n-k+1$.

This map is key to the lifting procedure to obtain the quantum cluster algebra structure on the quantum Grassmannian, and we recall its definition.  (Here, $\widehat{a}$ denotes an omitted index.)

\begin{proposition}[\cite{LenaganRussell}]\label{p:dhom} Let $\sigma$ be the automorphism of $\KqMat{k}{n-k}$ defined by $\sigma(X_{ij})=qX_{ij}$.  The map
\[ \alpha\colon \KqMat{k}{n-k}[Y^{\pm 1}; \sigma] \to \KqGr{k}{n}\!\left[ [12\cdots k]^{-1} \right] \]
defined by
\[ \alpha(X_{ij}) = [1\cdots \widehat{k-i+1} \cdots k\ (j+k)][1\cdots k]^{-1}, \quad \alpha(Y) =[12\cdots k] \]
is an algebra isomorphism. \qed
\end{proposition}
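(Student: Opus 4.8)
The result is due to Lenagan and Russell (\cite{LenaganRussell}), building on Kelly--Lenagan--Rigal (\cite{KellyLenaganRigal}), so I only sketch the shape of the argument one would give. The plan has three stages: (i) check that $\alpha$ is a well-defined algebra homomorphism; (ii) check that $\alpha$ is surjective; (iii) deduce that $\alpha$ is injective, and hence an isomorphism, from a GK-dimension count. Before any of this one observes that the codomain is meaningful: the quantum minor $[12\cdots k]$ is a normal element of $\KqGr{k}{n}$, since it quasi-commutes with every quantum Pl\"ucker coordinate up to an integer power of $q$; hence $\{[12\cdots k]^{m}\mid m\in\nat\}$ is an Ore set, the localisation $\KqGr{k}{n}[[12\cdots k]^{-1}]$ exists, and $[12\cdots k]$ is a unit in it.

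\textbf{Stage (i).} To extend $\alpha$ to an algebra homomorphism one verifies that the proposed images of the generators satisfy the defining relations of the source. Since $Y$ maps to the unit $[12\cdots k]$, the invertibility relation is automatic, with $Y^{-1}\mapsto[12\cdots k]^{-1}$. Next, the skew-Laurent relation $YX_{ij}=\sigma(X_{ij})Y=qX_{ij}Y$ translates, after applying $\alpha$ and cancelling $[12\cdots k]^{-1}$, into the quasi-commutation identity $[12\cdots k]\,[1\cdots\widehat{k-i+1}\cdots k\ (j+k)]=q\,[1\cdots\widehat{k-i+1}\cdots k\ (j+k)]\,[12\cdots k]$ between two quantum minors of $\KqMat{k}{n}$ differing in a single column, which is a known quasi-commutation relation for quantum minors. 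Finally, and this is the bulk of the work, one checks that the elements $\alpha(X_{ij})$ satisfy the quantum $2\times 2$ matrix relations on every $2\times 2$ submatrix of $(\alpha(X_{ij}))$; equivalently, that the near-initial minors $[1\cdots\widehat{k-i+1}\cdots k\ (j+k)]$, together with $[12\cdots k]$, satisfy the appropriate quasi-commutation and ``quantum $2\times 2$ determinant'' identities in $\KqMat{k}{n}$. These follow from the standard identities among quantum minors (quantum Pl\"ucker relations and the commutation relations for minors, as in \cite{KellyLenaganRigal} and the references there); the conventions in the statement---the fixed row set $\{1,\dots,k\}$, the omitted index $k-i+1$, the added column $j+k$, and the choice $a=1$---are precisely what makes the powers of $q$ and the signs work out.

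\textbf{Stages (ii) and (iii).} The codomain is generated as an algebra by the quantum Pl\"ucker coordinates $[I]$, for $k$-subsets $I$ of $\{1,\dots,n\}$, together with $[12\cdots k]^{-1}$. Now $[12\cdots k]=\alpha(Y)$, $[12\cdots k]^{-1}=\alpha(Y^{-1})$, and each near-initial minor equals $\alpha(X_{ij}Y)$, hence lies in the image. For a general $I$ one argues by induction on the number of indices of $I$ outside $\{1,\dots,k\}$: a three-term quantum Pl\"ucker relation expresses $[I]\,[12\cdots k]$ as an integer combination of products of Pl\"ucker coordinates each ``closer'' to $\{1,\dots,k\}$, which lie in the image by induction, and since $[12\cdots k]$ is a unit there, so is $[I]$; thus $\alpha$ is surjective. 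For injectivity, it is well known that $\KqMat{k}{n-k}$ is a Noetherian domain of GK dimension $k(n-k)$, so the skew-Laurent extension $\KqMat{k}{n-k}[Y^{\pm1};\sigma]$ is a Noetherian domain of GK dimension $k(n-k)+1$; on the other side $\KqGr{k}{n}$ is a Noetherian domain of GK dimension $k(n-k)+1$ by the results recalled above, and localising at the powers of $[12\cdots k]$ changes neither property. Since $\alpha$ is a surjection between Noetherian domains of the same finite GK dimension, and on these algebras (iterated skew-polynomial/skew-Laurent extensions) GK dimension drops strictly on passing to a proper quotient by a nonzero ideal, the kernel of $\alpha$ must be zero. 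Hence $\alpha$ is an algebra isomorphism. (One could instead try to write down a two-sided inverse on generators, but expressing it on a general $[I]$ needs the same Pl\"ucker-relation combinatorics, so the GK-dimension route is cleaner.)

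I expect Stage (i)---specifically the verification that the $\alpha(X_{ij})$ satisfy \emph{all} the quantum $2\times 2$ matrix relations---to be the main obstacle: it is where the detailed combinatorics of quantum minors is really used and where the non-obvious conventions chosen in the statement pay off. The surjectivity induction and the GK-dimension conclusion are comparatively routine once the relevant quantum Pl\"ucker relations and dimension formulae are in hand.
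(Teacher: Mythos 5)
The paper does not prove this proposition: it is stated with a \qed and attributed to Lenagan--Russell \cite{LenaganRussell} (building on \cite{KellyLenaganRigal}), so there is no ``paper's own proof'' to compare against. Your sketch is a plausible shape for a direct proof, but it is worth noting that the cited argument is structurally different. Lenagan--Russell do not check the quantum $2\times 2$ relations on the elements $\alpha(X_{ij})$ one at a time. Instead they invoke the general machinery of noncommutative dehomogenisation from \cite{KellyLenaganRigal}: since $\KqGr{k}{n}$ is $\nat$-graded and $[12\cdots k]$ is a degree-one regular normal element, the localisation is automatically isomorphic to a skew-Laurent extension $R_{0}[Y^{\pm 1};\sigma]$ of its degree-zero piece $R_{0}$, and the real content is identifying $R_{0}$ with $\KqMat{k}{n-k}$ by matching generators and using a presentation/dimension argument. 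Your Stage~(i) thereby largely disappears: the skew-Laurent structure and the relation $YX_{ij}=qX_{ij}Y$ come for free from the grading and normality of $[12\cdots k]$, and what is left is an isomorphism of the \emph{degree-zero} parts, which is more tractable. This is why the paper repeatedly speaks of the map $\alpha$ as ``the dehomogenisation isomorphism'' and discusses the $\integ$-grading on $\mathrm{Loc}(\KqGr{k}{n})$ explicitly in Section~\ref{s:dehomog}.

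On the details of your sketch: two points deserve care. First, in Stage~(iii) you assert that localising at powers of $[12\cdots k]$ ``changes neither property''; equality of GK dimension under localisation is not automatic and is not needed --- the inequality $\mathrm{GKdim}\ \mathrm{Loc}(\KqGr{k}{n}) \geq k(n-k)+1$ coming from the inclusion of $\KqGr{k}{n}$ suffices. Second, the claim that GK dimension strictly drops on passing to a proper quotient is delicate: the result you want (\cf \cite[Proposition~3.15]{KrauseLenagan}, which this paper also uses) requires the ideal to contain a regular \emph{normal} element, which is not automatic for an arbitrary nonzero ideal in a noncommutative domain. One either needs a specific structural fact about $\KqMat{k}{n-k}[Y^{\pm 1};\sigma]$ (\eg that every nonzero ideal meets a suitable Ore set, or a primality/catenarity argument), or one sidesteps injectivity entirely by the dehomogenisation route, under which $\alpha$ is built as an isomorphism from the start rather than shown to have trivial kernel after the fact. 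You are right that Stage~(i) is where the hard combinatorics would live in a hands-on proof; the payoff of the Kelly--Lenagan--Rigal framework is precisely to avoid it.
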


This map allows us to transport the quantum cluster algebra structure on quantum matrices, as described in the previous section, to the above localisation of the quantum Grassmannian.  Set $\mathrm{Loc}(\KqGr{k}{n})=\KqGr{k}{n}\!\left[ [12\cdots k]^{-1} \right]$.  

From generalities on noncommutative dehomogenisations (see \cite[\S 3]{KellyLenaganRigal}), the $\nat$-grading on $\KqGr{k}{n}$ that has all the generating Pl\"{u}cker coordinates $[I]$ in degree one gives rise to a $\integ$-grading on the localisation.  Let $\KqGr{k}{n}_{i}$ denote the degree $i$ homogeneous component of $\KqGr{k}{n}$ for the aforementioned grading.  Then
\[ \mathrm{Loc}(\KqGr{k}{n})=\bigdsum_{l\in \integ} \mathrm{Loc}(\KqGr{k}{n})_{l} \]
with
\[ \mathrm{Loc}(\KqGr{k}{n})_{l}=\sum_{j\geq 0} \KqGr{k}{n}_{l+j}[1\dotsm k]^{-j}. \]
The noncommutative dehomogenisation of $\KqGr{k}{n}$ is defined to be the degree 0 part of $\mathrm{Loc}(\KqGr{k}{n})$ with respect to this $\integ$-grading and the results of \cite{KellyLenaganRigal} and \cite{LenaganRussell} show that this degree 0 part is isomorphic to the quantum matrices $\KqMat{k}{n-k}$, via the map $\alpha$ of Proposition~\ref{p:dhom}.  In particular, the map $\alpha$ sends an element of the quantum matrices to a $\mathbb{K}$-linear combination of elements of the form $m[1\dotsm k]^{-j}$ where $m$ is a homogeneous polynomial of degree $j$ in $\KqGr{k}{n}$.  

We note that $\KqGr{k}{n}$ is a subalgebra of $\mathrm{Loc}(\KqGr{k}{n})$ but that $\KqGr{k}{n}$ has non-trivial intersection with every homogeneous component of $\mathrm{Loc}(\KqGr{k}{n})$.  More precisely, $\KqGr{k}{n}_{l}\subseteq \mathrm{Loc}(\KqGr{k}{n})_{l}$ for every $l$; less formally, $\KqGr{k}{n}$ is ``spread out'' across all the components of $\mathrm{Loc}(\KqGr{k}{n})$ and this is at the root of the technical difficulties that must be overcome in order to deduce our main result.

Now Proposition~\ref{p:dhom} describes the algebra $\mathrm{Loc}(\KqGr{k}{n})$ in terms of a skew-Laurent extension of the quantum matrices and so we can use Proposition~\ref{p:skewLaurent-extra-coeffs} to deduce the following.

\begin{proposition} The localisation $\mathrm{Loc}(\KqGr{k}{n})$ is a graded quantum cluster algebra.
\end{proposition}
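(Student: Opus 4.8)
The plan is to exhibit $\mathrm{Loc}(\KqGr{k}{n})$ as the isomorphic image of a skew-Laurent extension of $\KqMat{k}{n-k}$ and to transport the graded quantum cluster algebra structure of the latter across that isomorphism. Both ingredients are already at hand. By Section~\ref{s:KqMatisQCA}, $\KqMat{k}{n-k}$ is a graded quantum cluster algebra with initial data $(\curly{M}(k,n-k),B(k,n-k),L(k,n-k),G(k,n-k))$, where $G(k,n-k)$ has $(r,s)$-entry $\min(r,s)$ (Lemmas~\ref{lemma:degofMrs} and~\ref{lemma:degsumsequal}); and by Proposition~\ref{p:dhom} the dehomogenisation map $\alpha$ is an algebra isomorphism from $\KqMat{k}{n-k}[Y^{\pm 1};\sigma]$ onto $\mathrm{Loc}(\KqGr{k}{n})$, where $\sigma$ is the automorphism $X_{ij}\mapsto qX_{ij}$.

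First I would confirm that $\sigma$ is an algebra automorphism of $\KqMat{k}{n-k}$: the quantum $2\cross 2$ matrix relations are homogeneous of degree two on both sides for the natural $\nat$-grading placing each $X_{ij}$ in degree one, so rescaling every generator by $q$ preserves the relations, and $X_{ij}\mapsto q^{-1}X_{ij}$ furnishes an inverse. The substantive step is then to verify the hypothesis of Proposition~\ref{p:skewLaurent-extra-coeffs}, namely that $\sigma$ acts by multiplication by powers of $q$ on the \emph{initial cluster} $\curly{M}(k,n-k)$ of $\KqMat{k}{n-k}$, as opposed to merely on the matrix generators. Since $\sigma$ multiplies any element homogeneous of degree $d$ by $q^{d}$, and $\curly{M}_{kj}(r,s)$ is a quantum minor of size $\min(r,s)$ and so is homogeneous of degree $\min(r,s)$ by Lemma~\ref{lemma:degofMrs}, we obtain $\sigma(\curly{M}_{kj}(r,s))=q^{\min(r,s)}\curly{M}_{kj}(r,s)$; that is, $\sigma$ scales each initial cluster variable by the power of $q$ prescribed by the corresponding entry of $G(k,n-k)$.

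With that checked, Proposition~\ref{p:skewLaurent-extra-coeffs} applies directly and equips $\KqMat{k}{n-k}[Y^{\pm 1};\sigma]$ with a graded quantum cluster algebra structure, with $Y$ and $Y^{-1}$ adjoined as extra coefficients in degrees $\pm 1$. Finally, since a graded quantum cluster algebra structure on an algebra amounts to the choice of an initial graded seed inside the skew-field of fractions of a quantum torus, any algebra isomorphism transports such a structure wholesale; applying this to $\alpha$ from Proposition~\ref{p:dhom} produces the desired graded quantum cluster algebra structure on $\mathrm{Loc}(\KqGr{k}{n})$. I do not anticipate a genuine obstacle; the only point requiring care is the one flagged above, distinguishing the matrix generators (on which $\sigma$ is defined) from the actual initial cluster variables (the quantum minors $\curly{M}_{kj}(r,s)$) and invoking Lemma~\ref{lemma:degofMrs} to see that $\sigma$ still scales the latter by powers of $q$. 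It is also worth recording, although not needed for the statement itself, that the $\integ$-grading so obtained coincides with the natural grading on $\mathrm{Loc}(\KqGr{k}{n})$ coming from noncommutative dehomogenisation, with $\alpha(Y)=[12\cdots k]$ sitting in degree one, since this compatibility will be used in the subsequent sections.
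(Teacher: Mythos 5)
Your proof follows the same route as the paper: establish the graded quantum cluster algebra structure on $\KqMat{k}{n-k}[Y^{\pm 1};\sigma]$ via Proposition~\ref{p:skewLaurent-extra-coeffs}, checking by Lemma~\ref{lemma:degofMrs} that $\sigma$ scales each initial cluster variable $\curly{M}_{k(n-k)}(r,s)$ by $q^{\min(r,s)}$, then transport across the dehomogenisation isomorphism $\alpha$. The flagged point---that the hypothesis of Proposition~\ref{p:skewLaurent-extra-coeffs} concerns the action of $\sigma$ on the initial \emph{cluster} rather than on the matrix generators, and that this is supplied by the homogeneity of the quantum minors---is exactly the observation the paper makes, so the core of the argument is correct.

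However, the closing remark in your proposal is wrong and directly contradicts the paper. You assert that the grading obtained in this way coincides with the natural $\integ$-grading on $\mathrm{Loc}(\KqGr{k}{n})$ induced by noncommutative dehomogenisation. It does not. Under Proposition~\ref{p:skewLaurent-extra-coeffs} the original cluster variables keep their degrees from $G(k,n-k)$, namely $\deg_{G^{\mathrm{Loc}}}(\alpha(\curly{M}_{k(n-k)}(r,s)))=\min(r,s)$, whereas the dehomogenisation $\integ$-grading places the entire image $\alpha(\KqMat{k}{n-k})$---and hence every $\alpha(\curly{M}_{k(n-k)}(r,s))$---in degree $0$. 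The two gradings only agree on $Y^{\pm 1}\mapsto [1\cdots k]^{\pm 1}$. The paper states this explicitly: the choice of $G^{\mathrm{Loc}}(k,n)$ deliberately retains the grading from $\KqMat{k}{n-k}$ rather than adopting the dehomogenisation grading, precisely because the later lifting construction needs to remember that the initial cluster variables correspond to minors of various sizes. Since you say this compatibility ``will be used in the subsequent sections,'' the misstatement matters: conflating the two gradings would derail the later argument where $G^{\mathrm{Loc}}$ becomes the input $\underline{t}$ to the rescaling of Theorem~\ref{t:rescaledQCA}.
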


\begin{proof} We have seen in Section~\ref{s:KqMatisQCA} that $\KqMat{k}{n-k}$ is a graded quantum cluster algebra with the initial data provided by the theorem of Gei\ss, Leclerc and Schr\"{o}er and the grading being the standard grading on $\KqMat{k}{n-k}$.  In particular, the initial cluster consists of homogeneous elements and the automorphism $\sigma$ in Proposition~\ref{p:dhom} therefore acts on the initial cluster variable $\curly{M}_{k(n-k)}(r,s)=\minor{R(r,s)}{C(r,s)}$ by multiplication by $q^{\deg(\curly{M}_{k(n-k)}(r,s))}=q^{\min(r,s)}$.  Thus the required conditions for applying Proposition~\ref{p:skewLaurent-extra-coeffs} hold and the skew-Laurent extension $\KqMat{k}{n-k}[Y^{\pm 1};\sigma]$ induced by $\sigma$ is a graded quantum cluster algebra.  Note that we have as additional coefficients $Y$ and $Y^{-1}$.

Since the map $\alpha$ in Proposition~\ref{p:dhom} is an algebra isomorphism, this structure is transported to $\mathrm{Loc}(\KqGr{k}{n})$.  The extra coefficients $Y$ and $Y^{-1}$ are mapped under $\alpha$ to $[1\dotsm k]$ and $[1\dotsm k]^{-1}$ respectively.  We choose to place the coefficient $[1\dotsm k]$ at position $(1,k+1)$ and $[1\dotsm k]^{-1}$ at $(2,k+1)$; these choices are arbitrary.  

We will denote by $\curly{L}(k,n)$ the union of the image of $\curly{M}(k,n-k)$ under $\alpha$ with the set $\{ [1\dotsm k],[1\dotsm k]^{-1} \}$.  The corresponding exchange matrix will be denoted $B^{\mathrm{Loc}}(k,n)$; this matrix is obtained from the exchange matrix $B(k,n-k)$ by adding rows and columns indexed as $(1,k+1)$ and $(2,k+1)$ consisting of zeroes, corresponding to the two extra coefficients.  The quasi-commutation matrix will be denoted $L^{\mathrm{Loc}}(k,n)$ and, as described in Proposition~\ref{p:skewLaurent-extra-coeffs}, this is determined by $L(k,n-k)$ and the automorphism $\sigma$.  

For our grading, described by $G^{\mathrm{Loc}}(k,n) \in \integ^{k(n-k)+2}$, we take the corresponding entry from $G(k,n-k)$ for elements of the image of $\curly{M}(k,n-k)$ under $\alpha$ and take $G^{\mathrm{Loc}}_{(1,k+1)}=1$ and $G^{\mathrm{Loc}}_{(2,k+1)}=-1$, in accordance with the natural choice.  We note that this is not the natural grading on $\mathrm{Loc}(\KqGr{k}{n})$ described above: that would of course have the image of $\curly{M}(k,n-k)$ in degree 0, though it would agree with $G^{\mathrm{Loc}}_{(1,k+1)}=1$ and $G^{\mathrm{Loc}}_{(2,k+1)}=-1$.  In making the choice of $G^{\mathrm{Loc}}(k,n)$ described here we are explicitly choosing to retain the grading associated to the pre-image under $\alpha$, in $\KqMat{k}{n-k}$.

Thus $\mathrm{Loc}(\KqGr{k}{n})$ has a quantum cluster algebra structure with initial data
\[ (\curly{L}(k,n),B^{\mathrm{Loc}}(k,n),L^{\mathrm{Loc}}(k,n),G^{\mathrm{Loc}}(k,n)). \qedhere \]
\end{proof}

However, this does not show that $\KqGr{k}{n}$ is a quantum cluster algebra.  We see from Proposition~\ref{p:dhom} that $\alpha$ maps each matrix generator $X_{ij}$ to the product of a quantum Pl\"{u}cker coordinate and the element $[1\cdots k]^{-1}$, and these are not elements of $\KqGr{k}{n}$, viewed as a subalgebra of the localisation in the obvious way.  So although $\KqGr{k}{n}$ is a subalgebra of a quantum cluster algebra, it is not a cluster subalgebra: the cluster variables are not elements of the subalgebra.

For later use, we record a special case of a companion result of Lenagan and Russell (\cite[Proposition~3.3]{LenaganRussell}) that describes the image of a quantum minor in $\KqMat{k}{n-k}$ under the map $\alpha$.  

First, for indexing sets $I=\{ i_{1},\ldots, i_{t}\}$ and $J=\{ j_{1},\ldots ,j_{t}\}$, define 
\[ Q_{1}(I,J) =\{ \widetilde{j_{1}+k},\widetilde{j_{2}+k},\ldots,\widetilde{j_{t}+k}\} \disjointunion \left( \{ 1,\ldots,k \} \setminus \{ k-i_{1}+1,k-i_{2}+1,\ldots k-i_{t}+1 \} \right) \]
It is straightforward to verify that this is a subset of $\{1,\ldots, n\}$ of cardinality $k$.  

\begin{lemma}[{\cite{LenaganRussell}}]\label{l:alphaofminor} Let $\minor{I}{J}$ denote the quantum minor in $\KqMat{k}{n-k}$ with row and column indexing sets $I$ and $J$.  Then
\[ \alpha(\minor{I}{J})=[Q_{1}(I,J)][1\cdots k]^{-1}.  \] \qed 
\end{lemma}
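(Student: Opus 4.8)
The plan is to obtain this as the case $a=1$ of \cite[Proposition~3.3]{LenaganRussell}.  The only point to check is that the combinatorial data recorded there matches what is written above: a quantum minor in $\KqMat{k}{n-k}$ has column index set $J\subseteq\{1,\dotsc,n-k\}$, so every column index $j_{p}$ satisfies $k+1\le j_{p}+k\le n$ and hence $\widetilde{j_{p}+k}=j_{p}+k$; thus the set $Q_{1}(I,J)$ defined just before the lemma coincides, for $a=1$, with the one used in \cite{LenaganRussell}, and the map $\alpha$ of Proposition~\ref{p:dhom} is the $a=1$ instance of their dehomogenisation isomorphism.  Granting that, the lemma is immediate.

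For a self-contained argument I would induct on $t=\card{I}=\card{J}$.  The base case $t=1$ is exactly Proposition~\ref{p:dhom}: here $I=\{i\}$, $J=\{j\}$, so $\minor{I}{J}=X_{ij}$ and $Q_{1}(I,J)=\{j+k\}\disjointunion\left(\{1,\dotsc,k\}\setminus\{k-i+1\}\right)$, whence $[Q_{1}(I,J)]=[1\cdots\widehat{k-i+1}\cdots k\ (j+k)]$ and the claimed formula is precisely the defining property of $\alpha$.  For the inductive step, first expand $\minor{I}{J}$ by the quantum Laplace (cofactor) expansion along its last column $j_{t}$, writing it as a signed $q$-power sum over $a=1,\dotsc,t$ of products of the entry $X_{i_{a}j_{t}}$ with the $(t-1)\times(t-1)$ quantum minor $\minor{I\setminus\{i_{a}\}}{J\setminus\{j_{t}\}}$.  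Apply the algebra homomorphism $\alpha$, substitute the inductive hypothesis for each smaller minor and Proposition~\ref{p:dhom} for each $X_{i_{a}j_{t}}$, and then move every copy of $[1\cdots k]^{-1}$ to the right, using that $[1\cdots k]$ is a normal element of $\KqGr{k}{n}$ and quasi-commutes with every quantum Pl\"{u}cker coordinate.  It then remains to see that the resulting $\mathbb{K}$-linear combination of products of two Pl\"{u}cker coordinates is a power of $q$ times $[Q_{1}(I,J)][1\cdots k]$; this is a quantum Pl\"{u}cker relation in $\KqGr{k}{n}$ --- the one-step relation exchanging the index $j_{t}+k$ --- and it is the relevant one because
\[ Q_{1}(I,J)=\left(Q_{1}(I\setminus\{i_{a}\},J\setminus\{j_{t}\})\setminus\{k-i_{a}+1\}\right)\cup\{j_{t}+k\} \qquad (a=1,\dotsc,t), \]
which is exactly the index combinatorics that such a relation encodes.

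The delicate point --- and the one I expect to absorb most of the effort --- is the bookkeeping of powers of $q$: the signs and $q$-powers produced by the non-commutative Laplace expansion, together with the exponents picked up when commuting $[1\cdots k]$ past the ``one-swap'' Pl\"{u}cker coordinates $[1\cdots\widehat{k-i+1}\cdots k\ (j+k)]$ (this last exponent being the same for every such coordinate, since each of them meets $\{1,\dotsc,k\}$ in a subset of the fixed size $k-1$), must be shown to combine to exactly the stated identity, with no residual factor of $q$.  Since this verification is already carried out in \cite{LenaganRussell}, invoking \cite[Proposition~3.3]{LenaganRussell} with $a=1$ is the efficient way to conclude.
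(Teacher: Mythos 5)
Your primary argument---invoking \cite[Proposition~3.3]{LenaganRussell} with $a=1$ after verifying that $\widetilde{j_{p}+k}=j_{p}+k$ for $j_{p}\in\{1,\dotsc,n-k\}$, so that the set $Q_{1}(I,J)$ agrees with the one in that reference---is exactly what the paper does (the lemma is stated with a \verb|\qed| and no proof, precisely because it is a direct specialisation of the cited result). The supplementary inductive sketch via quantum Laplace expansion and Pl\"{u}cker relations is a plausible self-contained alternative, but since you yourself defer the $q$-power bookkeeping to \cite{LenaganRussell}, the two approaches coincide in substance.
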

Note that the sets $Q_{1}(I,J)$ describe column sets of maximal minors, whose row set is necessarily $\{1,\dotsc,k\}$. 

This lemma has the following two consequences for the quantum cluster algebra structure on $\mathrm{Loc}(\KqGr{k}{n})$.  First, we see that we can rephrase the lemma in terms of the components of the noncommutative dehomogenisation as described above.  Recall that $\alpha$ restricts to an isomorphism of $\KqMat{k}{n-k}$ with the degree 0 part of $\mathrm{Loc}(\KqGr{k}{n})$; the lemma tells us more.

\begin{corollary}\label{c:imageofICunderalpha} The image of $\curly{M}(k,n-k)$ under $\alpha$ in $\mathrm{Loc}(\KqGr{k}{n})$ is contained in the subspace $\KqGr{k}{n}_{1}[1\cdots k]^{-1}$ of $\mathrm{Loc}(\KqGr{k}{n})_{0}=\sum_{j\geq 0} \KqGr{k}{n}_{j}[1\dotsm k]^{-j}$.  \qed
\end{corollary}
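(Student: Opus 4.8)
The plan is to derive this directly from Lemma~\ref{l:alphaofminor}, since every element of the initial cluster $\curly{M}(k,n-k)$ is by construction a quantum minor of $\KqMat{k}{n-k}$, and that lemma tells us exactly what $\alpha$ does to a quantum minor.

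First I would recall from Definition~\ref{d:qGLScluster} that for each $1\leq r\leq k$, $1\leq s\leq n-k$, the cluster variable $\curly{M}_{k(n-k)}(r,s)$ is the quantum minor $\minor{R(r,s)}{C(r,s)}$ of $\KqMat{k}{n-k}$, where $R(r,s)$ and $C(r,s)$ are the row and column sets specified there. Applying Lemma~\ref{l:alphaofminor} with $I=R(r,s)$ and $J=C(r,s)$ then gives
\[ \alpha\!\left(\curly{M}_{k(n-k)}(r,s)\right)=\alpha\!\left(\minor{R(r,s)}{C(r,s)}\right)=[Q_{1}(R(r,s),C(r,s))]\,[1\cdots k]^{-1}. \]
As noted after the definition of $Q_{1}$, the set $Q_{1}(R(r,s),C(r,s))$ is a $k$-subset of $\{1,\ldots,n\}$, so $[Q_{1}(R(r,s),C(r,s))]$ is a quantum Pl\"{u}cker coordinate, and hence a homogeneous element of $\KqGr{k}{n}$ of degree $1$ for the standard grading, i.e.\ it lies in $\KqGr{k}{n}_{1}$. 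Therefore $\alpha(\curly{M}_{k(n-k)}(r,s))\in \KqGr{k}{n}_{1}[1\cdots k]^{-1}$ for every admissible pair $(r,s)$, which is exactly the asserted containment of the image of $\curly{M}(k,n-k)$.

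Finally I would note that $\KqGr{k}{n}_{1}[1\cdots k]^{-1}$ is indeed a subspace of $\mathrm{Loc}(\KqGr{k}{n})_{0}$: this is the $j=1$ summand in the decomposition $\mathrm{Loc}(\KqGr{k}{n})_{0}=\sum_{j\geq 0}\KqGr{k}{n}_{j}[1\dotsm k]^{-j}$ established earlier. There is no real obstacle here---the statement is an immediate consequence of Lemma~\ref{l:alphaofminor} together with the fact that the initial cluster consists of quantum minors; the only point worth spelling out is that a quantum Pl\"{u}cker coordinate sits in degree one of $\KqGr{k}{n}$, which is part of the definition of the standard grading used throughout this section.
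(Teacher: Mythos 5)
Your argument is correct and is precisely the one the paper intends: the corollary carries a \qed because it follows immediately from Lemma~\ref{l:alphaofminor} combined with Definition~\ref{d:qGLScluster}, exactly as you spell out (each initial cluster variable is a quantum minor, its image is a Pl\"{u}cker coordinate times $[1\cdots k]^{-1}$, and Pl\"{u}cker coordinates sit in degree one). Nothing to add.
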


From Definition~\ref{d:qGLScluster}, we may compute the image of $\curly{M}(k,n-k)$ under $\alpha$ explicitly.

\begin{lemma} Let $\minor{R(r,s)}{C(r,s)} \in \curly{M}(k,n-k)$.  Then
\[ \alpha(\minor{R(r,s)}{C(r,s)}) = \left[\{ \widetilde{1-s},\widetilde{2-s},\ldots ,\widetilde{r-s} \} \disjointunion \left(\{ 1,\ldots,k\} \setminus \{r,\ldots ,r+s\}\right)\right][1\cdots k]^{-1}. \] \qed
\end{lemma}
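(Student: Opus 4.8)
The plan is to read the formula off from Lemma~\ref{l:alphaofminor}: since $\alpha(\minor{I}{J})=[Q_{1}(I,J)][1\cdots k]^{-1}$, all that is needed is to take $I=R(r,s)$ and $J=C(r,s)$, write these two sets out explicitly, and evaluate $Q_{1}(I,J)$. So first I would make Definition~\ref{d:qGLScluster} explicit. Put $p=\min(r,s)$, and write $j=n-k$ for the second matrix parameter. The same analysis that underlies Lemma~\ref{lemma:degofMrs} shows that the intersections defining $R(r,s)$ and $C(r,s)$ cut the listed runs of consecutive integers down to exactly their $p$ admissible members,
\[ R(r,s)=\{k-r+1,\,k-r+2,\,\ldots,\,k-r+p\},\qquad C(r,s)=\{j-s+1,\,j-s+2,\,\ldots,\,j-s+p\}, \]
the case split $r\le s$ versus $r>s$ being precisely what decides whether it is the top or the bottom of each run that is discarded. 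Hence $I=\{i_{1}<\cdots<i_{p}\}$ with $i_{\ell}=k-r+\ell$ and $J=\{j_{1}<\cdots<j_{p}\}$ with $j_{\ell}=j-s+\ell$.

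Next I would substitute into $Q_{1}(I,J)$. For the ``column'' block, $\widetilde{j_{\ell}+k}=\widetilde{(n-k)-s+\ell+k}=\widetilde{n-s+\ell}=\widetilde{\ell-s}$, so that block is $\{\widetilde{1-s},\widetilde{2-s},\ldots,\widetilde{p-s}\}$. For the ``row'' block, $k-i_{\ell}+1=r-\ell+1$, so $\{\,k-i_{\ell}+1\mid 1\le\ell\le p\,\}=\{r-p+1,\ldots,r\}$ and the block is $\{1,\ldots,k\}\setminus\{r-p+1,\ldots,r\}$. Lemma~\ref{l:alphaofminor} then gives
\[ \alpha(\minor{R(r,s)}{C(r,s)})=\left[\,\{\widetilde{1-s},\ldots,\widetilde{p-s}\}\disjointunion\big(\{1,\ldots,k\}\setminus\{r-p+1,\ldots,r\}\big)\,\right][1\cdots k]^{-1}, \]
and the last step is the purely cosmetic one of unwinding $p=\min(r,s)$ so that the bracketed $k$-subset of $\{1,\ldots,n\}$ is brought into the form displayed in the statement: when $r\le s$ one has $p=r$; when $r>s$ one has $p=s$, and the integers $1,\ldots,r-s$, which lie in $\{1,\ldots,k\}\setminus\{r-s+1,\ldots,r\}$, may be absorbed into the first block, since $\widetilde{1},\ldots,\widetilde{r-s}$ simply continue the run $\widetilde{1-s},\ldots,\widetilde{0}$.

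The only points requiring care are bookkeeping: the $r\le s$ versus $r>s$ split (the source of the $\min$), the opposed indexings of $R(r,s),C(r,s)$ against the row/column data entering $Q_{1}$ (kept straight by the explicit formulas $i_{\ell}=k-r+\ell$, $j_{\ell}=j-s+\ell$), and verifying that the two blocks really do form a disjoint union of total cardinality $k$. The last holds because $s\le j=n-k$ forces the wrapped indices $\widetilde{1-s},\ldots,\widetilde{0}$ into $\{k+1,\ldots,n\}$, hence outside $\{1,\ldots,k\}$, so the right-hand side is a genuine quantum Pl\"{u}cker coordinate times $[1\cdots k]^{-1}$. There is no substantive obstacle here; all the content is already in Lemma~\ref{l:alphaofminor} together with the explicit description of $R(r,s)$ and $C(r,s)$.
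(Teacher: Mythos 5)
Your derivation is sound: the description of $R(r,s)$ and $C(r,s)$ as intervals of length $p=\min(r,s)$ (via Lemma~\ref{lemma:degofMrs}), the substitution into Lemma~\ref{l:alphaofminor}, and the reduction $\widetilde{j_\ell+k}=\widetilde{\ell-s}$ are all correct, giving $Q_1=\{\widetilde{1-s},\ldots,\widetilde{p-s}\}\disjointunion(\{1,\ldots,k\}\setminus\{r-p+1,\ldots,r\})$. The absorption of $\{1,\ldots,r-s\}$ into the wrapped run for $r>s$ is also correct. The gap is the unchecked assertion that this ``purely cosmetic'' unwinding reproduces the displayed formula. Carry the absorption through to the second block and you find that it becomes $\{1,\ldots,k\}\setminus\{1,\ldots,r\}=\{r+1,\ldots,k\}$ in both cases, \emph{not} $\{1,\ldots,k\}\setminus\{r,\ldots,r+s\}$ as the statement asserts.

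The two do not agree, and indeed the statement as printed cannot be right: $\{\widetilde{1-s},\ldots,\widetilde{r-s}\}$ has $r$ elements, so the second block must have exactly $k-r$, which $\{1,\ldots,k\}\setminus\{r,\ldots,r+s\}$ does not in general. Concretely, for $k=3$, $n=6$, $(r,s)=(2,2)$ the displayed formula yields $[156][123]^{-1}$ whereas the correct value (see the $(2,2)$-entry of Figure~\ref{fig:initialseedLoc36}) is $[356][123]^{-1}$. So the paper has a typo: $\{r,\ldots,r+s\}$ should read $\{1,\ldots,r\}$. Your calculation is what proves the lemma (the paper gives no argument, only \qed), but you should not have waved away the last step -- doing it explicitly would have exposed the discrepancy rather than papering over it.
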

\noindent We illustrate this for our running example in Figure~\ref{fig:initialseedLoc36}.

\pagebreak
\begin{figure}[t]
\begin{center}
{\footnotesize
\scalebox{1}{\begin{tikzpicture}[node distance=2.5cm,on grid,>=angle 90]

\node (11) at (0,0) {$[236][123]^{-1}$}; 
\node (12) [right=of 11] {$[235][123]^{-1}$};
\node (13) [right=of 12,rectangle,draw=black,thick] {$[234][123]^{-1}$};

\node (21) [below=of 11] {$[136][123]^{-1}$}; 
\node (22) [right=of 21] {$[356][123]^{-1}$};
\node (23) [right=of 22,rectangle,draw=black,thick] {$[345][123]^{-1}$};

\node (31) [below=of 21,rectangle,draw=black,thick] {$[126][123]^{-1}$}; 
\node (32) [right=of 31,rectangle,draw=black,thick] {$[156][123]^{-1}$};
\node (33) [right=of 32,rectangle,draw=black,thick] {$[456][123]^{-1}$};

\node (14) [right=of 13,rectangle,draw=black,thick] {$[123]$};
\node (24) [right=of 23,rectangle,draw=black,thick] {$[123]^{-1}$};

\draw[semithick,->] (11) to (12);
\draw[semithick,->] (12) to (13);

\draw[semithick,->] (21) to (22);
\draw[semithick,->] (22) to (23);

\draw[semithick,->] (11) to (21);
\draw[semithick,->] (21) to (31);

\draw[semithick,->] (12) to (22);
\draw[semithick,->] (22) to (32);

\draw[semithick,->] (22) to (11);
\draw[semithick,->] (23) to (12);

\draw[semithick,->] (32) to (21);
\draw[semithick,->] (33) to (22);

\end{tikzpicture}}
}
\end{center}
\caption{\label{fig:initialseedLoc36}Initial cluster for a quantum cluster algebra structure on $\mathrm{Loc}(\KqGr{3}{6})$.}
\end{figure}

These results tell us about our initial cluster variables but in order to complete the lifting of the whole quantum cluster algebra structure to the quantum Grassmannian, we need a stronger statement on the images of \emph{all} quantum cluster variables.  This is achieved by the following theorem, which uses a cluster algebra argument, as opposed to direct calculation of the sort that gives the above results.  It also emphasises the relevance of the categorification.

\begin{theorem}\label{t:thetaisscalingpower} Let $v$ be a quantum cluster variable for the quantum cluster algebra structure on $\KqMat{k}{n-k}$ constructed from the initial data $\curly{M}(k,n-k)$.  Let $M(v)$ be the module in $\curly{C}_{w}$ corresponding to $v$.  Then $\alpha(v) \in \KqGr{k}{n}_{\theta(M(v))}[1\dotsm k]^{-\theta(M(v))}$, where $\theta(M(v))$ is equal to the dimension of the top of the module $M(v)$.
\end{theorem}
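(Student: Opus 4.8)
The plan is to induct on the number of mutations needed to obtain $v$ from the initial cluster $\curly{M}(k,n-k)$, tracking simultaneously two pieces of data: the homogeneous component of $\mathrm{Loc}(\KqGr{k}{n})$ in which $\alpha(v)$ lives (with respect to the $\integ$-grading coming from the natural $\nat$-grading on $\KqGr{k}{n}$), and the $\theta$-value of the corresponding module $M(v)$. The base case is exactly Corollary~\ref{c:imageofICunderalpha} together with the observation made on page~\pageref{page:theta-start} that $\theta(V_{(\alpha,\beta)}) = 1$ for every initial cluster variable; since $\alpha$ identifies $\KqMat{k}{n-k}$ with the degree $0$ part of $\mathrm{Loc}(\KqGr{k}{n})$ and the image of the initial cluster lies in $\KqGr{k}{n}_{1}[1\cdots k]^{-1}$, the claim $\alpha(v)\in \KqGr{k}{n}_{\theta(M(v))}[1\cdots k]^{-\theta(M(v))}$ holds initially.

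For the inductive step, suppose $v'$ is obtained from a cluster in which the claim already holds by a single mutation in direction $k$, with exchange relation $v' = (\text{monomial }U) + (\text{monomial }W)$, say $v'v_k = U + W$ where $v_k$ is the variable being replaced. Applying $\alpha$ and using that $\alpha$ is an algebra isomorphism, we get $\alpha(v')\alpha(v_k) = \alpha(U) + \alpha(W)$ inside $\mathrm{Loc}(\KqGr{k}{n})$. By the inductive hypothesis each variable appearing in $U$ and $W$ lies in a known homogeneous component with exponent governed by $\theta$, so each of $\alpha(U)$ and $\alpha(W)$ is homogeneous of the form $(\text{homogeneous poly of degree }d_U)\,[1\cdots k]^{-d_U}$ and similarly for $W$, where $d_U = \theta(U)$, $d_W = \theta(W)$ in the notation of the categorical mutation formula on page~\pageref{page:theta-end}. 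The key algebraic point is that, because $\alpha(v')$ is a \emph{single} element of $\mathrm{Loc}(\KqGr{k}{n})$ obtained by dividing $\alpha(U)+\alpha(W)$ by $\alpha(v_k)$, and because $\mathrm{Loc}(\KqGr{k}{n})$ is a $\integ$-graded domain, the element $\alpha(U)+\alpha(W)$ must itself be homogeneous; hence $d_U = d_W$, i.e.\ $\theta(U)=\theta(W)$, and then $\alpha(v')$ sits in the component of degree $\theta(U) - \theta(M(v_k)) = \max\{\theta(U),\theta(W)\} - \theta(M(v_k))$, which by the categorical mutation rule is exactly $\theta(M(v'))$. One still has to check that the homogeneous degree-zero part of $\mathrm{Loc}(\KqGr{k}{n})$ is detected correctly, i.e.\ that $\alpha(v')\in\KqGr{k}{n}_{\theta(M(v'))}[1\cdots k]^{-\theta(M(v'))}$ and not merely in the larger space $\sum_{j\ge 0}\KqGr{k}{n}_{\theta(M(v'))+j}[1\cdots k]^{-(\theta(M(v'))+j)}$; this follows by noting that $\alpha(v')\cdot[1\cdots k]^{\theta(M(v'))}$ is obtained from the homogeneous polynomial $m_U + m_W \in \KqGr{k}{n}$ (where $\alpha(U) = m_U[1\cdots k]^{-d_U}$ etc.) after cancelling $\alpha(v_k) = m_{v_k}[1\cdots k]^{-\theta(M(v_k))}$, and this cancellation takes place inside $\KqGr{k}{n}$ because $m_{v_k}$ divides $m_U+m_W$ there (the quotient being a genuine polynomial, not a Laurent one, since $v'$ is a cluster variable and hence lies in $\KqMat{k}{n-k}$, whose image is the degree-zero part).

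The main obstacle I anticipate is precisely the bookkeeping in that last step: one must be careful that $\theta$, which is defined categorically as $\dim\Hom_{\Lambda}(M,S_j)$, genuinely matches the grading shift produced by $\alpha$ on \emph{all} cluster variables and not just on minors, and in particular that the identity $\theta(M(v_k)) + \theta(M(v')) = \max\{\theta(U),\theta(W)\}$ is compatible with divisibility in the graded domain $\KqGr{k}{n}$. This is where the fact that $\theta$ satisfies the same tropical/mutation recursion as the grading-shift exponents does the real work, and verifying that the two a priori different quantities ($\theta$ and the exponent of $[1\cdots k]$) obey the same recursion with the same initial conditions is the crux; once that is in place the induction closes. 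An alternative, slightly cleaner packaging is to observe that $v\mapsto (\text{exponent of }[1\cdots k]\text{ in }\alpha(v))$ is itself a function on cluster variables satisfying $f(v') = \max\{f(U),f(W)\} - f(v_k)$ with $f\equiv 1$ on the initial cluster, and then invoke uniqueness of such a ``$\max$-propagated'' datum to conclude $f = \theta\circ M$; but spelling out why $f$ satisfies that recursion still amounts to the argument above.
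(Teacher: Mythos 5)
Your overall strategy — identify the exponent of $[1\cdots k]^{-1}$ in $\alpha(v)$ with $\theta(M(v))$ by showing that both are determined by the same ``$\max$''-recursion with the same initial values — is exactly the paper's, and your final paragraph isolates it nicely. However, the justification you give for the recursion contains a genuine error. You assert that since $\mathrm{Loc}(\KqGr{k}{n})$ is a $\integ$-graded domain and $\alpha(v')\alpha(v_k)=\alpha(U)+\alpha(W)$, the sum $\alpha(U)+\alpha(W)$ must be homogeneous and therefore $d_U=d_W$, i.e.\ $\theta(U)=\theta(W)$. This is false, and the paper explicitly records a counterexample: at the mutable vertex $(1,1)$ in the initial cluster one has $\theta(U)\neq\theta(W)$ (this is precisely the discussion on pages \pageref{page:theta-start}--\pageref{page:theta-end} explaining why $\theta$ is \emph{not} a grading). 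Moreover, the homogeneity argument is vacuous for the relevant grading: every element appearing in the exchange relation lies in $\mathrm{Loc}(\KqGr{k}{n})_{0}$, because $\alpha$ maps all of $\KqMat{k}{n-k}$ into the degree-$0$ component, and the decomposition $\mathrm{Loc}(\KqGr{k}{n})_0=\sum_{j\geq 0}\KqGr{k}{n}_j[1\cdots k]^{-j}$ is \emph{not} direct. So the $\integ$-grading on $\mathrm{Loc}(\KqGr{k}{n})$ carries no information about the exponent $d$, and no graded-domain argument can force $d_U=d_W$.

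The correct argument, which the paper uses, avoids the equality $d_U=d_W$ altogether. One works with the lowest common denominator: writing $N_\pm = S_\pm[1\cdots k]^{-d(N_\pm)}$ with $S_\pm\in\KqGr{k}{n}$ and $d(N_\pm)$ minimal, the sum $N_++N_-$ is an element of $\KqGr{k}{n}$ times $[1\cdots k]^{-\max\{d(N_+),d(N_-)\}}$, and (when $d(N_+)\neq d(N_-)$) this maximum is visibly the minimal exponent because exactly one of the two rewritten numerators is not divisible by $[1\cdots k]$. The recursion $d(X_i')=\max\{d(N_+),d(N_-)\}-d(X_i)$ then follows directly, and matching this with the categorical formula $\theta(M')=\max\{\theta(U),\theta(W)\}-\theta(M)$ closes the induction. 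You should replace the ``homogeneity forces $d_U=d_W$'' step with this lowest-common-denominator argument.
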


\begin{proof} We argue by induction on the length of mutation sequences.  Firstly, we see that the claim holds for elements of the initial cluster $\curly{M}(k,n-k)$ by the observation that $\theta(V_{(\alpha,\beta)})=1$ for all $\alpha$ and $\beta$ and by Corollary~\ref{c:imageofICunderalpha}.  

Note that the localisation at hand, $\mathrm{Loc}(\KqGr{k}{n})$, is constructed from $\KqGr{k}{n}$ by localisation at an Ore set that consists of (positive integer) powers of a single element, $[1\cdots k]$.  Since in any localisation a finite set of elements has a common denominator, for any finite set of elements $A_{1},\ldots,A_{r}$ of $\mathrm{Loc}(\KqGr{k}{n})$ there exists a positive integer $m$ such that there exist $B_{1},\ldots,B_{r} \in \KqGr{k}{n}$ with $A_{i}=B_{i}[1\cdots k]^{-m}$.  Furthermore, we can choose $m$ to be the (unique) least such positive integer, so that we may speak of lowest common denominators in this localisation.  

In particular, this holds for $r=1$, so that every element $A$ of $\mathrm{Loc}(\KqGr{k}{n})$ has a unique expression as $B[1\cdots k]^{-d(A)}$ with $B\in \KqGr{k}{n}$ and $d(A)$ minimal.  Equivalently, there exists a unique smallest $d(A)$ such that $A$ is an element of the subspace $\KqGr{k}{n}_{d(A)}[1\cdots k]^{-d(A)}$ of $\mathrm{Loc}(\KqGr{k}{n})$.  

Since $[1\cdots k]\in \KqGr{k}{n}$, we see that this implies that $A\in \KqGr{k}{n}_{j}[1\cdots k]^{-j}$ for all $j\geq d(A)$ since \[ A=B[1\cdots k]^{-d(A)}=(B[1\cdots k]^{j-d(A)})[1\cdots k]^{-j}. \] This is why the decomposition $\mathrm{Loc}(\KqGr{k}{n})=\sum_{j\geq 0} \KqGr{k}{n}_{j}[1\cdots k]^{-j}$ is not a direct sum decomposition.

\pagebreak
Now, assume that the claim holds for some cluster $\curly{N}$ mutation-equivalent to $\curly{M}(k,n-k)$.  Let $X_{1},\dotsc ,X_{r}$ be the quantum cluster variables appearing in $\curly{N}$.  Then the mutation of $X_{i}$, say, is computed by taking the sum of the two relevant exchange monomials.  More precisely, recall that the exchange relations take the form 
\[ X_{i}^{\prime}=M(\underline{b}_{i}^{+})+M(\underline{b}_{i}^{-}) \]
with
\[ M(a_{1},\dotsc ,a_{r}) \defeq q^{\frac{1}{2}\sum_{u<v} a_{u}a_{v}l_{vu}}X_{1}^{a_{1}}\dotsm X_{r}^{a_{r}} \]
and the integers $a_{i}$ are all non-negative except for $a_{k}=-1$.  It is convenient to observe, however, that since $X_{i}$ quasi-commutes with every other element of the cluster, so does its inverse and we can re-write the exchange relation in the form
\[ X_{i}^{\prime}X_{i}=N_{+}+N_{-} \]
by quasi-commuting $X_{i}^{-1}$ to the right-hand side of each monomial $M(\underline{b}_{i}^{+})$ and $M(\underline{b}_{i}^{-})$ and multiplying through.  Of course, this changes the powers of $q$ appearing in front of the monomials but for the present argument this does not matter.  Now we see that we may use the inductive hypo\-thesis and the existence of lowest common denominators to first write $N_{\pm}=S_{\pm}[1\cdots k]^{-d(N_{\pm})}$ with $S_{\pm}\in \KqGr{k}{n}$ and $d(N_{\pm})$ positive integers and then to write $N_{+}+N_{-}$ as a product of an element of $\KqGr{k}{n}$ with some power $m$ of $[1\cdots k]^{-1}$.  Furthermore it is clear that the minimal such $m$ is equal to the maximum of the powers $d(N_{+})$ and $d(N_{-})$.  Hence the exchange relation tells us that when we write $X_{i}=T_{i}[1\cdots k]^{-d(X_{i})}$ and  $X_{i}^{\prime}=T^{\prime}_{i}[1\cdots k]^{-d(X^{\prime}_{i})}$ with $T_{i},T_{i}^{\prime}\in \KqGr{k}{n}$, $d(X^{\prime}_{i})$ is equal to $\max\{ d(N_{+}),d(N_{-})\}-d(X_{i})$.

However, we have seen this formula previously: it is precisely the formula determining the values of $\theta$ by repeated mutation.  Since the integers given by $\theta$ (the dimension of the top of the corresponding module) and $d$ (the minimal positive integer described above) take the same initial values---this being the base case for our induction---and since they mutate by identical formul\ae, we see that they agree on all quantum cluster variables.  This proves the theorem.
\end{proof}

A more direct argument can be made for the quantum cluster variables for $\KqMat{k}{n-k}$ whose image under $\alpha$ is a quantum Pl\"{u}cker coordinate multiplied by some power of $[1\cdots k]^{-1}$.  For we may observe that as we range over all possible indexing sets $I$ and $J$ of quantum minors in $\KqMat{k}{n-k}$, the collection of sets $Q_{1}(I,J)$ ranges over \emph{all} $k$-subsets of $\{1,\dotsc ,n\}$.  Then since Corollary~12.4 of \cite{GLS-QuantumPFV} tells us that every quantum minor does occur as a quantum cluster variable in the quantum cluster algebra structure for $\KqMat{k}{n-k}$, we see the following.

\begin{corollary}\label{c:PluckersAreQCVs} For each $k$-subset $I$ of $\{1,\dotsc ,n\}$, we have that $[I][1\cdots k]^{-1}$ is a quantum cluster variable in the above quantum cluster algebra structure on $\mathrm{Loc}(\KqGr{k}{n})$. \qed
\end{corollary}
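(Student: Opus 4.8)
The plan is to read this off from Lemma~\ref{l:alphaofminor} together with Corollary~12.4 of \cite{GLS-QuantumPFV} and the fact, established in the proof that $\mathrm{Loc}(\KqGr{k}{n})$ is a graded quantum cluster algebra, that the isomorphism $\alpha$ carries the quantum cluster algebra structure on $\KqMat{k}{n-k}[Y^{\pm 1};\sigma]$ onto that on $\mathrm{Loc}(\KqGr{k}{n})$. Since an algebra isomorphism that matches initial seeds sends quantum cluster variables to quantum cluster variables, the only thing to check is that every $k$-subset of $\{1,\dotsc ,n\}$ arises as $Q_{1}(I,J)$ for some pair of index sets $I,J$ of a quantum minor in $\KqMat{k}{n-k}$.

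First I would fix a $k$-subset $S$ of $\{1,\dotsc ,n\}$ and exhibit $I,J$ with $Q_{1}(I,J)=S$. Write $S=S_{\leq k}\disjointunion S_{>k}$ with $S_{\leq k}=S\intersection\{1,\dotsc ,k\}$ and $S_{>k}=S\intersection\{k+1,\dotsc ,n\}$, and put $t=\card{S_{>k}}$, so $\card{S_{\leq k}}=k-t$. Because every element of $S_{>k}$ lies in $\{k+1,\dotsc ,n\}$, subtracting $k$ produces a $t$-subset $J=\{\, j-k \mid j\in S_{>k}\,\}$ of $\{1,\dotsc ,n-k\}$ with no modular wrap-around, so that $\{\,\widetilde{j_{1}+k},\dotsc ,\widetilde{j_{t}+k}\,\}=S_{>k}$; this is exactly the point at which the choice $a=1$ for the Lenagan--Russell map is used. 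The involution $i\mapsto k-i+1$ of $\{1,\dotsc ,k\}$ then determines a unique $t$-subset $I$ of $\{1,\dotsc ,k\}$ with $\{\, k-i+1 \mid i\in I\,\}=\{1,\dotsc ,k\}\setminus S_{\leq k}$. By construction $Q_{1}(I,J)=S$, and since $t\leq k$ and $t\leq n-k$ the symbol $\minor{I}{J}$ is a genuine quantum minor in $\KqMat{k}{n-k}$.

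Now Corollary~12.4 of \cite{GLS-QuantumPFV} guarantees that $\minor{I}{J}$ occurs as a quantum cluster variable in the quantum cluster algebra structure on $\KqMat{k}{n-k}$ with initial cluster $\curly{M}(k,n-k)$, hence also in the extension by the coefficients $Y^{\pm 1}$ of Proposition~\ref{p:skewLaurent-extra-coeffs}. Applying $\alpha$, which matches this initial data with $(\curly{L}(k,n),B^{\mathrm{Loc}}(k,n),L^{\mathrm{Loc}}(k,n),G^{\mathrm{Loc}}(k,n))$, we conclude that $\alpha(\minor{I}{J})$ is a quantum cluster variable of $\mathrm{Loc}(\KqGr{k}{n})$; and by Lemma~\ref{l:alphaofminor} this element equals $[Q_{1}(I,J)][1\cdots k]^{-1}=[S][1\cdots k]^{-1}$, as required. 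In the degenerate case $S=\{1,\dotsc ,k\}$ one has $t=0$ and $[S][1\cdots k]^{-1}=1$; here $[1\cdots k]$ and $[1\cdots k]^{-1}$ are already the frozen cluster variables of $\curly{L}(k,n)$, so no separate argument is needed.

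The whole argument is bookkeeping, and I expect no genuine obstacle: the one place demanding care is the surjectivity of $(I,J)\mapsto Q_{1}(I,J)$ onto $k$-subsets of $\{1,\dotsc ,n\}$, which comes down to the cardinality identity $(k-t)+t=k$ and the absence of wrap-around for $a=1$, both verified above.
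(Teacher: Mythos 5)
Your argument matches the paper's: both rest on the observation that every $k$-subset of $\{1,\dotsc,n\}$ is realised as $Q_{1}(I,J)$ for some quantum minor $\minor{I}{J}$ in $\KqMat{k}{n-k}$, combined with Corollary~12.4 of \cite{GLS-QuantumPFV} and Lemma~\ref{l:alphaofminor}; the paper states the surjectivity of $(I,J)\mapsto Q_{1}(I,J)$ as a bare observation, whereas you spell out the inverse construction explicitly, which is a worthwhile addition. One small caveat: for $S=\{1,\dotsc,k\}$ your argument produces the empty minor and the element $1$, which is not itself a quantum cluster variable; the brush-off ``no separate argument is needed'' is not quite a justification, although the paper's own one-sentence proof has the same blind spot and the downstream use of the corollary does not depend on this degenerate case.
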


\noindent Here, $[I]$ is the quantum Pl\"{u}cker coordinate corresponding to $I$.  Note that this is consistent with the above general theorem, since the corresponding modules have simple tops; the latter follows from the fact that this is true for the modules $V_{(\alpha,\beta)}$, this itself being a feature of being in type~$A$.

\pagebreak
\section{A quantum cluster algebra structure on the quantum Grassmannian}\label{s:QCAonKqGr}

The\label{page:fix-theta-start} final step is to use Theorem~\ref{t:rescaledQCA} to re-scale the quantum cluster variables appearing in the above quantum cluster algebra structure on $\mathrm{Loc}(\KqGr{k}{n})$ to eliminate the inverse of the minor $[1\cdots k]$ that appears.  By doing so, we will see that all the re-scaled quantum cluster variables in fact lie in $\KqGr{k}{n}$, which together with Corollary~\ref{c:PluckersAreQCVs} will imply that we have a (graded) quantum cluster algebra structure on $\KqGr{k}{n}$, since the Pl\"{u}cker coordinates generate $\KqGr{k}{n}$.

From the previous section, notably Theorem~\ref{t:thetaisscalingpower}, we know that the power of $[1\cdots k]$ appearing in any quantum cluster variable is exactly given by $\theta$, the dimension of the top of the corresponding module in the categorification.  We would like to apply our re-scaling theorem, Theorem~\ref{t:rescaledQCA}, but we cannot do so directly because as we noted before $\theta$ is not a grading.  Therefore our first task is to fix this.

More concretely, we will alter slightly the initial data in order to correct the inhomogeneity at the position $(1,1)$.  For we observe that at every mutable index $(\alpha,\beta)$ except the top-left, i.e.\ except at $(1,1)$, the exchange quiver has the \emph{same} number of incoming and outgoing arrows.  In other words, the exchange matrix $B=B^{\mathrm{Loc}}(k,n)$ admits a grading by the vector $\underline{a}=(1,\dotsc ,1,-1,1)\in \integ^{k(n-k)+2}$ except for at the index $(1,1)$.  We would like to use this grading $\underline{a}$ as one of our input data to Theorem~\ref{t:rescaledQCA} but we need to homogenise $B$ at $(1,1)$ in order to do so.  We note that $\underline{a}$ ends with the values $-1$ and $1$ in order to reflect the natural grading on $\mathrm{Loc}(\KqGr{k}{n})$ in terms of the power of $[1\dotsm k]^{-1}$ occurring in our expressions for the quantum cluster variables.

We observe that the grading $\underline{a}$ does correspond to $\theta$, at least away from the coefficients $[1\cdots k]^{\pm 1}$.  This is as expected, for this categorical data $\theta$ is exactly what is used in \cite[\S 10]{GLS-PFV} to make the lifting work classically.  The classical version uses quotients rather than localisations, as we must, but we can see that reinterpreting \cite[\S 10]{GLS-PFV} in terms of localisations gives rise to the analogue of what we do here.

The underlying reason for the choice we will make below, and its classical analogue in \cite[\S 10]{GLS-PFV}, is geometric.  This is explained in the discussion after Theorem~4.14 of \cite{GSV-Book}.  That theorem describes the passage from the corresponding classical (commutative) cluster algebra structure on $\mathbb{K}[\mathrm{Mat}(k,n-k)]$ to one on $\mathbb{K}[\mathrm{Gr}(k,n)]$ in more concrete terms than those used in \cite[\S 10]{GLS-PFV}, where the general result was the focus.  (We note that we do not directly rely on the classical result but do indirectly, in that results we use from \cite{GLS-QuantumPFV} rely on the existence of the classical cluster algebra structure on $\mathbb{K}[\mathrm{Mat}(k,n-k)]$.)

Hence we define a new initial datum as follows.  Let
\[ (\curly{L}=\curly{L}(k,n),B=B^{\mathrm{Loc}}(k,n),L=L^{\mathrm{Loc}}(k,n),G=G^{\mathrm{Loc}}(k,n)) \]
denote the initial data for the quantum cluster algebra structure on $\mathrm{Loc}(\KqGr{k}{n})$ described in the previous section.  Here $\curly{L}=\{ \curly{L}_{(r,s)} \mid 1\leq r\leq k,\ 1\leq s \leq n-k \} \union \{ \curly{L}_{(1,k+1)},\curly{L}_{(2,k+1)}\}$.  We add an additional coefficient (i.e. a non-mutable variable) $\curly{L}_{(0,0)}$ to the initial cluster $\curly{L}$, namely $\curly{L}_{(0,0)}=[1\dotsm k][1\dotsm k]^{-1}$.  Let us denote by $\hat{\curly{L}}$ the set $\curly{L} \union \{ \curly{L}_{(0,0)} \}$.  Of course, this additional element is simply the identity for the algebra $\mathrm{Loc}(\KqGr{k}{n})$ and as such it certainly quasi-commutes with every element of $\curly{L}$, giving that the corresponding quasi-commutation matrix $\hat{L}$ is constructed from $L$ by setting $\hat{L}_{(0,0),(0,0)}=0$, $\hat{L}_{(0,0),(r,s)}=\hat{L}_{(r,s),(0,0)}=0$ for all $(r,s)\in \{ 1\leq r \leq k, 1\leq s \leq n-k \} \union \{ (1,k+1),(2,k+1) \}$ and $\hat{L}_{(r_{1},s_{1}),(r_{2},s_{2})}=L_{(r_{1},s_{1}),(r_{2},s_{2})}$ whenever $(r_{1},s_{1}),(r_{2},s_{2})\neq (0,0)$.

Next we add an extra arrow to the exchange quiver, from $(0,0)$ to $(1,1)$, or equivalently define
\[ \hat{B}_{(r_{1},s_{1}),(r_{2},s_{2})} \defeq \begin{cases} 0 & \text{if}\ (r_{1},s_{1})=(0,0)\ \text{and}\ (r_{2},s_{2})=(0,0), \\ & \qquad \text{or}\ (r_{1},s_{1})=(0,0)\ \text{and}\ (r_{2},s_{2})\neq (1,1), \\ & \qquad \text{or}\ (r_{1},s_{1})\neq (1,1)\ \text{and}\ (r_{2},s_{2})=(0,0) \\ 1 & \text{if}\ (r_{1},s_{1})=(0,0)\ \text{and}\ (r_{2},s_{2})=(1,1) \\ -1 & \text{if}\ (r_{1},s_{1})=(1,1)\ \text{and}\ (r_{2},s_{2})=(0,0) \\ B_{(r_{1},s_{1}),(r_{2},s_{2})} & \text{otherwise.} \end{cases} \]

For a grading $\hat{G}$ we take $\hat{G}=(-1,\dotsc ,-1,1,-1)\in \integ^{k(n-k)+3}$.  Our reason for doing so is that $\hat{G}$ records the power of $[1\dotsm k]$ occurring in the initial cluster variables as shown in the previous section, namely $-1$ except for the coefficient $[1\dotsm k]$.  This is indeed still a grading, as is easily checked.  We will not forget the data in the original grading $G$: it will also be used when we apply Theorem~\ref{t:rescaledQCA}, which takes \emph{two} gradings among its inputs.

Now it is straightforward to check that $(\hat{\curly{L}},\hat{B},\hat{L},\hat{G})$ is valid initial data for a graded quantum cluster algebra structure on $\mathrm{Loc}(\KqGr{k}{n})$.  Indeed, compatibility of $\hat{B}$ with $\hat{L}$ is immediate, since $\hat{L}$ contains only zeroes in the row and column indexed by $(0,0)$.  The grading condition holds by construction.

Furthermore, the quantum cluster variables obtained by iterated mutation from the initial seed $(\hat{\curly{L}},\hat{B},\hat{L},\hat{G})$ are equal to those obtained from $(\curly{L},B,L,G)$, since the new variable is simply the identity in $\mathrm{Loc}(\KqGr{k}{n})$ and as such has no effect whatsoever on any exchange monomials it appears in.  It is to this altered graded quantum cluster algebra structure on $\mathrm{Loc}(\KqGr{k}{n})$, with initial seed $(\hat{\curly{L}},\hat{B},\hat{L},\hat{G})$ that we will apply Theorem~\ref{t:rescaledQCA}.\label{page:fix-theta-end}

In Figure~\ref{fig:initialseedLoc36-homogenised}, we give this homogenised initial cluster for our running example with $k=3$ and $n=6$.

\begin{figure}[t]
\begin{center}
{\footnotesize
\scalebox{1}{\begin{tikzpicture}[node distance=2.5cm,on grid,>=angle 90]

\node (11) at (0,0) {$[236][123]^{-1}$}; 
\node (12) [right=of 11] {$[235][123]^{-1}$};
\node (13) [right=of 12,rectangle,draw=black,thick] {$[234][123]^{-1}$};

\node (21) [below=of 11] {$[136][123]^{-1}$}; 
\node (22) [right=of 21] {$[356][123]^{-1}$};
\node (23) [right=of 22,rectangle,draw=black,thick] {$[345][123]^{-1}$};

\node (31) [below=of 21,rectangle,draw=black,thick] {$[126][123]^{-1}$}; 
\node (32) [right=of 31,rectangle,draw=black,thick] {$[156][123]^{-1}$};
\node (33) [right=of 32,rectangle,draw=black,thick] {$[456][123]^{-1}$};

\node (00) [above left=of 11,rectangle,draw=black,thick] {$[123][123]^{-1}$};

\node (14) [right=of 13,rectangle,draw=black,thick] {$[123]$};
\node (24) [right=of 23,rectangle,draw=black,thick] {$[123]^{-1}$};

\draw[semithick,->] (11) to (12);
\draw[semithick,->] (12) to (13);

\draw[semithick,->] (21) to (22);
\draw[semithick,->] (22) to (23);

\draw[semithick,->] (11) to (21);
\draw[semithick,->] (21) to (31);

\draw[semithick,->] (12) to (22);
\draw[semithick,->] (22) to (32);

\draw[semithick,->] (22) to (11);
\draw[semithick,->] (23) to (12);

\draw[semithick,->] (32) to (21);
\draw[semithick,->] (33) to (22);

\draw[semithick,->] (00) to (11);

\end{tikzpicture}}
}
\end{center}
\caption{\label{fig:initialseedLoc36-homogenised}Homogenised initial cluster for a quantum cluster algebra structure on $\mathrm{Loc}(\KqGr{3}{6})$.}
\end{figure}

\begin{lemma} As in Proposition~\ref{p:dhom}, let $\sigma$ be the automorphism of $\KqMat{k}{n-k}$ defined by $\sigma(X_{ij})=qX_{ij}$.  Then there is an automorphism $\hat{\sigma}$ of $\KqMat{k}{n-k}[Y^{\pm 1}; \sigma]$ defined by $\hat{\sigma}|_{\KqMat{k}{n-k}}=\sigma$ and $\hat{\sigma}(Y)=Y$.
\end{lemma}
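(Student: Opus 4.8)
The plan is to present $\KqMat{k}{n-k}[Y^{\pm 1};\sigma]$ by generators and relations, check that the proposed assignment respects those relations, and then exhibit an explicit two-sided inverse. Recall that the skew-Laurent extension is generated by the $X_{ij}$ together with $Y,Y^{-1}$, subject to the defining relations of $\KqMat{k}{n-k}$, the relations $YY^{-1}=Y^{-1}Y=1$, and the skew-commutation relations $Ya=\sigma(a)Y$ for all $a\in\KqMat{k}{n-k}$ (consistent with the convention used in Lemma~\ref{l:Lminuswedge}).

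First I would verify that the rule $X_{ij}\mapsto \sigma(X_{ij})$, $Y\mapsto Y$, $Y^{-1}\mapsto Y^{-1}$ defines an algebra endomorphism $\hat\sigma$. The relations internal to $\KqMat{k}{n-k}$ are preserved because $\sigma$ is already an automorphism of that algebra; the relations $YY^{-1}=Y^{-1}Y=1$ are preserved because $Y^{\pm 1}$ are fixed. For the skew-commutation relation, applying the rule to $Ya=\sigma(a)Y$ yields $Y\,\sigma(a)=\sigma(\sigma(a))\,Y$, which is exactly the relation $Yb=\sigma(b)Y$ with $b=\sigma(a)$ and so already holds. Hence the rule respects all the defining relations, and since $Y$ and $Y^{-1}$ map to units, the universal property of the skew-Laurent extension produces a (unique) algebra endomorphism $\hat\sigma$ with $\hat\sigma|_{\KqMat{k}{n-k}}=\sigma$ and $\hat\sigma(Y)=Y$.

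Finally I would show $\hat\sigma$ is bijective by running the identical argument with $\sigma^{-1}$ in place of $\sigma$: the rule $X_{ij}\mapsto\sigma^{-1}(X_{ij})$, $Y\mapsto Y$, $Y^{-1}\mapsto Y^{-1}$ respects the relations (the relevant check being $Y\,\sigma^{-1}(a)=a\,Y$, which is the relation $Yb=\sigma(b)Y$ with $b=\sigma^{-1}(a)$) and hence defines an endomorphism $\hat\tau$. Since $\hat\sigma\hat\tau$ and $\hat\tau\hat\sigma$ restrict to the identity on the generating set $\{X_{ij}\}\cup\{Y^{\pm 1}\}$, they are the identity on all of $\KqMat{k}{n-k}[Y^{\pm 1};\sigma]$, so $\hat\sigma$ is an automorphism with inverse $\hat\tau$. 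I do not expect a genuine obstacle here; the only point worth flagging is that the skew-commutation relation is preserved precisely because $\sigma$ commutes with itself — more generally, any automorphism of $\KqMat{k}{n-k}$ commuting with $\sigma$ extends to the skew-Laurent extension while fixing $Y$, and we use only this tautological instance.
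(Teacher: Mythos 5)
Your proposal is correct and takes essentially the same approach as the paper, which simply notes that $\hat\sigma$ respects the relations $YX_{ij}=\sigma(X_{ij})Y$; you have spelled out that check and the inverse in more detail, but the underlying idea is identical.
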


\begin{proof} One easily sees that $\hat{\sigma}$ respects the relations $YX_{ij}=\sigma(X_{ij})Y$ in the skew-Laurent extension.
\end{proof}

\begin{corollary}\label{c:automtau} There is an automorphism of $\mathrm{Loc}(\KqGr{k}{n-k})$ defined by $\tau=\alpha \circ \hat{\sigma} \circ \alpha^{-1}$, where $\alpha$ is the dehomogenisation isomorphism. \qed
\end{corollary}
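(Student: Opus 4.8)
The plan is simply to observe that $\tau$ is, by construction, a conjugate of an automorphism by an isomorphism, so there is nothing to do beyond checking that the domains and codomains compose correctly. First I would recall that by Proposition~\ref{p:dhom} the dehomogenisation map
\[ \alpha\colon \KqMat{k}{n-k}[Y^{\pm 1};\sigma] \to \mathrm{Loc}(\KqGr{k}{n}) \]
is an algebra isomorphism, so $\alpha^{-1}$ exists and is an algebra isomorphism in the opposite direction. By the preceding Lemma, $\hat{\sigma}$ is an algebra automorphism of $\KqMat{k}{n-k}[Y^{\pm 1};\sigma]$. Hence the composite $\tau = \alpha \circ \hat{\sigma} \circ \alpha^{-1}$ first carries $\mathrm{Loc}(\KqGr{k}{n})$ to $\KqMat{k}{n-k}[Y^{\pm 1};\sigma]$, then to itself, then back to $\mathrm{Loc}(\KqGr{k}{n})$; being a composition of bijective algebra homomorphisms, it is a bijective algebra homomorphism of $\mathrm{Loc}(\KqGr{k}{n})$ to itself, i.e.\ an automorphism, with inverse $\alpha \circ \hat{\sigma}^{-1} \circ \alpha^{-1}$.

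There is essentially no obstacle: the statement is purely formal. If it is useful for the sequel, one can additionally record the explicit action of $\tau$ on generators by pushing the formulas of Proposition~\ref{p:dhom} through the conjugation. Since $\hat{\sigma}$ fixes $Y$ and multiplies each $X_{ij}$ by $q$, and since $\alpha(Y)=[12\cdots k]$ while $\alpha(X_{ij})=[1\cdots \widehat{k-i+1}\cdots k\,(j+k)][1\cdots k]^{-1}$, one obtains that $\tau$ fixes $[12\cdots k]$ and multiplies each element of the form $[1\cdots \widehat{k-i+1}\cdots k\,(j+k)][1\cdots k]^{-1}$ by $q$. This description (that $\tau$ acts by a power of $q$ on the images of the initial cluster variables) is exactly what will be needed to apply the skew-Laurent constructions of Section~\ref{s:gradedQCAs}, in particular Theorem~\ref{t:rescaledQCA}, to the graded quantum cluster algebra structure on $\mathrm{Loc}(\KqGr{k}{n})$.
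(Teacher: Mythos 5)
Your argument is exactly the one the paper has in mind: the corollary is immediate from the preceding lemma (that $\hat{\sigma}$ is an automorphism of $\KqMat{k}{n-k}[Y^{\pm 1};\sigma]$) and Proposition~\ref{p:dhom} (that $\alpha$ is an isomorphism), since conjugating an automorphism by an isomorphism yields an automorphism of the codomain, which is why the paper records it with \qed and no written proof. Your additional computation of the action of $\tau$ on generators is a correct and helpful observation that the paper makes separately in the discussion following the corollary, where it identifies $\tau$ as the automorphism induced by $\hat{\underline{t}}$.
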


Denote by $\hat{\underline{t}}$ the vector with $\hat{\underline{t}}_{(0,0)}=0$ and $\hat{\underline{t}}_{(r,s)}=G_{(r,s)}$.  Then the vector $\hat{\underline{t}}$ described a grading for $\hat{B}$ above, since $G$ was a grading for $B$.  The choice of $\hat{\underline{t}}_{(0,0)}=0$ is the unique one such that $\hat{\underline{t}}$ is indeed an extension of the grading $G$ to a grading for $\hat{B}$ but is also consistent with the natural degree of $\hat{L}_{(0,0)}=[1\dotsm k][1\dots k]^{-1}=1$ being zero.

Recall that the grading $G$ describes precisely the degree of the initial quantum cluster variables for the quantum cluster algebra structure on $\KqMat{k}{n-k}$, where ``degree'' means as a homogeneous polynomial in the matrix generators.  Indeed we saw that the quantum cluster algebra structure on $\KqMat{k}{n-k}$ is precisely graded by this natural grading.  Then it is clear from its definition that $\sigma$ acts by multiplication by $q$ to this degree, as we noted previously.  Passing this through the isomorphism $\alpha$, we see that $\tau$ is exactly the automorphism induced by $\hat{\underline{t}}$, or equivalently that $\hat{\underline{t}}$ may be recovered from $\tau$.

Now we apply Theorem~\ref{t:rescaledQCA}.  

\begin{proposition}\label{p:rescaledLoctilde} Let $\mathrm{Loc}(\KqGr{k}{n})$ have the graded quantum cluster algebra structure induced by the initial seed $(\hat{\curly{L}},\hat{B},\hat{L},\hat{G})$.  Then there exists a graded quantum cluster algebra structure on a subalgebra $\widetilde{\mathrm{Loc}}(\KqGr{k}{n})$ of the skew-Laurent extension $\mathrm{Loc}(\KqGr{k}{n})[Z^{\pm 1};\tau]$ with initial data
\begin{itemize}
\item $\tilde{\curly{L}}=\{ [1\dotsm k][1\dotsm k]^{-1}Z \} \union \{ \tilde{\curly{L}}_{(r,s)}=q^{\hat{\underline{t}}_{(r,s)}/2}\hat{\curly{L}}_{(r,s)}Z \mid 1\leq r\leq k,\ 1\leq s\leq n-k \} \union \{ q^{1/2}[1\dotsm k]Z^{-1}, q^{-1/2}[1\dotsm k]^{-1}Z \}$
\item $\tilde{B}=\hat{B}$,
\item $\tilde{L}$ satisfies $\tilde{L}_{(r_{1},s_{1}),(r_{s},s_{2})}=\hat{L}_{(r_{1},s_{1}),(r_{2},s_{2})}+\hat{\underline{t}}_{(r_{2},s_{2})}-\hat{\underline{t}}_{(r_{1},s_{1})}$, and
\item $\tilde{G}=0$.
\end{itemize}
Here the automorphism $\tau$ is as described in the previous lemma, inducing the grading $\hat{\underline{t}}$.
\end{proposition}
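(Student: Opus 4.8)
The plan is to deduce this as a direct application of Theorem~\ref{t:rescaledQCA} to the graded quantum cluster algebra $\mathrm{Loc}(\KqGr{k}{n})$ carrying the homogenised initial seed $(\hat{\curly{L}},\hat{B},\hat{L},\hat{G})$, with the two input gradings taken to be $\underline{t}=\hat{\underline{t}}$ and $\underline{u}=-\hat{G}$ (equivalently, $\underline{u}$ is the vector $\underline{a}=(1,\dotsc ,1,-1,1)$ completed by the value $1$ at the extra index $(0,0)$).

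First I would check that the hypotheses of Theorem~\ref{t:rescaledQCA} are satisfied. That $(\hat{\curly{L}},\hat{B},\hat{L},\hat{G})$ is valid initial data for a graded quantum cluster algebra on $\mathrm{Loc}(\KqGr{k}{n})$ has already been established above. The vector $\hat{\underline{t}}$ is a grading for $\hat{B}$, as observed above. The vector $-\hat{G}$ is a grading for $\hat{B}$ simply because $\hat{G}$ is and the set of gradings for a fixed exchange matrix is stable under negation; it is worth stressing that this step is the reason for passing from $B^{\mathrm{Loc}}(k,n)$ to $\hat{B}$ in the first place, since $\underline{a}$ fails the homogeneity condition precisely at the vertex $(1,1)$ and the added arrow $(0,0)\to(1,1)$, together with the value of $\hat{G}$ at $(0,0)$, is exactly what cancels this failure. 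Finally I would invoke the identification, made in the paragraph preceding the statement, of the automorphism $\tau$ of Corollary~\ref{c:automtau} with the automorphism induced by $\hat{\underline{t}}$: this uses that $\sigma$ rescales each initial cluster variable $\curly{M}_{k(n-k)}(r,s)$ by $q^{\deg(\curly{M}_{k(n-k)}(r,s))}=q^{\min(r,s)}$ while fixing $Y^{\pm 1}$, and transports this through the dehomogenisation isomorphism $\alpha$ of Proposition~\ref{p:dhom}.

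With the hypotheses in place, Theorem~\ref{t:rescaledQCA} produces a graded quantum cluster algebra $\tilde{\curly{A}}_{q}^{\hat{\underline{t}},-\hat{G}}=\curly{A}_{q}(\underline{\tilde{x}},\tilde{B},\tilde{L},\tilde{G})$ realised as a subalgebra of the skew-Laurent extension $\mathrm{Loc}(\KqGr{k}{n})[Z^{\pm 1};\tau]$, and I would set $\widetilde{\mathrm{Loc}}(\KqGr{k}{n})$ to be this subalgebra. The listed initial data is then obtained by substituting $\underline{t}=\hat{\underline{t}}$ and $\underline{u}=-\hat{G}$ into the formulas of that theorem: the cluster entries are $q^{\hat{\underline{t}}_{I}(-\hat{G})_{I}/2}\hat{\curly{L}}_{I}Z^{(-\hat{G})_{I}}$, which for $I=(r,s)$ with $1\le r\le k$, $1\le s\le n-k$ equals $q^{\hat{\underline{t}}_{(r,s)}/2}\hat{\curly{L}}_{(r,s)}Z$, for $I=(0,0)$ equals $[1\dotsm k][1\dotsm k]^{-1}Z$, and for the two coefficient indices gives the stated rescalings of $[1\dotsm k]^{\pm 1}$; moreover $\tilde{B}=\hat{B}$; $\tilde{L}=\hat{L}-\hat{\underline{t}}\wedge(-\hat{G})$, which by the proof of Lemma~\ref{l:Lminuswedge} has entries $\hat{L}_{(r_{1},s_{1}),(r_{2},s_{2})}+\hat{\underline{t}}_{(r_{2},s_{2})}-\hat{\underline{t}}_{(r_{1},s_{1})}$; and $\tilde{G}=\hat{G}+(-\hat{G})=0$.

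The argument is therefore almost entirely a matter of unwinding definitions; the one point that needs genuine care, and which I regard as the heart of the matter, is the identification of the concretely defined automorphism $\tau=\alpha\circ\hat{\sigma}\circ\alpha^{-1}$ with the automorphism attached abstractly to the grading $\hat{\underline{t}}$, since it is this that licenses the choice of inputs to Theorem~\ref{t:rescaledQCA}. Beyond that, the only delicate bookkeeping is tracking the powers of $q$ that arise on the two coefficient variables $[1\dotsm k]$ and $[1\dotsm k]^{-1}$ when the cluster-rescaling formula is evaluated at those indices.
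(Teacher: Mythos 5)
Your proposal is correct and takes exactly the same route as the paper: apply Theorem~\ref{t:rescaledQCA} to the homogenised seed $(\hat{\curly{L}},\hat{B},\hat{L},\hat{G})$ with input gradings $\underline{t}=\hat{\underline{t}}$ and $\underline{u}=-\hat{G}$, then read off the resulting initial data. You supply more of the hypothesis-checking explicitly (validity of the homogenised seed, both vectors being gradings, identification of $\tau$ with the automorphism induced by $\hat{\underline{t}}$), but the paper's proof is the same argument in condensed form.
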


\begin{proof} We apply Theorem~\ref{t:rescaledQCA} with $\underline{t}=\hat{\underline{t}}$ and $\underline{u}=-\hat{G}$.  As noted above, both are gradings for $\hat{B}$.  Then one easily checks that $(\underline{t}\wedge \underline{u})_{ij}=t_{j}-t_{i}$, giving the form of $\tilde{L}$ as stated, and we have $\tilde{G}=\hat{G}+(-\hat{G})=0$.
\end{proof}

Indeed, applying Corollary~\ref{c:formofrescaledQCV} to this setting, we have that the quantum cluster variables for this quantum cluster algebra structure on $\widetilde{\mathrm{Loc}}(\KqGr{k}{n})$ are in bijection with those of $\mathrm{Loc}(\KqGr{k}{n})$ and furthermore the former have the form of a product of a power of $q$, a quantum cluster variable for $\mathrm{Loc}(\KqGr{k}{n})$ and a power of $Z$.

Thus for our running example $k=3$ and $n=6$ we have as initial cluster for the quantum cluster algebra $\widetilde{\mathrm{Loc}}(\KqGr{3}{6})$ that shown in Figure~\ref{fig:initialseedLoc36-homog-and-rescaled}.  The values of $\hat{\underline{t}}$ giving the powers of $q$ appearing are derived by reading off from Figure~\ref{fig:initialseed33} the degrees of the corresponding variables as homogeneous polynomials in the matrix generators.  So for example $\hat{\underline{t}}_{(2,2)}=2$ since $[356][123]^{-1}=\alpha(\minor{23}{23})$ and the latter has degree 2.

\begin{figure}[t]
\begin{center}
{\footnotesize
\scalebox{1}{\begin{tikzpicture}[node distance=3cm,on grid,>=angle 90]

\node (11) at (0,0) {$q^{1/2}[236][123]^{-1}Z$}; 
\node (12) [right=of 11] {$q^{1/2}[235][123]^{-1}Z$};
\node (13) [right=of 12,rectangle,draw=black,thick] {$q^{1/2}[234][123]^{-1}Z$};

\node (21) [below=of 11] {$q^{1/2}[136][123]^{-1}Z$}; 
\node (22) [right=of 21] {$q[356][123]^{-1}Z$};
\node (23) [right=of 22,rectangle,draw=black,thick] {$q[345][123]^{-1}Z$};

\node (31) [below=of 21,rectangle,draw=black,thick] {$q^{1/2}[126][123]^{-1}Z$}; 
\node (32) [right=of 31,rectangle,draw=black,thick] {$q[156][123]^{-1}Z$};
\node (33) [right=of 32,rectangle,draw=black,thick] {$q^{3/2}[456][123]^{-1}Z$};

\node (00) [above left=of 11,rectangle,draw=black,thick] {$[123][123]^{-1}Z$};

\node (14) [right=of 13,rectangle,draw=black,thick] {$q^{1/2}[123]Z^{-1}$};
\node (24) [right=of 23,rectangle,draw=black,thick] {$q^{-1/2}[123]^{-1}Z$};

\draw[semithick,->] (11) to (12);
\draw[semithick,->] (12) to (13);

\draw[semithick,->] (21) to (22);
\draw[semithick,->] (22) to (23);

\draw[semithick,->] (11) to (21);
\draw[semithick,->] (21) to (31);

\draw[semithick,->] (12) to (22);
\draw[semithick,->] (22) to (32);

\draw[semithick,->] (22) to (11);
\draw[semithick,->] (23) to (12);

\draw[semithick,->] (32) to (21);
\draw[semithick,->] (33) to (22);

\draw[semithick,->] (00) to (11);

\end{tikzpicture}}
}
\end{center}
\caption{\label{fig:initialseedLoc36-homog-and-rescaled}Initial cluster for a quantum cluster algebra structure on $\widetilde{\mathrm{Loc}}(\KqGr{3}{6})$.}
\end{figure}

\begin{lemma}\label{l:centralelt} The element $q^{-1/2}[1\dotsm k]^{-1}Z$ is central in $\mathrm{Loc}(\KqGr{k}{n})[Z^{\pm 1};\tau]$ and hence is a central coefficient in the quantum cluster algebra $\widetilde{\mathrm{Loc}}(\KqGr{k}{n})$.
\end{lemma}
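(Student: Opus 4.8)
The plan is to pin down the automorphism $\tau$ appearing in the skew-Laurent extension as an \emph{inner} automorphism of $\mathrm{Loc}(\KqGr{k}{n})$; once this is done, centrality of $q^{-1/2}[1\dotsm k]^{-1}Z$ is a one-line verification. Recall from Corollary~\ref{c:automtau} that $\tau=\alpha\circ\hat{\sigma}\circ\alpha^{-1}$, where $\hat{\sigma}$ is the automorphism of $\KqMat{k}{n-k}[Y^{\pm 1};\sigma]$ restricting to $\sigma$ on $\KqMat{k}{n-k}$ and fixing $Y$, and where $\alpha(Y)=[1\dotsm k]$. The first step is the observation that, inside the skew-Laurent extension $\KqMat{k}{n-k}[Y^{\pm 1};\sigma]$, the defining relation $Ya=\sigma(a)Y$ for $a\in\KqMat{k}{n-k}$ says precisely that conjugation by $Y$ restricts to $\sigma$ on $\KqMat{k}{n-k}$; and conjugation by $Y$ of course fixes $Y^{\pm 1}$. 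Since $\hat{\sigma}$ has these same two properties and the elements $X_{ij}$ together with $Y^{\pm1}$ generate the skew-Laurent extension, $\hat{\sigma}$ must equal $\Ad_Y$, conjugation by $Y$. Transporting through the isomorphism $\alpha$ and using that $\alpha\,\Ad_Y\,\alpha^{-1}=\Ad_{\alpha(Y)}$ for any algebra isomorphism, we obtain $\tau=\Ad_{[1\dotsm k]}$, i.e. $\tau(X)=[1\dotsm k]\,X\,[1\dotsm k]^{-1}$ for all $X\in\mathrm{Loc}(\KqGr{k}{n})$; in particular $\tau$ fixes $[1\dotsm k]^{\pm1}$.

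With this in hand, set $w=q^{-1/2}[1\dotsm k]^{-1}Z$. For $X\in\mathrm{Loc}(\KqGr{k}{n})$, using $ZX=\tau(X)Z$ and the description of $\tau$ above,
\begin{align*}
wX &= q^{-1/2}[1\dotsm k]^{-1}ZX = q^{-1/2}[1\dotsm k]^{-1}\tau(X)Z \\
 &= q^{-1/2}[1\dotsm k]^{-1}\bigl([1\dotsm k]\,X\,[1\dotsm k]^{-1}\bigr)Z = q^{-1/2}X[1\dotsm k]^{-1}Z = Xw .
\end{align*}
Since $\tau$ fixes $[1\dotsm k]^{-1}$ we also get $Zw=q^{-1/2}\tau([1\dotsm k]^{-1})Z^{2}=q^{-1/2}[1\dotsm k]^{-1}Z^{2}=wZ$, hence $Z^{-1}w=wZ^{-1}$ as well. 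As $\mathrm{Loc}(\KqGr{k}{n})[Z^{\pm 1};\tau]$ is generated as a ring by $\mathrm{Loc}(\KqGr{k}{n})$ together with $Z^{\pm1}$, it follows that $w$ is central. Finally $w$ is exactly the initial coefficient $\tilde{\curly{L}}_{(2,k+1)}$ of the quantum cluster algebra $\widetilde{\mathrm{Loc}}(\KqGr{k}{n})$ of Proposition~\ref{p:rescaledLoctilde}, and $\widetilde{\mathrm{Loc}}(\KqGr{k}{n})$ is a subalgebra of the skew-Laurent extension, so $w$ is a central coefficient there.

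I expect essentially all the content to lie in the first step: everything after it is a formal manipulation. The one point requiring care is the (standard but crucial) fact that in a skew-Laurent extension $A[Y^{\pm 1};\sigma]$ conjugation by $Y$ realises $\sigma$ on $A$ — this is what allows us to recognise $\hat{\sigma}$ as inner and then push it through $\alpha$ to identify $\tau$. An alternative, more hands-on route would be to check centrality directly on the quantum Pl\"ucker generators $[I]$ of $\KqGr{k}{n}$, using $\tau([I])=[1\dotsm k][I][1\dotsm k]^{-1}$ (equivalently, the known quasi-commutation of the minor $[1\dotsm k]$ with every $[I]$, whose exponents necessarily match the action of $\sigma$ on the corresponding matrix generator); but expressing $\tau$ as $\Ad_{[1\dotsm k]}$ avoids any such case analysis.
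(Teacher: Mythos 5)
Your proposal is correct and follows essentially the same route as the paper's proof: both hinge on the observation that $\tau$ acts on $\mathrm{Loc}(\KqGr{k}{n})$ exactly as conjugation by $[1\dotsm k]$ does (the paper phrases this as ``$[1\dotsm k]$ and $Z$ satisfy the same quasi-commutation relations with $\alpha(\KqMat{k}{n-k})$'' plus $\tau([1\dotsm k])=[1\dotsm k]$; you package the same facts more crisply as $\hat{\sigma}=\Ad_Y$ and hence $\tau=\Ad_{[1\dotsm k]}$). Your explicit identification of $\tau$ as an inner automorphism makes the final centrality check a one-line computation on all of $\mathrm{Loc}(\KqGr{k}{n})$ without separating out the subalgebra $\alpha(\KqMat{k}{n-k})$ from the powers of $[1\dotsm k]$, which is a mild streamlining of the same idea rather than a different argument.
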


\begin{proof} We chose $\tau$ so that 
\[ \tau([1\dotsm k])=\alpha(\sigma(\alpha^{-1}([1\dotsm k])))=\alpha(\sigma(Y))=\alpha(Y)=[1\dotsm k], \] and hence $Z$ commutes with $[1\dotsm k]$, and so that $\alpha(Y)=[1\dotsm k]$ and $Z$ satisfy the same quasi-commutation relations with $\alpha(\KqMat{k}{n-k})$.  It follows that the stated element is central.
\end{proof}

\begin{corollary}\label{c:quotisQCA} The quotient algebra $\widetilde{\mathrm{Loc}}(\KqGr{k}{n})/(q^{-1/2}[1\dotsm k]^{-1}Z-1)$ inherits a graded quantum cluster algebra structure.
\end{corollary}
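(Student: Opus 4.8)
The plan is to exploit the fact, proved in Lemma~\ref{l:centralelt}, that $c := q^{-1/2}[1\dotsm k]^{-1}Z$ is a central coefficient of $\widetilde{\mathrm{Loc}}(\KqGr{k}{n})$, and to argue that passing to the quotient that sets a central coefficient (here, together with its inverse) equal to $1$ simply deletes those frozen indices from the seed and leaves a graded quantum cluster algebra. Note first that $c$ is the coefficient sitting at the index $(2,k+1)$ of the initial seed $(\tilde{\curly{L}},\tilde{B},\tilde{L},\tilde{G})$ of Proposition~\ref{p:rescaledLoctilde}, that its inverse $c^{-1}$ is, up to a scalar, the coefficient at $(1,k+1)$, and that both of these frozen vertices are isolated in the exchange quiver $Q(\tilde{B})$ (they carry no incident arrows, as one sees in Figure~\ref{fig:initialseedLoc36-homog-and-rescaled}); being frozen, they remain isolated after any sequence of mutations. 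Since $[1\dotsm k]$ is invertible in $\mathrm{Loc}(\KqGr{k}{n})[Z^{\pm1};\tau]$, the two-sided ideal $(c-1)$ coincides with the two-sided ideal $(Z-q^{1/2}[1\dotsm k])$.

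Next I would identify the quotient algebra. Let $\pi$ denote the quotient map; it is the restriction to $\widetilde{\mathrm{Loc}}(\KqGr{k}{n})$ of the substitution $Z\mapsto q^{1/2}[1\dotsm k]$ on $\mathrm{Loc}(\KqGr{k}{n})[Z^{\pm1};\tau]$, which is a well-defined algebra homomorphism because $q^{1/2}[1\dotsm k]$ is a unit obeying the same commutation relations with $\mathrm{Loc}(\KqGr{k}{n})$ as $Z$ does (exactly what is checked in the proof of Lemma~\ref{l:centralelt}), and which identifies $\mathrm{Loc}(\KqGr{k}{n})[Z^{\pm1};\tau]/(c-1)$ with $\mathrm{Loc}(\KqGr{k}{n})$. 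Put $\mathcal{B}:=\pi(\widetilde{\mathrm{Loc}}(\KqGr{k}{n}))\subseteq\mathrm{Loc}(\KqGr{k}{n})$. By Corollary~\ref{c:formofrescaledQCV}, every quantum cluster variable $v$ of $\widetilde{\mathrm{Loc}}(\KqGr{k}{n})$ has the form $q^{a}XZ^{b}$ with $X$ a quantum cluster variable of $\mathrm{Loc}(\KqGr{k}{n})$ and $b\in\integ$; substituting $Z=q^{1/2}[1\dotsm k]c$ and using the centrality of $c$ gives $v=\pi(v)\,c^{b}$, and since $c^{-1}\in\widetilde{\mathrm{Loc}}(\KqGr{k}{n})$ we deduce $\pi(v)=vc^{-b}\in\widetilde{\mathrm{Loc}}(\KqGr{k}{n})$. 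Hence $\mathcal{B}\subseteq\widetilde{\mathrm{Loc}}(\KqGr{k}{n})$ and $\widetilde{\mathrm{Loc}}(\KqGr{k}{n})=\mathcal{B}[c^{\pm1}]$; as $\mathrm{Loc}(\KqGr{k}{n})[Z^{\pm1};\tau]$ is a genuine skew-Laurent extension, graded by powers of $Z$ with $\mathcal{B}$ in degree $0$ and $c$ homogeneous of degree $1$, this is a genuine Laurent polynomial ring over $\mathcal{B}$ in the central variable $c$, so $(c-1)$ is exactly the kernel of evaluation at $c=1$ and $\widetilde{\mathrm{Loc}}(\KqGr{k}{n})/(c-1)\cong\mathcal{B}$.

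It then remains to equip $\mathcal{B}$ with a graded quantum cluster algebra structure. The candidate initial seed is obtained from $(\tilde{\curly{L}},\tilde{B},\tilde{L},\tilde{G})$ by deleting the rows and columns indexed by $(1,k+1)$ and $(2,k+1)$ and replacing the surviving initial cluster variables by their images under $\pi$ (equivalently, by substituting $Z\mapsto q^{1/2}[1\dotsm k]$). Deleting two isolated frozen indices keeps $\tilde{B}$ skew-symmetric; since $c$ and $c^{-1}$ are central the corresponding rows and columns of $\tilde{L}$ vanish, so the compatibility of $\tilde{B}$ with $\tilde{L}$ and the grading condition for $\tilde{G}=0$ are inherited verbatim. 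Deletion of a frozen index commutes with the matrix operations $\tilde{B}\mapsto E\tilde{B}\transpose{E}$, $\tilde{L}\mapsto\transpose{E}\tilde{L}E$ and $\tilde{G}\mapsto\transpose{E}\tilde{G}$ encoding mutation, and $c,c^{-1}$, being isolated at every stage, never appear in any exchange monomial; since $\pi$ is an algebra homomorphism it therefore carries each exchange relation of $\widetilde{\mathrm{Loc}}(\KqGr{k}{n})$ to the corresponding one of the deleted seed. By induction on the length of mutation sequences, $\pi$ maps the quantum cluster variables of $\widetilde{\mathrm{Loc}}(\KqGr{k}{n})$ other than $c$ and $c^{-1}$ bijectively onto those of the deleted seed; as these generate $\mathcal{B}$, the deleted seed presents $\mathcal{B}$ as a quantum cluster algebra, which is graded (trivially, by $\tilde{G}=0$) as claimed.

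The step I expect to be the real obstacle is the second one — verifying that $\widetilde{\mathrm{Loc}}(\KqGr{k}{n})/(c-1)$ is exactly $\mathcal{B}$ and not some further quotient of it — because in the noncommutative setting ``setting a coefficient to $1$'' is not automatically innocuous: one must check that $\widetilde{\mathrm{Loc}}(\KqGr{k}{n})$ genuinely is a Laurent extension of its image in the central variable $c$, and it is here that the skew-Laurent structure of $\mathrm{Loc}(\KqGr{k}{n})[Z^{\pm1};\tau]$ and the explicit shape of the quantum cluster variables furnished by Corollary~\ref{c:formofrescaledQCV} do the essential work.
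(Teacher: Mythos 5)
Your proof of the algebra identification is correct and considerably more careful than the paper's one-line assertion that the quotient of a quantum cluster algebra by a central coefficient is again a quantum cluster algebra ``with the natural quotient data''; your verification that $\widetilde{\mathrm{Loc}}(\KqGr{k}{n})$ really is a Laurent polynomial ring $\mathcal{B}[c^{\pm1}]$ over $\mathcal{B}:=\pi(\widetilde{\mathrm{Loc}}(\KqGr{k}{n}))$ in the central variable $c$, using the $Z$-grading and Corollary~\ref{c:formofrescaledQCV}, is exactly the point that needs checking in the noncommutative setting and that the paper suppresses. Likewise your observation that the two frozen vertices $(1,k+1)$ and $(2,k+1)$ are isolated (so never enter any exchange monomial and can be deleted from the seed with no effect on mutation) is the correct mechanism.

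Where you diverge from the paper, and where your proof actually loses content, is in the choice of grading on the quotient. You restrict the grading $\tilde{G}=0$ of Proposition~\ref{p:rescaledLoctilde} and conclude that the quotient is ``graded (trivially, by $\tilde{G}=0$)''. This is vacuous: every algebra is trivially $\integ$-graded, so it establishes nothing. The paper instead notes that $\widetilde{\mathrm{Loc}}(\KqGr{k}{n})$ is \emph{also} graded by $-\hat{G}$ --- this is automatic, since $-\hat{G}$ is a grading for $\hat{B}=\tilde{B}$ because $\hat{G}$ is --- and that, since the deleted indices $(1,k+1)$, $(2,k+1)$ are isolated in $Q(\tilde{B})$, the restriction of $-\hat{G}$ to the surviving indices is still a grading for the restricted exchange matrix. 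That restricted grading is $\underline{1}=(1,\dotsc,1)\in\integ^{k(n-k)+1}$. This is the grading the paper means by ``inherits a graded quantum cluster algebra structure'': it is precisely the one that, under the isomorphism of Theorem~\ref{t:quotisotoGr}, becomes the standard $\nat$-grading on $\KqGr{k}{n}$ with quantum Pl\"{u}cker coordinates in degree one, and it is used in the remark following that theorem. You should replace the final clause of your argument: rather than restricting $\tilde{G}=0$, restrict $-\hat{G}$ (which you may keep carrying through your re-scaled seeds as an auxiliary grading, as the paper in effect does), and observe that this restriction equals $\underline{1}$.
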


\begin{proof} It is straightforward to see that the quotient of a quantum cluster algebra by a central coefficient is again a quantum cluster algebra, with the natural quotient data.  Since the element $q^{-1/2}[1\dotsm k]^{-1}Z$ has degree 0 (as $\tilde{G}=0$), this quantum cluster algebra is again graded by the grading $-\hat{G}$, suitably restricted; indeed we see that this grading is equal to $\underline{1}=(1,\dotsc ,1)\in \integ^{k(n-k)+1}$.
\end{proof}

In particular, in this quotient the two coefficients $q^{1/2}[1\dotsm k]Z^{-1}$ and $q^{-1/2}[1\dotsm k]^{-1}Z$ are both identified with the identity and as such may be deleted from the quantum cluster algebra data with no effect, which we do.

\begin{theorem}\label{t:quotisotoGr} The quotient algebra $\widetilde{\mathrm{Loc}}(\KqGr{k}{n})/(q^{-1/2}[1\dotsm k]^{-1}Z-1)$ is isomorphic to $\KqGr{k}{n}$.  Hence the quantum Grassmannian $\KqGr{k}{n}$ admits a graded quantum algebra structure.
\end{theorem}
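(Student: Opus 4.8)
The plan is to follow the outline (D) of the introduction: first produce a surjective algebra homomorphism $\bar\psi\colon \widetilde{\mathrm{Loc}}(\KqGr{k}{n})/(q^{-1/2}[1\dotsm k]^{-1}Z-1)\to\KqGr{k}{n}$ out of the information collected in Theorem~\ref{t:thetaisscalingpower}, Corollary~\ref{c:PluckersAreQCVs} and Corollary~\ref{c:formofrescaledQCV}, and then upgrade it to an isomorphism by a Gel$'$fand--Kirillov dimension count. Since Corollary~\ref{c:quotisQCA} already records that the source of $\bar\psi$ carries a graded quantum cluster algebra structure, this last assertion of the theorem is then automatic.

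First I would pin down the automorphism $\tau$ of Corollary~\ref{c:automtau}. Because $\sigma$ is conjugation by $Y$ on $\KqMat{k}{n-k}$ inside $\KqMat{k}{n-k}[Y^{\pm 1};\sigma]$, transporting through $\alpha$ shows that $\tau$ is conjugation by $\alpha(Y)=[1\dotsm k]$ on $\mathrm{Loc}(\KqGr{k}{n})$: it fixes $[1\dotsm k]$ (as in Lemma~\ref{l:centralelt}) and agrees with $\alpha\sigma\alpha^{-1}$ on $\alpha(\KqMat{k}{n-k})$, and these elements generate $\mathrm{Loc}(\KqGr{k}{n})$. Writing $c=q^{-1/2}[1\dotsm k]^{-1}Z$, it follows that $c$ is central and that $\mathrm{Loc}(\KqGr{k}{n})[Z^{\pm 1};\tau]$ is the ordinary central Laurent extension $\mathrm{Loc}(\KqGr{k}{n})[c^{\pm 1}]$, so setting $c=1$ (equivalently $Z\mapsto q^{1/2}[1\dotsm k]$, which restores the untwisted relation $[1\dotsm k]x[1\dotsm k]^{-1}=\tau(x)$) gives a surjection $\mathrm{Loc}(\KqGr{k}{n})[Z^{\pm 1};\tau]\to\mathrm{Loc}(\KqGr{k}{n})$. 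Composing the inclusion $\widetilde{\mathrm{Loc}}(\KqGr{k}{n})\hookrightarrow\mathrm{Loc}(\KqGr{k}{n})[Z^{\pm 1};\tau]$ with this surjection, Corollary~\ref{c:formofrescaledQCV} says each quantum cluster variable of $\widetilde{\mathrm{Loc}}(\KqGr{k}{n})$ has the form $q^{a}vZ^{b}$ with $v$ the corresponding quantum cluster variable of $\mathrm{Loc}(\KqGr{k}{n})$, and Theorem~\ref{t:thetaisscalingpower} forces $v\in\KqGr{k}{n}_{\theta}[1\dotsm k]^{-\theta}$ with $b=\theta=\theta(M(v))$; hence such a variable maps to $q^{a'}B$ with $B\in\KqGr{k}{n}$ homogeneous. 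Thus the composite lands in $\KqGr{k}{n}$ and kills $c-1$, yielding $\bar\psi$. Surjectivity follows from Corollary~\ref{c:PluckersAreQCVs}: the quantum cluster variable $[I][1\dotsm k]^{-1}$ of $\mathrm{Loc}(\KqGr{k}{n})$ has $\theta=1$ (its module in type $A$ has simple top), so its rescaled counterpart maps to a scalar multiple of $[I]$, and these generate $\KqGr{k}{n}$ by Definition~\ref{def:KqGrkn}.

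For injectivity I would argue by Gel$'$fand--Kirillov dimension. We know $\mathrm{GKdim}\,\KqGr{k}{n}=k(n-k)+1$, so surjectivity of $\bar\psi$ gives that the source has GK-dimension at least $k(n-k)+1$. Conversely, $\mathrm{Loc}(\KqGr{k}{n})=\KqGr{k}{n}[[1\dotsm k]^{-1}]$ is a localisation at powers of a single normal element and so has the same GK-dimension $k(n-k)+1$; its skew-Laurent extension $\mathrm{Loc}(\KqGr{k}{n})[c^{\pm 1}]$ then has GK-dimension $k(n-k)+2$, whence $\widetilde{\mathrm{Loc}}(\KqGr{k}{n})$, being a subalgebra that moreover contains the central units $c^{\pm 1}$ and (via the $\theta=1$ variables) the generators $[I]$ of $\KqGr{k}{n}$, also has GK-dimension $k(n-k)+2$; quotienting by the central non-zero-divisor $c-1$ lowers this by one. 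So both source and target of $\bar\psi$ are Noetherian domains of GK-dimension $k(n-k)+1$, and a surjection of domains of equal finite GK-dimension is an isomorphism (a nonzero kernel would force a strict drop). Therefore $\bar\psi$ is an isomorphism, and $\KqGr{k}{n}$ inherits the graded quantum cluster algebra structure from Corollary~\ref{c:quotisQCA}.

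The step I expect to be the main obstacle is making the Gel$'$fand--Kirillov bookkeeping airtight: that localisation at powers of $[1\dotsm k]$ preserves GK-dimension, that quotienting by the central non-zero-divisor $c-1$ lowers it by exactly one, and that an epimorphism of Noetherian domains of equal GK-dimension is injective, all rely on the good behaviour of GK-dimension for the iterated-skew-polynomial/localisation-type algebras at hand and must be invoked from the literature rather than re-derived. A cleaner but essentially equivalent alternative is to observe directly, using Theorem~\ref{t:thetaisscalingpower} and Corollary~\ref{c:PluckersAreQCVs} together, that $\widetilde{\mathrm{Loc}}(\KqGr{k}{n})$ coincides with the central Laurent extension $\KqGr{k}{n}[c^{\pm 1}]$, so that setting $c=1$ visibly returns $\KqGr{k}{n}$; establishing that equality, however, amounts to showing every rescaled quantum cluster variable lies in $\KqGr{k}{n}[c^{\pm 1}]$, which is exactly the content of Theorem~\ref{t:thetaisscalingpower} again.
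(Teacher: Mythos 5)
Your argument is a mirror image of the paper's. The paper defines a map $f\colon\KqGr{k}{n}\to\mathrm{Loc}(\KqGr{k}{n})[Z^{\pm 1};\tau]$ by $[I]\mapsto [I]z$ (where $z=q^{-1/2}[1\dotsm k]^{-1}Z$), checks that $f$ is a well-defined algebra map (using homogeneity of the quantum Pl\"{u}cker relations and centrality of $z$), shows via Theorem~\ref{t:thetaisscalingpower} that its image is exactly $\widetilde{\mathrm{Loc}}(\KqGr{k}{n})$, and then passes to the quotient to get a surjection $g\colon\KqGr{k}{n}\to\widetilde{\mathrm{Loc}}(\KqGr{k}{n})/(z-1)$; a GK-dimension count concludes. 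You instead produce the inverse map by restricting the evaluation $Z\mapsto q^{1/2}[1\dotsm k]$. Your identification of $\tau$ as conjugation by $[1\dotsm k]$ is correct, the resulting map is automatically well-defined (which is an advantage over the paper, where $f$'s well-definedness requires the relations argument), and your surjectivity argument via Corollary~\ref{c:PluckersAreQCVs} is fine. The ingredients and the GK-dimension endgame are the same as the paper's.

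There is, however, a real gap in your GK-dimension step. You invoke \textquotedblleft a surjection of domains of equal finite GK-dimension is an isomorphism\textquotedblright{} for $\bar\psi\colon\widetilde{\mathrm{Loc}}(\KqGr{k}{n})/(c-1)\to\KqGr{k}{n}$, which needs the \emph{source} to be a domain: the argument is that a nonzero two-sided ideal of the source would contain a regular element and hence strictly drop GK-dimension. But you have not justified that $\widetilde{\mathrm{Loc}}(\KqGr{k}{n})/(c-1)$ is a domain; a quotient of a noncommutative domain by a central non-unit need not be one. The paper's choice of direction sidesteps this, since its source $\KqGr{k}{n}$ is a domain by known facts. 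Your concluding \textquotedblleft alternative\textquotedblright\ observation --- that $\widetilde{\mathrm{Loc}}(\KqGr{k}{n})$ coincides with the central Laurent extension $\KqGr{k}{n}[c^{\pm 1}]$, established from Theorem~\ref{t:thetaisscalingpower} (for the inclusion $\subseteq$) together with Corollary~\ref{c:PluckersAreQCVs} and the fact that $c^{\pm 1}$ are coefficients (for $\supseteq$) --- would in fact close this gap, since then $\widetilde{\mathrm{Loc}}(\KqGr{k}{n})/(c-1)\iso\KqGr{k}{n}$ is visibly a domain; indeed that observation renders the GK-dimension argument unnecessary altogether. So either adopt that alternative, or reverse the direction of your map as the paper does. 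A secondary, smaller issue: you assert that localising $\KqGr{k}{n}$ at powers of the normal element $[1\dotsm k]$ preserves GK-dimension; this is true but nontrivial and needs a citation, whereas the paper computes $\mathrm{GKdim}\,\widetilde{\mathrm{Loc}}(\KqGr{k}{n})$ more directly by sandwiching it between the quantum affine space spanned by the initial cluster and the associated quantum torus furnished by the quantum Laurent phenomenon.
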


\begin{proof} We proceed in two steps.  First, we show the existence of a surjective homomorphism from $\KqGr{k}{n}$ to the quotient $\widetilde{\mathrm{Loc}}(\KqGr{k}{n})/(q^{-1/2}[1\dotsm k]^{-1}Z-1)$.  Then we show that the latter has the same Gel\cprime fand--Kirillov dimension as $\KqGr{k}{n}$, from which it follows that the two algebras are isomorphic.

Let us denote by $z$ the (central) element $q^{-1/2}[1\dotsm k]^{-1}Z \in \mathrm{Loc}(\KqGr{k}{n})[Z^{\pm 1};\tau]$.  Then let $f\colon \KqGr{k}{n} \to \mathrm{Loc}(\KqGr{k}{n})[Z^{\pm 1};\tau]$ be the linear map defined on the generating quantum Pl\"{u}cker coordinates $[I]$ of $\KqGr{k}{n}$ by $f([I])=[I]z$, extended to first to monomials multiplicatively and then extended linearly.  Notice that $f$ is a map whose codomain is the whole skew-Laurent extension defined in Proposition~\ref{p:rescaledLoctilde}.  Since $\KqGr{k}{n}$ is spanned as a vector space by monomials in the quantum Pl\"{u}cker coordinates and since the defining (generalised) quantum Pl\"{u}cker relations in $\KqGr{k}{n}$ are homogeneous (\cite[Remark~3.3]{Kolb}), the centrality of $z$ implies that this yields a well-defined algebra homomorphism.

Next we establish that the image of $f$ lies in $\widetilde{\mathrm{Loc}}(\KqGr{k}{n})$.  This follows from Theorem~\ref{t:thetaisscalingpower}: we defined the elements $\curly{L}_{(r,s)}$ to be the images under the dehomogenisation isomorphism $\alpha$ of the initial cluster variables for the quantum cluster algebra structure on $\KqMat{k}{n-k}$.  In particular, as noted after Theorem~\ref{t:thetaisscalingpower}, every quantum cluster variable of $\widetilde{\mathrm{Loc}}(\KqGr{k}{n})$ is equal to an element of $\KqGr{k}{n}$ multiplied by some power of $z$.  For this is exactly why the choices in Proposition~\ref{p:rescaledLoctilde} were made: Theorem~\ref{t:thetaisscalingpower} gives that every quantum cluster variable $v$ in $\mathrm{Loc}(\KqGr{k}{n})$ has a unique expression as an element of $\KqGr{k}{n}$ multiplied by $([1\dotsc k]^{-1})^{\theta(\alpha^{-1}(v))}$ and we applied the re-scaling construction of Theorem~\ref{t:rescaledQCA} using the data from $\theta$ (now ``fixed'' to be a genuine grading), re-scaling every quantum cluster variable precisely by the power of $Z$ needed to ensure that we obtain as quantum cluster variables elements of $\KqGr{k}{n}$ multiplied by powers of $z$.

Furthermore, since the quantum cluster variables by definition generate $\widetilde{\mathrm{Loc}}(\KqGr{k}{n})$, $f$ is a surjective homomorphism onto $\widetilde{\mathrm{Loc}}(\KqGr{k}{n})$.  Composing $f$ with the natural projection of $\widetilde{\mathrm{Loc}}(\KqGr{k}{n})$ onto the quotient by the ideal generated by $z-1$, we have a surjective homomorphism $g \colon \KqGr{k}{n} \to \widetilde{\mathrm{Loc}}(\KqGr{k}{n})/(z-1)$, as we wanted.

Next, we show the equality of the Gel\cprime fand--Kirillov dimensions of the domain and codomain of $g$.  The GK-dimension of $\KqGr{k}{n}$ is well-known to be $k(n-k)+1$ so we compute the GK-dimension of the quotient that is the codomain.  By construction, $\widetilde{\mathrm{Loc}}(\KqGr{k}{n})$ contains the quantum affine space $\mathbb{A}_{q}$ whose generators are precisely the elements in the initial seed of Proposition~\ref{p:rescaledLoctilde} except $z^{-1}$.  Since $\tilde{L}\setminus \{z^{-1} \}$ is a quasi-commuting set, being a quantum cluster, this is clear.

Now $\mathbb{A}_q$ has $k(n-k)+2$ generators, so $k(n-k)+2=\mathrm{GKdim}\ \mathbb{A}_q \leq \mathrm{GKdim} \widetilde{\mathrm{Loc}}(\KqGr{k}{n})$.  On the other hand, by the quantum Laurent phenomenon (\cite[Corollary~5.2]{BZ-QCA}), $\widetilde{\mathrm{Loc}}(\KqGr{k}{n})$ is contained in the quantum torus $\mathbb{T}_q$ associated to $\mathbb{A}_q$. So $\mathrm{GKdim}\ \widetilde{\mathrm{Loc}}(\KqGr{k}{n}) \leq \mathrm{GKdim}\ \mathbb{T}_q =k(n-k)+2$.

Hence, $\mathrm{GKdim}\ \widetilde{\mathrm{Loc}}(\KqGr{k}{n})=k(n-k)+2$ and so it follows from \cite[Proposition~3.15]{KrauseLenagan} that $\mathrm{GKdim}\ \widetilde{\mathrm{Loc}}(\KqGr{k}{n})/(z-1)\leq (k(n-k)+2)-1=k(n-k)+1$ since $z-1$ is certainly regular.

On the other hand,  $\widetilde{\mathrm{Loc}}(\KqGr{k}{n})/(z-1)$ contains (a copy of) the quantum affine space generated by the images of the indeterminates in the initial seed $\tilde{\curly{L}}$ of $\widetilde{\mathrm{Loc}}(\KqGr{k}{n})$ except $z$ and $z^{-1}$.  So $\mathrm{GKdim}\ \widetilde{\mathrm{Loc}}(\KqGr{k}{n})/(z-1) \geq k(n-k)+1$ and hence we in fact have equality.

Now since $\KqGr{k}{n}$ and $\widetilde{\mathrm{Loc}}(\KqGr{k}{n})$ are domains, it follows that these are isomorphic, as every epimorphism of domains of the same Gel\cprime fand--Kirillov dimension is an isomorphism (\cite[Proposition~3.15]{KrauseLenagan}, as previously).
\end{proof}

In Figure~\ref{fig:initialseedKqGr36} we show the initial data for the quantum cluster algebra structure thus obtained on $\KqGr{3}{6}$.  We note that due to the powers of $q$ that are present, this is not identical to the quantum cluster algebra structure obtained in the authors' earlier work (\cite{Gr2nSchubertQCA}).  However, to conclude, we may apply Proposition~\ref{p:rescaleQCAbypowersofq}, for we see that the power of $q$ appearing in the expression for the $(r,s)$ variable is exactly $q^{\frac{\hat{\underline{t}}_{(r,s)}+1}{2}}$.  The corresponding vector $\transpose{(\hat{\underline{t}}_{(0,0)}+1,\dotsc ,\hat{\underline{t}}_{(k,n-k)}+1)}$ is a grading for the relevant exchange matrix, since both $\hat{\underline{t}}$ and $(1,\dotsc ,1)$ are.  Then we may apply Proposition~\ref{p:rescaleQCAbypowersofq} with the negative of this grading, to obtain an isomorphic quantum cluster algebra without these powers of $q$.  In this way, we recover exactly the quantum cluster algebra structure obtained in \cite{Gr2nSchubertQCA}.  Figure~\ref{fig:finalQCAstrKqGr36} shows this final re-scaled quantum cluster algebra structure on $\KqGr{3}{6}$.

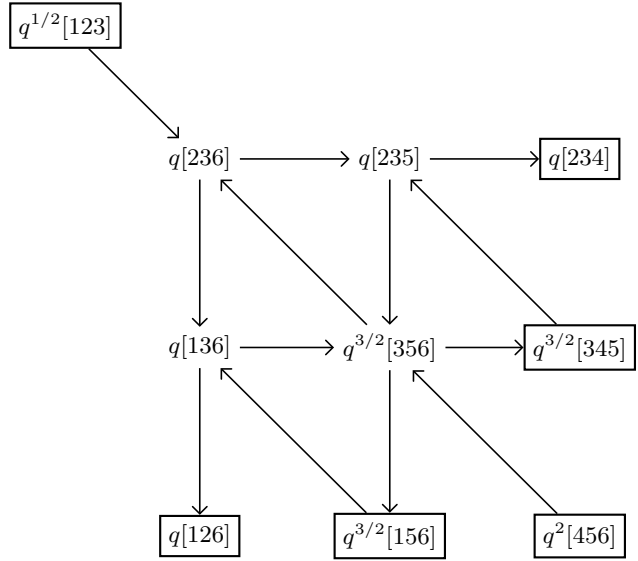
\begin{figure}[t]
\begin{center}
{\footnotesize
\scalebox{1}{\begin{tikzpicture}[node distance=2.5cm,on grid,>=angle 90]

\node (11) at (0,0) {$q[236]$}; 
\node (12) [right=of 11] {$q[235]$};
\node (13) [right=of 12,rectangle,draw=black,thick] {$q[234]$};

\node (21) [below=of 11] {$q[136]$}; 
\node (22) [right=of 21] {$q^{3/2}[356]$};
\node (23) [right=of 22,rectangle,draw=black,thick] {$q^{3/2}[345]$};

\node (31) [below=of 21,rectangle,draw=black,thick] {$q[126]$}; 
\node (32) [right=of 31,rectangle,draw=black,thick] {$q^{3/2}[156]$};
\node (33) [right=of 32,rectangle,draw=black,thick] {$q^{2}[456]$};

\node (00) [above left=of 11,rectangle,draw=black,thick] {$q^{1/2}[123]$};

\draw[semithick,->] (11) to (12);
\draw[semithick,->] (12) to (13);

\draw[semithick,->] (21) to (22);
\draw[semithick,->] (22) to (23);

\draw[semithick,->] (11) to (21);
\draw[semithick,->] (21) to (31);

\draw[semithick,->] (12) to (22);
\draw[semithick,->] (22) to (32);

\draw[semithick,->] (22) to (11);
\draw[semithick,->] (23) to (12);

\draw[semithick,->] (32) to (21);
\draw[semithick,->] (33) to (22);

\draw[semithick,->] (00) to (11);

\end{tikzpicture}}
}
\end{center}
\caption{\label{fig:initialseedKqGr36}Initial cluster for a quantum cluster algebra structure on $\KqGr{3}{6}$, prior to a final re-scaling.}
\end{figure}

\begin{figure}[t]
\begin{center}
{\footnotesize
\scalebox{1}{\begin{tikzpicture}[node distance=2.5cm,on grid,>=angle 90]

\node (11) at (0,0) {$[236]$}; 
\node (12) [right=of 11] {$[235]$};
\node (13) [right=of 12,rectangle,draw=black,thick] {$[234]$};

\node (21) [below=of 11] {$[136]$}; 
\node (22) [right=of 21] {$[356]$};
\node (23) [right=of 22,rectangle,draw=black,thick] {$[345]$};

\node (31) [below=of 21,rectangle,draw=black,thick] {$[126]$}; 
\node (32) [right=of 31,rectangle,draw=black,thick] {$[156]$};
\node (33) [right=of 32,rectangle,draw=black,thick] {$[456]$};

\node (00) [above left=of 11,rectangle,draw=black,thick] {$[123]$};

\draw[semithick,->] (11) to (12);
\draw[semithick,->] (12) to (13);

\draw[semithick,->] (21) to (22);
\draw[semithick,->] (22) to (23);

\draw[semithick,->] (11) to (21);
\draw[semithick,->] (21) to (31);

\draw[semithick,->] (12) to (22);
\draw[semithick,->] (22) to (32);

\draw[semithick,->] (22) to (11);
\draw[semithick,->] (23) to (12);

\draw[semithick,->] (32) to (21);
\draw[semithick,->] (33) to (22);

\draw[semithick,->] (00) to (11);

\end{tikzpicture}}
}
\end{center}
\caption{\label{fig:finalQCAstrKqGr36}Initial cluster for a quantum cluster algebra structure on $\KqGr{3}{6}$, following a final re-scaling.}
\end{figure}
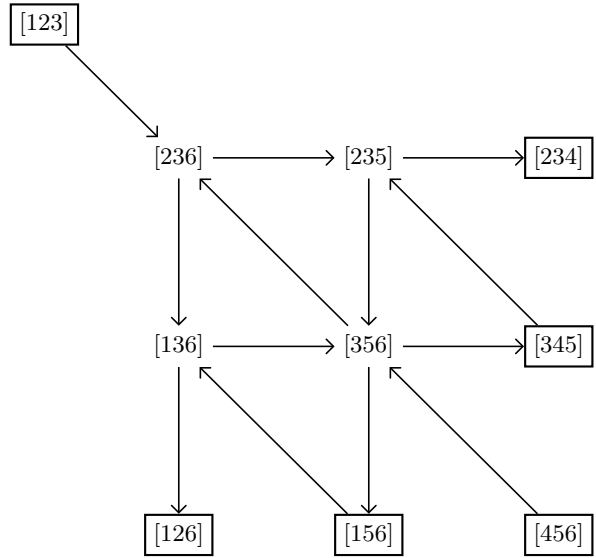

\begin{remark} The preceding Corollary establishes that $\KqGr{k}{n}$ is a \emph{graded} quantum cluster algebra and we have seen that this grading is the standard grading on $\KqGr{k}{n}$, with the quantum Pl\"{u}cker coordinates in degree one.  Then in particular it follows from the general theory of graded quantum cluster algebras that every quantum cluster variable is homogeneous with respect to this graded, a phenomenon observed in the authors' earlier work (\cite{Gr2nSchubertQCA}) (and only experimentally for $\KqGr{3}{n}$, $n=6,7,8$).
\end{remark}

\begin{remark} We note that we have worked throughout over the field $\mathbb{Q}(q)$, that is, with $q$ transcendental over $\mathbb{Q}$.  This assumption is necessary because this is the context in which the main theorem of Gei\ss--Leclerc--Schr\"{o}er is proved (\cite[Theorem~12.3]{GLS-QuantumPFV}), for a number of technical reasons.  Since that theorem provides the starting point for our lifting, namely the quantum cluster algebra structure on quantum matrices, we must make this assumption too.  However our methods here only use that $q$ is not a root of unity, so that if the aforementioned result is extended, our conclusion will also follow immediately without need for modification. 

So, in line with Conjecture~12.7 of \cite{GLS-QuantumPFV}, we conjecture that the above quantum cluster algebra structure on the quantum Grassmannian can be realised on an integral form, i.e.\ over $\mathbb{Q}[q,q^{-1}]$.  Indeed the explicit descriptions of the quantum cluster variables in the authors' earlier work suggest that this structure may even be defined over $\integ[q,q^{-1}]$.  However, many of the constructions we have applied, notably Theorem~\ref{t:rescaledQCA}, involve powers of $q^{1/2}$ and it appears to be a delicate matter to see that $q^{1/2}$ does not enter into the final quantum cluster algebra structure.
\end{remark}

\begin{remark} As noted in the introduction, we expect that the methods presented here---or generalisations of them---can be used to establish the existence of (graded) quantum cluster algebra structures on the quantized coordinate rings of arbitrary partial flag varieties.  The relationships between the latter and their localisations that give the quantized coordinate rings of the big cells of the corresponding partial flag variety are well-understood and dehomogenisation isomorphisms such as that used here are known.  Modifications of the constructions here may be necessary, however.  For example, multi-gradings may be needed where the coordinate rings of the big cells involve localisation at several elements.
\end{remark}

\clearpage

\pagebreak

\section{Appendix}\label{appendix}

Here we gather the aforementioned details of the calculations used in the proof of Theorem~\ref{t:rescaledQCA}.  Notation is as in that proof.

\begin{lemma} $\tilde{X}_{i}^{a_{i}}=q^{\frac{1}{2}a_{i}^{2}t_{i}u_{i}}X_{i}^{a_{i}}z^{a_{i}u_{i}}$
\end{lemma}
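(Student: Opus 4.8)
The claim is that $\tilde{X}_{i}^{a_{i}}=q^{\frac{1}{2}a_{i}^{2}t_{i}u_{i}}X_{i}^{a_{i}}z^{a_{i}u_{i}}$, where $\tilde{X}_{i}=q^{\frac{t_{i}u_{i}}{2}}X_{i}z^{u_{i}}$. I would proceed by a direct induction on $a_i$ (treating non-negative $a_i$ first, then noting the negative case, which is all that is needed since in an exchange monomial only $a_k=-1$ is negative). The key structural input is the quasi-commutation relation between $z$ and the $X_i$ coming from the skew-Laurent extension $\curly{A}_{q}[z^{\pm 1};\tau]$ with $\tau(X_i)=q^{t_i}X_i$, namely $z X_i = q^{t_i} X_i z$, and hence $z^{u} X_i = q^{u t_i} X_i z^{u}$ and more generally $z^{u} X_i^{a} = q^{a u t_i} X_i^{a} z^{u}$ for all integers $a,u$ (this last identity itself being a trivial induction, or simply the observation that $\tau$-conjugation scales $X_i$ by a power of $q$).

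\textbf{Base and inductive step.} For $a_i=1$ the statement is the definition of $\tilde{X}_i$ (with $\frac{1}{2}a_i^2 t_i u_i = \frac{1}{2}t_iu_i$). Assume $\tilde{X}_i^{a}=q^{\frac{1}{2}a^{2}t_{i}u_{i}}X_{i}^{a}z^{a u_{i}}$. Then
\[
\tilde{X}_i^{a+1}=\tilde{X}_i^{a}\,\tilde{X}_i=q^{\frac{1}{2}a^{2}t_{i}u_{i}}X_{i}^{a}z^{a u_{i}}\cdot q^{\frac{t_{i}u_{i}}{2}}X_{i}z^{u_{i}}=q^{\frac{1}{2}a^{2}t_{i}u_{i}+\frac{t_{i}u_{i}}{2}}X_{i}^{a}\bigl(z^{a u_{i}}X_{i}\bigr)z^{u_{i}}.
\]
Now $z^{a u_i}X_i = q^{a u_i t_i}X_i z^{a u_i}$, so the right-hand side equals
\[
q^{\frac{1}{2}a^{2}t_{i}u_{i}+\frac{t_{i}u_{i}}{2}+a u_{i}t_{i}}X_{i}^{a+1}z^{(a+1)u_{i}}=q^{\frac{1}{2}(a+1)^{2}t_{i}u_{i}}X_{i}^{a+1}z^{(a+1)u_{i}},
\]
since $\tfrac12 a^2 + \tfrac12 + a = \tfrac12(a+1)^2$. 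This closes the induction for $a_i\geq 0$. For $a_i$ negative (only $a_i=-1$ occurs in practice), one checks $\tilde{X}_i^{-1}=(q^{\frac{t_iu_i}{2}}X_iz^{u_i})^{-1}=z^{-u_i}X_i^{-1}q^{-\frac{t_iu_i}{2}}$ and commutes $z^{-u_i}$ past $X_i^{-1}$ using $z^{-u_i}X_i^{-1}=q^{u_it_i}X_i^{-1}z^{-u_i}$, giving $q^{u_it_i-\frac{t_iu_i}{2}}X_i^{-1}z^{-u_i}=q^{\frac{1}{2}t_iu_i}X_i^{-1}z^{-u_i}$, which is the asserted formula with $a_i=-1$ (as $\frac12(-1)^2 t_iu_i=\frac12 t_iu_i$); the general negative case follows by the same induction run downwards.

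\textbf{Main obstacle.} There is no real obstacle here: the lemma is entirely bookkeeping of powers of $q$, and the only point requiring any care is keeping the exponent arithmetic straight (the cross-term $a u_i t_i$ from commuting $z^{au_i}$ past $X_i$ is exactly what upgrades $\tfrac12 a^2+\tfrac12$ to $\tfrac12(a+1)^2$). One should also record explicitly that $q$ is not a root of unity is not needed for this particular identity — it is the honest skew-Laurent relation $zX_i=q^{t_i}X_iz$ that is used — whereas it will be needed in the companion computation of $\tilde{M}$. The subsequent lemmas in the Appendix presumably assemble these per-variable identities, together with the $\tilde{l}_{ij}=l_{ij}-(\underline{t}\wedge\underline{u})_{ij}$ relation, into the formula for $\tilde{M}(a_1,\dots,a_r)$ quoted in the proof of Theorem~\ref{t:rescaledQCA}.
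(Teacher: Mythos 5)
Your proof is correct and takes essentially the same approach as the paper: both rely on the skew-Laurent relation $zX_i = q^{t_i}X_iz$ to move powers of $z$ past $X_i$ and both reduce to the identity $\tfrac{a_i}{2} + \tfrac{a_i(a_i-1)}{2} = \tfrac{a_i^2}{2}$. The paper writes this as a direct accumulation of exponents $\sum_{j=1}^{a_i-1} j$ rather than wrapping it in an explicit induction, but the content is identical, including the brief dispatch of the negative case.
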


\begin{proof} If $a_{i}\geq 0$ then
\begin{align*}
\tilde{X}_{i}^{a_{i}} & = \left(q^{\frac{t_{i}u_{i}}{2}}X_{i}z^{u_{i}}\right)^{a_{i}} \\
 & = q^{\frac{a_{i}t_{i}u_{i}}{2}}\left( X_{i}z^{u_{i}}\right)^{a_{i}} \\
 & = q^{\frac{a_{i}t_{i}u_{i}}{2}}q^{\left(\sum_{j=1}^{a_{i}-1} j\right)t_{i}u_{i}}X_{i}^{a_{i}}z^{a_{i}u_{i}} \\
 & = q^{\frac{a_{i}t_{i}u_{i}}{2}}q^{\left(\frac{a_{i}(a_{i}-1)}{2}\right)t_{i}u_{i}}X_{i}^{a_{i}}z^{a_{i}u_{i}} \\
 & = q^{\frac{1}{2}a_{i}^{2}t_{i}u_{i}}X_{i}^{a_{i}}z^{a_{i}u_{i}} 
\end{align*}
since $z^{u_{i}}X_{i}=q^{t_{i}u_{i}}X_{i}z^{u_{i}}$.  The sum $\left(\sum_{j=1}^{a_{i}-1} j\right)t_{i}u_{i}$ arises from moving $z^{u_{i}}$'s to the right the required number of times to rearrange the product as shown.  It is straightforward to check that the claim is also correct for $a_{i}\leq 0$ by similar means.
\end{proof}

Set $\beta_{i}=q^{\frac{1}{2}a_{i}^{2}t_{i}u_{i}}$.

\begin{lemma} $z^{a_{i}u_{i}}\left(\prod_{j=i+1}^{r} X_{j}^{a_{j}}z^{a_{j}u_{j}}\right)=\left(\prod_{j=i+1}^{r} q^{a_{i}a_{j}t_{j}u_{i}}\right)\left(\prod_{j=i+1}^{r} X_{j}^{a_{j}}z^{a_{j}u_{j}}\right)z^{a_{i}u_{i}}$
\end{lemma}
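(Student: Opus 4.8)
The plan is to push the factor $z^{a_iu_i}$ rightwards past the product $\prod_{j=i+1}^{r}X_j^{a_j}z^{a_ju_j}$ one tensor-factor at a time, collecting a power of $q$ at each step. The only structural input needed is the basic commutation relation in the skew-Laurent extension: since $\tau(X_j)=q^{t_j}X_j$, the defining relation $zX_j=\tau(X_j)z$ reads $zX_j=q^{t_j}X_jz$. Iterating this yields $z^{c}X_j^{d}=q^{cdt_j}X_j^{d}z^{c}$ for all $c,d\in\integ$; in particular $z^{a_iu_i}X_j^{a_j}=q^{a_iu_ia_jt_j}X_j^{a_j}z^{a_iu_i}$. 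I would also record the obvious fact that distinct powers of $z$ commute, so $z^{a_iu_i}z^{a_ju_j}=z^{a_ju_j}z^{a_iu_i}$.

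First I would reduce to a finite induction on the number of factors, i.e.\ on $r-i$. For $r=i$ the product on the right is empty and the identity is trivial. For the inductive step, write $\prod_{j=i+1}^{r}X_j^{a_j}z^{a_ju_j}=\bigl(\prod_{j=i+1}^{r-1}X_j^{a_j}z^{a_ju_j}\bigr)X_r^{a_r}z^{a_ru_r}$, apply the inductive hypothesis to move $z^{a_iu_i}$ past the first $r-1-i$ factors (contributing $\prod_{j=i+1}^{r-1}q^{a_ia_jt_ju_i}$), then use $z^{a_iu_i}X_r^{a_r}=q^{a_ia_rt_ru_i}X_r^{a_r}z^{a_iu_i}$ and the commutativity of $z^{a_iu_i}$ with $z^{a_ru_r}$ to move it past the last factor, contributing the final $q^{a_ia_rt_ru_i}$. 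Multiplying the collected powers of $q$ gives $\prod_{j=i+1}^{r}q^{a_ia_jt_ju_i}$, exactly as claimed.

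The computation presents no genuine obstacle; the only point demanding a little care is the treatment of negative integer exponents $a_j$ (and of a possibly negative $u_i$), where one must check that $z^{c}X_j^{d}=q^{cdt_j}X_j^{d}z^{c}$ still holds. This is where the standing hypothesis that $q$ is not a root of unity is used — so that $z$ and the $X_j$ are genuinely invertible inside the ambient quantum torus with the expected relations — exactly as flagged in the proof of Lemma~\ref{l:Lminuswedge}, and it is precisely these identities that feed into the derivation of the formula for $\tilde{M}(a_1,\dotsc,a_r)$ in the proof of Theorem~\ref{t:rescaledQCA}.
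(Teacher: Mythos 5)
Your proof is correct and follows the same route as the paper's one-line argument: push $z^{a_iu_i}$ past each factor using $zX_j=q^{t_j}X_jz$ (iterated to arbitrary integer exponents) together with the fact that powers of $z$ commute with one another; you have merely spelled out the induction. One small inaccuracy in your closing remark: the extension of $zX_j=q^{t_j}X_jz$ to negative exponents, $z^{c}X_j^{d}=q^{cdt_j}X_j^{d}z^{c}$, is a formal consequence of invertibility in the skew-Laurent ring and does not require $q$ to be a non-root of unity; that hypothesis enters in the proof of Lemma~\ref{l:Lminuswedge} for a different reason, namely to equate exponents of $q$ when reading off $\tilde{l}_{ij}$ from the quasi-commutation identity $\tilde{X}_i\tilde{X}_j=q^{\tilde{l}_{ij}}\tilde{X}_j\tilde{X}_i$.
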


\begin{proof} This follows from the defining quasi-commutation relation $zX_{j}=q^{t_{j}}X_{j}z$ and noting that $z$ commutes with itself.
\end{proof}

Set $\alpha_{i}=\prod_{j=i+1}^{r} q^{a_{i}a_{j}t_{j}u_{i}}=\prod_{i<j} q^{a_{i}a_{j}t_{j}u_{i}}$ so that
\[ z^{u_{i}a_{i}}\prod_{i<j} X_{j}^{a_{j}}z^{a_{j}u_{j}}=\alpha_{i}\left(\prod_{i<j} X_{j}^{a_{j}}z^{a_{j}u_{j}} \right)z^{u_{i}a_{i}}, \]
 by the preceding lemma.

\begin{lemma} $\prod_{i=1}^{r} \tilde{X}_{i}^{a_{i}} = \left(\prod_{i=1}^{r-1} \alpha_{i}\right)\left( \prod_{i=1}^{r} \beta_{i} \right)X_{1}^{a_{1}}\dotsm X_{r}^{a_{r}}z^{\left(\sum_{i=1}^{r} a_{i}u_{i}\right)}$
\end{lemma}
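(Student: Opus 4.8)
The plan is to prove the identity by induction on $r$, peeling off one factor at a time using the two preceding lemmas of the Appendix. For the base case $r=1$, the first lemma gives $\tilde{X}_{1}^{a_{1}}=\beta_{1}X_{1}^{a_{1}}z^{a_{1}u_{1}}$, while the empty product $\prod_{i=1}^{0}\alpha_{i}$ equals $1$, so the claimed formula holds.

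For the inductive step, I would first apply the first lemma to each factor, rewriting $\prod_{i=1}^{r}\tilde{X}_{i}^{a_{i}}$ as $\prod_{i=1}^{r}(\beta_{i}X_{i}^{a_{i}}z^{a_{i}u_{i}})$. Each $\beta_{i}$ is a power of $q$ and hence central, so these scalars can be collected at the front, leaving $(\prod_{i=1}^{r}\beta_{i})\,X_{1}^{a_{1}}z^{a_{1}u_{1}}(\prod_{j=2}^{r}X_{j}^{a_{j}}z^{a_{j}u_{j}})$. Next, the second lemma (taken with $i=1$) lets me move $z^{a_{1}u_{1}}$ to the right past $\prod_{j=2}^{r}X_{j}^{a_{j}}z^{a_{j}u_{j}}$, at the cost of the scalar $\alpha_{1}=\prod_{1<j}q^{a_{1}a_{j}t_{j}u_{1}}$.

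Then I would invoke the inductive hypothesis on the sub-product over the indices $2,\ldots,r$; crucially, for $2\leq i\leq r$ the quantities $\alpha_{i}$ and $\beta_{i}$ depend only on data indexed by $j\geq i$, so they are unchanged by restricting to this range, and the hypothesis gives $\prod_{j=2}^{r}X_{j}^{a_{j}}z^{a_{j}u_{j}}=(\prod_{i=2}^{r-1}\alpha_{i})\,X_{2}^{a_{2}}\dotsm X_{r}^{a_{r}}z^{\sum_{i=2}^{r}a_{i}u_{i}}$. Substituting this back, pulling the central scalars to the front, and using that $z$ commutes with itself to combine $z^{\sum_{i=2}^{r}a_{i}u_{i}}z^{a_{1}u_{1}}=z^{\sum_{i=1}^{r}a_{i}u_{i}}$, I obtain $(\prod_{i=1}^{r-1}\alpha_{i})(\prod_{i=1}^{r}\beta_{i})\,X_{1}^{a_{1}}\dotsm X_{r}^{a_{r}}z^{\sum_{i=1}^{r}a_{i}u_{i}}$, as required. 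The only delicate point --- and it is purely bookkeeping --- is tracking the scalar factors and the powers of $z$, in particular verifying that the index ranges for the $\alpha_{i}$ agree between the full product and the sub-product; there is no conceptual obstacle here, since all the genuinely non-commutative manipulation has been isolated into the two preceding lemmas.
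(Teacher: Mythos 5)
Your proof is correct and takes essentially the same approach as the paper: apply the first lemma to each factor, pull out the central scalars $\beta_i$, then repeatedly use the second lemma to move the $z$-powers to the right, collecting the $\alpha_i$. The paper compresses this into "by using the above lemmas repeatedly," and your induction simply makes that step precise; your observation that $\alpha_i$ and $\beta_i$ for $i\geq 2$ are unaffected by restricting to the sub-product is the correct bookkeeping point to check.
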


\begin{proof}
\begin{align*}
\prod_{i=1}^{r} \tilde{X}_{i}^{a_{i}} & = \prod_{i=1}^{r} \beta_{i}X_{i}^{a_{i}}z^{a_{i}u_{i}} \\
 & = \left(\prod_{i=1}^{r} \beta_{i}\right)\prod_{i=1}^{r} X_{i}^{a_{i}}z^{a_{i}u_{i}} \\
 & = \left(\prod_{i=1}^{r-1} \alpha_{i}\right)\left( \prod_{i=1}^{r} \beta_{i} \right)X_{1}^{a_{1}}\dotsm X_{r}^{a_{r}}z^{\left(\sum_{i=1}^{r} a_{i}u_{i}\right)}
\end{align*}
by using the above lemmas repeatedly.
\end{proof}

\pagebreak
\begin{lemma} {\ }
\begin{enumerate}[label=(\alph*)]
\item $\prod_{i=1}^{r-1} \alpha_{i}=q^{\sum_{i<j} a_{i}a_{j}t_{j}u_{i}}$
\item $\prod_{i=1}^{r} \beta_{i} = q^{\frac{1}{2}\sum_{i=1}^{r} a_{i}^{2}t_{i}u_{i}}$
\end{enumerate}
\end{lemma}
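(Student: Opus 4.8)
The plan is to treat both identities as straightforward exponent-collecting computations: every factor appearing in the products $\prod_{i=1}^{r-1}\alpha_{i}$ and $\prod_{i=1}^{r}\beta_{i}$ is a power of the single scalar $q\in\mathbb{K}$, so each product is obtained simply by summing the exponents, the only subtlety being the ranges of summation and the convention that an empty product equals $1$.

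For part (b) I would write out the definition $\beta_{i}=q^{\frac{1}{2}a_{i}^{2}t_{i}u_{i}}$ and multiply:
\[ \prod_{i=1}^{r} \beta_{i} = \prod_{i=1}^{r} q^{\frac{1}{2}a_{i}^{2}t_{i}u_{i}} = q^{\sum_{i=1}^{r}\frac{1}{2}a_{i}^{2}t_{i}u_{i}} = q^{\frac{1}{2}\sum_{i=1}^{r} a_{i}^{2}t_{i}u_{i}}, \]
which is the assertion. For part (a), recall $\alpha_{i}=\prod_{j=i+1}^{r} q^{a_{i}a_{j}t_{j}u_{i}}$; in particular $\alpha_{r}$ is an empty product, hence $\alpha_{r}=1$ and $\prod_{i=1}^{r-1}\alpha_{i}=\prod_{i=1}^{r}\alpha_{i}$. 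Then
\[ \prod_{i=1}^{r-1} \alpha_{i} = \prod_{i=1}^{r-1}\prod_{j=i+1}^{r} q^{a_{i}a_{j}t_{j}u_{i}} = q^{\sum_{i=1}^{r-1}\sum_{j=i+1}^{r} a_{i}a_{j}t_{j}u_{i}} = q^{\sum_{i<j} a_{i}a_{j}t_{j}u_{i}}, \]
the double sum over $1\leq i<j\leq r$ being exactly what the notation $\sum_{i<j}$ denotes in the statement.

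There is essentially no obstacle here; the one point worth stating explicitly is that these manipulations are legitimate because $q$ is not a root of unity (indeed $q$ is transcendental in our setting), so powers of $q$ have well-defined (half-)integer exponents that genuinely add. Once (a) and (b) are in hand, substituting them into the conclusion of the preceding lemma and combining with $\tilde{L}=L-\underline{t}\wedge\underline{u}$ yields the formula $\tilde{M}(\underline{a})=q^{\frac{1}{2}\sum_{i,j}a_{i}a_{j}t_{i}u_{j}}M(\underline{a})z^{\underline{a}\cdot\underline{u}}$ used in the proof of Theorem~\ref{t:rescaledQCA}, via the elementary identity $\sum_{i,j}a_{i}a_{j}t_{i}u_{j}=\sum_{i}a_{i}^{2}t_{i}u_{i}+\sum_{i<j}a_{i}a_{j}(t_{i}u_{j}+t_{j}u_{i})$; but that last bookkeeping belongs to the surrounding argument rather than to this lemma.
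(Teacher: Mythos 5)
Your proof is correct and is exactly what the paper means by "these equalities are immediate from the definitions of $\alpha_i$ and $\beta_i$ respectively" --- you have simply spelled out the exponent-collecting. The remark about $q$ not being a root of unity is harmless but unnecessary here, since these are formal identities between monomials in $q^{1/2}$ that hold in any event.
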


\begin{proof} These equalities are immediate from the definitions of $\alpha_{i}$ and $\beta_{i}$ respectively.
\end{proof}

\begin{proposition} $\tilde{M}(a_{1},\dotsc ,a_{r})=q^{\frac{1}{2}\sum_{i=1}^{r} \sum_{j=1}^{r} a_{i}a_{j}t_{i}u_{j}}M(a_{1},\dotsc ,a_{r})z^{\left(\sum_{i=1}^{r} a_{i}u_{i}\right)}$
\end{proposition}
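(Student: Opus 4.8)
The plan is to derive the identity by pure bookkeeping, stringing together the four intermediate lemmas just proved in this appendix. First I would write out the two exchange monomials from their definitions,
\[ \tilde{M}(a_{1},\dotsc ,a_{r}) = q^{\frac{1}{2}\sum_{i<j} a_{i}a_{j}\tilde{l}_{ji}}\,\tilde{X}_{1}^{a_{1}}\dotsm \tilde{X}_{r}^{a_{r}},\qquad M(a_{1},\dotsc ,a_{r}) = q^{\frac{1}{2}\sum_{i<j} a_{i}a_{j}l_{ji}}\,X_{1}^{a_{1}}\dotsm X_{r}^{a_{r}}, \]
and substitute the formula $\tilde{l}_{ji}=l_{ji}-t_{j}u_{i}+t_{i}u_{j}$ coming from Lemma~\ref{l:Lminuswedge}. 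This separates the $q$-prefactor of $\tilde{M}$ as the prefactor of $M$ multiplied by $q^{\frac{1}{2}\sum_{i<j}(a_{i}a_{j}t_{i}u_{j}-a_{i}a_{j}t_{j}u_{i})}$.

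Next I would replace the product $\tilde{X}_{1}^{a_{1}}\dotsm \tilde{X}_{r}^{a_{r}}$ using the lemma above that expands it as $\left(\prod_{i=1}^{r-1}\alpha_{i}\right)\left(\prod_{i=1}^{r}\beta_{i}\right)X_{1}^{a_{1}}\dotsm X_{r}^{a_{r}}z^{\sum_{i} a_{i}u_{i}}$, followed by the closed forms $\prod_{i=1}^{r-1}\alpha_{i}=q^{\sum_{i<j} a_{i}a_{j}t_{j}u_{i}}$ and $\prod_{i=1}^{r}\beta_{i}=q^{\frac{1}{2}\sum_{i} a_{i}^{2}t_{i}u_{i}}$. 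At this stage the factor $q^{\frac{1}{2}\sum_{i<j} a_{i}a_{j}l_{ji}}X_{1}^{a_{1}}\dotsm X_{r}^{a_{r}}$ reassembles into $M(\underline{a})$, the trailing $z^{\sum_{i} a_{i}u_{i}}$ is exactly the power predicted in the statement, and one is left with $\tilde{M}(\underline{a})=q^{E}M(\underline{a})z^{\sum_{i}a_{i}u_{i}}$ where
\[ E=\frac{1}{2}\sum_{i<j} a_{i}a_{j}t_{i}u_{j}-\frac{1}{2}\sum_{i<j} a_{i}a_{j}t_{j}u_{i}+\sum_{i<j} a_{i}a_{j}t_{j}u_{i}+\frac{1}{2}\sum_{i} a_{i}^{2}t_{i}u_{i}. \]

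The final step is to simplify $E$ to $\frac{1}{2}\sum_{i,j}a_{i}a_{j}t_{i}u_{j}$. The second and third terms combine to $\frac{1}{2}\sum_{i<j} a_{i}a_{j}t_{j}u_{i}$; relabelling $i\leftrightarrow j$ identifies that term with $\frac{1}{2}\sum_{i>j} a_{i}a_{j}t_{i}u_{j}$, so together with the first term it becomes $\frac{1}{2}\sum_{i\neq j} a_{i}a_{j}t_{i}u_{j}$, and adjoining the diagonal contribution $\frac{1}{2}\sum_{i} a_{i}^{2}t_{i}u_{i}$ completes it to the full double sum $\frac{1}{2}\sum_{i=1}^{r}\sum_{j=1}^{r} a_{i}a_{j}t_{i}u_{j}$, as claimed. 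There is no genuine obstacle in this argument; the only point demanding care is the sign convention in $\underline{t}\wedge\underline{u}$ together with the direction of the relabelling, so that the two off-diagonal cross-terms build up to the symmetric double sum rather than cancelling or doubling — precisely the place where the antisymmetry of $\tilde{L}$ and the upper-triangular shape of the $\alpha_{i}$ have to conspire correctly.
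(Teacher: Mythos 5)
Your proposal is correct and reproduces the paper's own computation step for step: substitute $\tilde{l}_{ji}=l_{ji}+t_{i}u_{j}-t_{j}u_{i}$ from Lemma~\ref{l:Lminuswedge}, expand $\tilde{X}_{1}^{a_{1}}\dotsm\tilde{X}_{r}^{a_{r}}$ via the appendix lemmas to pull out $\prod\alpha_{i}$, $\prod\beta_{i}$ and the trailing $z$-power, reassemble $M(\underline{a})$, and then merge the remaining exponents. Your final bookkeeping (combining the two $\sum_{i<j}t_{j}u_{i}$ terms to $\tfrac12\sum_{i<j}$, relabelling $i\leftrightarrow j$, and adjoining the diagonal) is exactly the simplification the paper compresses into its last two display lines, so this is the same argument with no genuine difference in route.
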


\begin{proof}
\begin{align*}
\tilde{M}(a_{1},\dotsc ,a_{r}) & = q^{\frac{1}{2}\sum_{i<j} a_{i}a_{j}\tilde{l}_{ji}}\left(\prod_{i=1}^{r} \tilde{X}_{i}^{a_{i}} \right) \\
 & = q^{\frac{1}{2}\sum_{i<j} a_{i}a_{j}(l_{ji}+t_{i}u_{j}-t_{j}u_{i})}\left(\prod_{i=1}^{r} \tilde{X}_{i}^{a_{i}} \right) \\
 & = q^{\frac{1}{2}\sum_{i<j} a_{i}a_{j}(l_{ji}+t_{i}u_{j}-t_{j}u_{i})}\left(\prod_{i=1}^{r-1} \alpha_{i}\right)\left( \prod_{i=1}^{r} \beta_{i} \right)X_{1}^{a_{1}}\dotsm X_{r}^{a_{r}}z^{\left(\sum_{i=1}^{r} a_{i}u_{i}\right)} \\
 & = q^{\frac{1}{2}\sum_{i<j} a_{i}a_{j}(l_{ji}+t_{i}u_{j}-t_{j}u_{i})}q^{\sum_{i<j} a_{i}a_{j}t_{j}u_{i}}q^{\frac{1}{2}\sum_{i=1}^{r} a_{i}^{2}t_{i}u_{i}}X_{1}^{a_{1}}\dotsm X_{r}^{a_{r}}z^{\left(\sum_{i=1}^{r} a_{i}u_{i}\right)} \\
 & = q^{\frac{1}{2}\sum_{i<j} a_{i}a_{j}(l_{ji}+t_{i}u_{j}-t_{j}u_{i})}q^{\sum_{i<j} a_{i}a_{j}t_{j}u_{i}}q^{\frac{1}{2}\sum_{i=1}^{r} a_{i}^{2}t_{i}u_{i}}q^{-\frac{1}{2}\sum_{i<j} a_{i}a_{j}l_{ji}}M(a_{1},\dotsc ,a_{r})z^{\left(\sum_{i=1}^{r} a_{i}u_{i}\right)} \\
 & = q^{\frac{1}{2}\left((\sum_{i<j} a_{i}a_{j}(t_{i}u_{j}+t_{j}u_{i}))+(\sum_{i=1}^{r} a_{i}^{2}t_{i}u_{i})\right)}M(a_{1},\dotsc ,a_{r})z^{\left(\sum_{i=1}^{r} a_{i}u_{i}\right)} \\
 & = q^{\frac{1}{2}\sum_{i=1}^{r} \sum_{j=1}^{r} a_{i}a_{j}t_{i}u_{j}}M(a_{1},\dotsc ,a_{r})z^{\left(\sum_{i=1}^{r} a_{i}u_{i}\right)}
\end{align*}
\end{proof}

\noindent This is the equality as claimed in the proof of Theorem~\ref{t:rescaledQCA}.

\small

\bibliographystyle{amsplain}
\bibliography{biblio}\label{references}

\normalsize

\end{document}